\documentclass[11pt]{article}

\usepackage{amssymb,amsmath}
\usepackage{amsthm}
\usepackage{thmtools} 

\usepackage{cite}
\usepackage{graphicx}

\usepackage{enumerate}
\usepackage{algorithm,algpseudocode}

\usepackage[colorlinks=true,allcolors=blue]{hyperref}

\usepackage{caption}
\captionsetup[algorithm]{format=hang,singlelinecheck=false,labelsep=colon}

\newcommand{\ds}{\displaystyle}
\newcommand{\ep}{\varepsilon}
\newcommand{\RR}{\mathbb{R}}
\newcommand{\lev}{\operatorname{lev}}
\newcommand{\prox}{\operatorname{prox}}

\newcommand{\Argmin}{\operatornamewithlimits{Argmin}}
\newcommand{\argmin}{\operatornamewithlimits{argmin}}

\newcommand{\dist}{\operatorname{dist}}
\newcommand{\E}{\mathbb{E}}

\newcommand{\norm}[1]{\left\|#1\right\|}
\newcommand{\innprod}[2]{\left\langle#1,#2\right\rangle}

\newcommand{\APGIter}{\operatorname{\tt APGIter}}
\newcommand{\PGIter}{\operatorname{\tt PGIter}}
\newcommand{\AdaAPG}{\operatorname{\tt AdaAPG}}
\newcommand{\rAdaAPG}{\operatorname{\tt rAdaAPG}}

\newcommand{\deltaf}{\delta_{\rm F}}
\newcommand{\deltag}{\delta_{\rm G}}



\declaretheorem[numberwithin=section]{theorem}
\declaretheorem[numberlike=theorem]{proposition}
\declaretheorem[numberlike=theorem]{corollary}
\declaretheorem[numberlike=theorem]{lemma}
\declaretheorem[style=definition,numberlike=theorem]{definition}
\declaretheorem[style=remark,qed=\qedsymbol,numberlike=theorem]{remark}


\evensidemargin -0.23truein		
\oddsidemargin -0.2truein		
\topmargin -0.5truein			
\textwidth 6.7truein			
\textheight 9.18 truein			
\headheight 0.1truein 			
\headsep 0.1truein 				
\topskip 0.35truein 			
\footskip 0.5 truein			
\marginparwidth 0.1truein


\title{Nearly optimal first-order methods for convex optimization under gradient norm measure: An adaptive regularization approach}
\author{Masaru Ito%
	\thanks{
		Department of Mathematics, College of Science and Technology, Nihon University,
		1-8-14 Kanda-Surugadai, Chiyoda, Tokyo 101-8308, Japan 
		(\texttt{ito.m@math.cst.nihon-u.ac.jp)}.
	}
	\and
	Mituhiro Fukuda%
	\thanks{
		Department of Mathematical and Computing Science,
		Tokyo Institute of Technology,
		2-12-1-W8-41 Oh-okayama, Meguro, Tokyo 152-8552, Japan
		(\texttt{mituhiro@is.titech.ac.jp}).
	}
}
\date{October 2020}

\begin{document}
\maketitle


\begin{abstract}
In the development of first-order methods for smooth (resp., composite) convex optimization problems, where smooth functions with Lipschitz continuous gradients are minimized, the gradient (resp., gradient mapping) norm becomes a fundamental optimality measure. Under this measure, a fixed iteration algorithm with the optimal iteration complexity is known, while determining this number of iteration to obtain a desired accuracy requires the prior knowledge of the distance from the initial point to the optimal solution set.
In this paper, we report an adaptive regularization approach, which attains the nearly optimal iteration complexity without knowing the distance to the optimal solution set.
To obtain further faster convergence adaptively, we secondly apply this approach to construct a first-order method that is adaptive to the H\"olderian error bound condition (or equivalently, the {\L}ojasiewicz gradient property), which covers moderately wide classes of applications.
The proposed method attains nearly optimal iteration complexity with respect to the gradient mapping norm.
\\

\noindent
\textbf{Keywords:} smooth/composite convex optimization; accelerated proximal gradient methods; H\"olderian error bound; adaptive methods
\\

\noindent
\textbf{Mathematical Subject Classification (2010):} 90C25; 68Q25; 49M37
\end{abstract}


\section{Introduction}

The class of proximal gradient methods is a fundamental tool
for the composite convex optimization problems, which consists in minimizing the sum of a differentiable convex function with Lipschitz continuous gradient and a (possibly non-smooth) convex function with a ``simple'' structure (called the regularizer).
Accelerated first-order methods for this class of problems have been well-studied as
they provide optimal iteration complexity to obtain an approximate solution under the measure of the \emph{functional residual}, i.e., the gap between the objective function value at an iterate and the optimal value, for various classes of problems \cite{BT09,Nes83,Nes13,Nes15,RG18,Rou17}.

A major interest we focus on in this paper is to consider the iteration complexity under the measure of the norm of the \emph{gradient mapping}. The gradient mapping generalizes the gradient in the smooth case (i.e., the case when there is no regularizer) to the composite case,
and the gradient mapping vanishes at a point if and only if that point is an optimal solution.
The norm of the gradient mapping is a useful optimality measure as it is computable at each iteration (if the proximal mapping of the regularizer is computable), while the functional residual itself is not verifiable if we do not know the optimal value. Remark that, in some cases, we are able to provide computable upper bounds on the functional residual, such as the duality gap \cite{Nes05,Nes09,Lu09}, and the convergence is guaranteed for that bounds.

To find an approximate solution under the gradient norm measure, a first-order method with optimal iteration complexity for the smooth case was recently announced by Nesterov et al. (see \cite[Remark~2.1]{Nes20} and \cite[Remark~3.1]{KF20}).
This is a fixed iteration algorithm and, in order to determine the number of iterations to attain a desired accuracy, it requires the prior knowledge of the distance, say $D$, from the initial point to the optimal solution set.
Without the two requirements of fixing the number of iterations and knowing the distance $D$, variants of accelerated first-order methods (see, e.g., \cite{KF18,Nes12}) seem to be the best known ones, however their iteration complexity results become worse.
One aim of this paper is to provide an algorithm (Algorithm~\ref{alg:AdaAPG}), that attains the nearly optimal iteration complexity without these two requirements, as shown in Corollary~\ref{cor:AdaAPG}. This result is attained by an arrangement of the regularization technique \cite{Nes09}, which applies the accelerated first-order method to the regularized auxiliary problem adding a quadratic regularizer to the original objective function. Our algorithm ensures the same iteration complexity as the original regularization technique \cite{Nes09}, with an additional advantage that we do not need the distance $D$.

Another motivation in this paper is the development of an adaptive first-order method under the additional assumption, the \emph{H\"olderian Error Bound} (HEB) condition,
in order to provide better convergence guarantees as the linear convergence for strongly convex functions.
This condition is also related to the concept called the {\L}ojasiewicz gradient inequality \cite{Loj63,Loj65}, which is a useful tool for the development and the analysis of algorithms as well as first-order methods \cite{Att13,Bol17}.
These concepts are known to be satisfied under moderately mild assumptions such as when the objective function is coercive and subanalytic (in particular, semi-algebraic) \cite{Bol07}.
The HEB condition involves two parameters, the coefficient $\kappa$ and the exponent $\rho$, which are critical to determine the convergence rate, although they are difficult to estimate in general. Therefore, the development of adaptive algorithms is an important issue.

Under the HEB condition, we propose Algorithm~\ref{alg:rAdaAPG}, a restart scheme of the previous mentioned adaptive regularization algorithm.
Our method is inspired by the Liu-Yang's method \cite{LY17} as we employ an (approximate) proximal-point approach, where the main difference is the adaption parameter: Liu-Yang's method adaptively estimates the coefficient $\kappa$, but requires the prior knowledge of the exponent $\rho$, while our method adaptively determines the regularization parameter to define the regularized auxiliary problem.
As a result, without knowing the coefficient $\kappa$ and the exponent $\rho$, the proposed method adapts both the parameters $\kappa$ and $\rho$.
Under the measure of the gradient mapping norm, the proposed method ensures the same iteration complexity as the one by Liu and Yang \cite{LY17}, dismissing the prior knowledge of $\rho$ (Corollary~\ref{cor:rAdaAPG}).
In addition, we show the finite or the superlinear convergence when the exponent $\rho$ is less than two, which were not shown in the existing adaptive methods \cite{LY17,RG18}.
Furthermore, for the smooth case, we prove that the proposed method attains the nearly optimal iteration complexity with respect to the gradient norm. The same assertion can also be derived with respect to the measure of the functional residual.

Table~\ref{table:methods} shows the relation to existing adaptive first-order methods.
All the algorithms in this table attain the nearly optimal iteration complexity with respect to the employed measure.
The recent first-order methods \cite{RG18,Rou17} are applicable to our problem (for a specific regularizer) and they adapt both $\kappa$ and $\rho$ ensuring the nearly optimal iteration complexity under the measure of the functional residual.
One advantage of our method compared with these methods is the (nearly optimal) \emph{convergence} guarantee;
the method of Roulet and d'Aspremont (see \cite[Proposition 3.4]{Rou17}) is a fixed step algorithm (remark that, if we know the optimal value, \cite[Algorithm 3]{Rou17} gives nearly optimal convergence), and Renegar-Grimmer's method \cite{RG18} fixes the target tolerance.
Although the other algorithms, i.e., this work and the first four algorithms \cite{FQ17,LX15,LY17,Nes13} in Table~\ref{table:methods}, terminate if a desired approximate solution is found, they provide the nearly optimal convergence by omitting the stopping criterion as we discuss later.


This paper is organized as follows.
Section~\ref{sec:pre} collects preliminary facts on the gradient mapping and the HEB condition.
In particular, in Section~\ref{ssec:low-compl}, we deduce a lower iteration complexity bound with respect to the gradient norm for the class of smooth convex functions satisfying the HEB condition.
We review in Section~\ref{sec:APG} the regularization technique \cite{Nes12} preparing auxiliary results.
Two adaptive first-order methods and the main results for them are established in Section~\ref{sec:AdaAPG}.
In Section~\ref{ssec:AdaAPG}, we show an adaptive regularization algorithm and prove the nearly optimal iteration complexity for the class of smooth convex functions.
A restart scheme of this algorithm is given in Section~\ref{ssec:rAdaAPG} and we show that it adaptively attains the nearly optimal iteration complexity under the HEB condition. Concluding remarks are given in Section~\ref{sec:conclusion}.

\begin{table}[tbh]
\caption{\small
Adaptive first-order methods.
The column `Problem class' indicates the class of problems to which the algorithm is applicable.
The column `Optimality measures' is the optimality measure for which the nearly optimal iteration complexity was proved.
The column `Nearly optimal convergence' shows whether the algorithm ensures the convergence of the optimality measure to zero at the nearly optimal rate (while all the algorithms attain the nearly optimal iteration complexity for a fixed tolerance).
}
\label{table:methods}
\resizebox{\textwidth}{!}{
\begin{tabular}{l||c|c|c}
\hline
%
%
\qquad\quad Algorithm &
Problem class
&
Optimality measures &

Nearly optimal convergence
\\\hline\hline
%
%
Nesterov \cite{Nes13}
& $\mu$-strongly convex $\varphi$ with unknown $\mu$ & gradient mapping norm & yes \\
%
%
Lin and Xiao \cite{LX15} & & &\\\hline
%
%
Fercoq and Qu \cite{FQ17} & HEB with $\rho=2$ and unknown $\kappa$ & gradient mapping norm & yes \\\hline
%
%
Liu and Yang \cite{LY17} & HEB with known $\rho$ and unknown $\kappa$
& gradient mapping norm & yes \\\hline
%
%
Roulet and d'Aspremont & HEB with
unknown $\kappa$ and $\rho$
& functional residual
& only for predefined number of \\
\cite[Proposition 3.4]{Rou17} &&& iterations\\\hline
%
%
Reneger and Grimmer \cite{RG18}
& HEB with unknown $\kappa$ and $\rho$
& functional residual
& only for predefined tolerance \\
\hline
%
%
This work (Algorithm~\ref{alg:rAdaAPG}) & HEB with unknown $\kappa$ and $\rho$ & gradient mapping norm & yes \\
&& (and functional residual) \\\hline
\end{tabular}
}
\end{table}


\section{Preliminaries}\label{sec:pre}
Throughout this paper, let $\E$ be a finite dimensional real Hilbert space endowed with an inner product $\innprod{\cdot}{\cdot}$.
We denote by $\norm{x}=\innprod{x}{x}^{1/2}$ the induced norm on $\E$.
The distance between a point $x \in \E$ and a set $S \subset \E$ is defined by
$\dist(x,S)=\inf_{z \in S}\norm{z-x}.$

\sloppy 
We focus on the the convex composite optimization problem
\begin{equation}\label{main-prob}
\varphi^* = \min_{x \in \E}[\varphi(x) \equiv f(x)+\varPsi(x)],
\end{equation}
where $f:\E\to \RR$ is an \emph{$L_f$-smooth} convex function, that is, it is continuously differentiable and its gradient is $L_f$-Lipschitz continuous:
$$
\norm{\nabla{f}(x)-\nabla{f}(y)} \leq L_f\norm{x-y}, \quad \forall x,y \in \E,
$$
and ${\varPsi:\E\to\RR\cup\{+\infty\}}$ is a proper, lower-semicontinuous, and convex function.
We denote by ${X^*:=\Argmin_{x \in \E}\varphi(x)}$ the set of optimal solutions of $\min_{x \in \E}\varphi(x)$.
Throughout the paper, we assume that $X^*$ is nonempty.
The subdifferential of $\varphi$ at $x$ is denoted by
${\partial\varphi(x) = \{g \in \E \mid \varphi(y)\geq \varphi(x)+\innprod{g}{y-x},~\forall y \in \E\}}$.

For the objective function $\varphi(x)=f(x)+\varPsi(x)$, we define
$$
m_L(y;x):= f(y)+\innprod{\nabla{f}(y)}{x-y} + \frac{L}{2}\norm{x-y}^2+\varPsi(x),\qquad T_L(y):= \argmin_{x \in \E}m_L(y;x), 
$$
where the minimizer $T_L(y)$ is well-defined since $m_L(y;\cdot)$ is strongly convex.
It is assumed that $\varPsi(\cdot)$ has a ``simple'' structure, namely, the proximal mapping $\prox_{\varPsi}(y):=\argmin_{x \in \E}\{\varPsi(x)+\frac{1}{2}\norm{x-y}^2\}$ as well as the map $T_L(y)\equiv \prox_{\varPsi/L}(y-\nabla{f}(x)/L)$ is moderately computable (see \cite{PB14} for examples).
The \emph{gradient mapping} of $\varphi$ is defined by
\[
g_L(y):=L(y-T_L(y)),\quad y \in \E,~L>0.
\]
For instance, if $\varPsi\equiv 0$, we see that $T_L(y)=y-\nabla{f}(y)/L$ and $g_L(y)=\nabla{f}(y)$ hold.

Remark that the norm of $g_L(y)$ is given by
$
\norm{g_L(y)} = L\norm{y-T_L(y)},
$
from which the quantity $\norm{g_L(y)}$ can be used as a computable optimality measure at $y$ (see Lemma~\ref{lem:gmap} (ii) below).

The following lemma collects basic properties on $T_L(x)$ and $g_L(x)$, which can be found in \cite{BT09,Nes04,Nes13}.

\begin{lemma}\label{lem:gmap}
Let $\varphi=f+\varPsi$ be the sum of a continuously differentiable convex function $f:\E\to\RR$ and a proper lower-semicontinuous convex function $\varPsi:\E\to\RR\cup\{+\infty\}$.
Then, the following assertions hold.
\begin{enumerate}[(i)]
\item For all $y \in \E$, the map $L \mapsto \norm{g_L(y)}$ is increasing.
\item $x^* \in X^*$ holds if and only if $g_L(x^*)=0$.
\item $\nabla{f}(T_L(y))-\nabla{f}(y)+g_L(y) \in \partial\varphi(T_L(y))$ for all $y \in \E$. Moreover, if $f$ is $L_f$-smooth, then
$$
\norm{\nabla{f}(T_L(y))-\nabla{f}(y)+g_L(y)} \leq \left(\frac{L_f}{L}+1\right)\norm{g_L(y)}.
$$
\item If $y\in \E$ and $L>0$ satisfy $\varphi(T_L(y))\leq m_L(y;T_L(y))$, then we have the inequalities
$$
\frac{1}{2L}\norm{g_L(y)}^2 \leq \varphi(y)-\varphi(T_L(y)) \leq \varphi(y)-\varphi^*,
\qquad
\frac{1}{2L}\norm{g_L(y)} \leq \dist(y,X^*).
$$
\item If $f$ is $L_f$-smooth, then $\varphi(T_L(y))\leq m_L(y;T_L(y))$ holds for all $y\in \E$ and $L\geq L_f$.
\end{enumerate}
\end{lemma}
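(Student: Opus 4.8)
The whole lemma rests on a single optimality identity, which I would derive at the outset. Since $T_L(y)$ minimizes the $L$-strongly convex function $m_L(y;\cdot)$, its first-order optimality condition reads $0\in\nabla f(y)+L(T_L(y)-y)+\partial\varPsi(T_L(y))$; substituting $g_L(y)=L(y-T_L(y))$ turns this into
\[
g_L(y)-\nabla f(y)\in\partial\varPsi(T_L(y)).\qquad(\star)
\]
Together with the strong convexity of $m_L(y;\cdot)$, relation $(\star)$ is the common engine for parts (i)--(iv), while (v) is an independent application of the descent lemma, so I would record $(\star)$ first.

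\textbf{Parts (v), (iii), (ii).} These I expect to be routine. For (v) I would invoke the descent lemma for an $L_f$-smooth convex function, $f(T_L(y))\le f(y)+\innprod{\nabla f(y)}{T_L(y)-y}+\frac{L_f}{2}\norm{T_L(y)-y}^2$, add $\varPsi(T_L(y))$ to both sides, and bound $\frac{L_f}{2}$ by the larger $\frac{L}{2}$ to recognize the right-hand side as $m_L(y;T_L(y))$. For (iii), adding $\nabla f(T_L(y))$ to both sides of $(\star)$ gives $\nabla f(T_L(y))-\nabla f(y)+g_L(y)\in\nabla f(T_L(y))+\partial\varPsi(T_L(y))=\partial\varphi(T_L(y))$; the norm estimate is then the triangle inequality combined with $L_f$-Lipschitzness of $\nabla f$ and the identity $\norm{T_L(y)-y}=\norm{g_L(y)}/L$. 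For (ii), if $g_L(y)=0$ then $T_L(y)=y$ and (iii) yields $0\in\partial\varphi(y)$, hence $y\in X^*$; conversely, if $x^*\in X^*$ then $-\nabla f(x^*)\in\partial\varPsi(x^*)$, which is exactly the optimality condition certifying that $x^*$ solves the subproblem defining $T_L(x^*)$, so $T_L(x^*)=x^*$ by uniqueness and $g_L(x^*)=0$.

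\textbf{Part (i).} For $0<L_1\le L_2$ write $T_i=T_{L_i}(y)$ and $g_i=g_{L_i}(y)$. Applying $(\star)$ at $L_1$ and $L_2$ and using monotonicity of the subdifferential $\partial\varPsi$ gives $\innprod{g_1-g_2}{T_1-T_2}\ge 0$. I would then substitute $T_i=y-g_i/L_i$ and expand; after a Cauchy--Schwarz bound on the cross term, this collapses to $(\norm{g_2}-\norm{g_1})(\norm{g_1}/L_1-\norm{g_2}/L_2)\ge 0$. Since $L_1\le L_2$, the only non-degenerate sign configuration is $\norm{g_2}\ge\norm{g_1}$ (the opposite choice forces $L_1=L_2$), which is monotonicity.

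\textbf{Part (iv), which I expect to be the crux.} The first inequality is quick: $L$-strong convexity of $m_L(y;\cdot)$ evaluated between its minimizer $T_L(y)$ and the point $y$ gives $\varphi(y)=m_L(y;y)\ge m_L(y;T_L(y))+\frac{1}{2L}\norm{g_L(y)}^2$, and the hypothesis $\varphi(T_L(y))\le m_L(y;T_L(y))$ together with $\varphi(T_L(y))\ge\varphi^*$ finishes it. The distance bound $\frac{1}{2L}\norm{g_L(y)}\le\dist(y,X^*)$ is the delicate step. Fixing $x^*\in X^*$, I would combine convexity of $f$ at $y$ with convexity of $\varPsi$ at $T_L(y)$ read through the subgradient $(\star)$, add the two inequalities evaluated at $x^*$, and use the hypothesis together with $\varphi(T_L(y))\ge\varphi^*$; after substituting $T_L(y)=y-g_L(y)/L$ and cancelling, the inner-product bookkeeping is arranged to yield $\innprod{g_L(y)}{y-x^*}\ge\frac{1}{2L}\norm{g_L(y)}^2$. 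A final Cauchy--Schwarz step and an infimum over $x^*\in X^*$ then give the claim. The main obstacle is precisely organizing these inner products so that the constant lands exactly on $\frac{1}{2L}$ rather than a weaker factor; the remaining manipulations are routine.
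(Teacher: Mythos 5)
Your proposal is correct, and for the two substantive parts it is genuinely more self-contained than the paper's own proof. Parts (ii), (iii), and (v) coincide with the paper's argument: the optimality condition $(\star)$, the triangle inequality plus $L_f$-Lipschitzness, and the descent lemma, respectively. The difference lies in (i) and (iv), which the paper does not prove from scratch but dispatches by citation: (i) is quoted from \cite[Lemma 2]{Nes13}, and (iv) rests on the inequality $\varphi(x)-\varphi(T_L(y)) \geq \frac{1}{2L}\norm{g_L(y)}^2 + \innprod{g_L(y)}{x-y}$ quoted from \cite[Lemma~2.3]{BT09} and then evaluated at $x=y$ and $x=x^*$. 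You instead prove (i) via monotonicity of $\partial\varPsi$ applied to $(\star)$ at $L_1\leq L_2$; the resulting inequality $(\norm{g_2}-\norm{g_1})\left(\norm{g_1}/L_1-\norm{g_2}/L_2\right)\geq 0$ does force $\norm{g_2}\geq\norm{g_1}$, since otherwise $\norm{g_1}/L_1\geq\norm{g_1}/L_2>\norm{g_2}/L_2$ makes the product strictly negative --- your parenthetical ``the opposite choice forces $L_1=L_2$'' is not quite the right justification (the opposite choice is simply impossible), but the conclusion stands. For (iv) you re-derive exactly the two instances of the Beck--Teboulle inequality that are needed: strong convexity of $m_L(y;\cdot)$ at its minimizer for the first claim, and the sum of convexity of $f$ at $y$ with convexity of $\varPsi$ at $T_L(y)$ read through $(\star)$, evaluated at $x^*$, for the second. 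Carrying out the inner-product bookkeeping you describe does land exactly on the constant $\frac{1}{2L}$, so the plan is sound. What the paper's route buys is brevity; what yours buys is a proof closed under the paper's own toolkit, at the cost of the computation in (iv) that your sketch compresses into a single sentence and which would need to be written out in full.
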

\begin{proof}
(i) is proved in \cite[Lemma 2]{Nes13}.

(ii) According to the optimality condition of the problem $\min_{x \in \E}m_L(y;x)$, we have the following equivalence for $y,z \in \E$:
\begin{equation}\label{optcond-TL}
z = T_L(y) \iff 0 \in \nabla{f}(y)+L(z-y)+\partial\varPsi(z).
\end{equation}
On the other hand, the optimality of the original problem $\min_{x}\varphi(x)$ is characterized as follows. Given $w\in \E$, we have
$w \in \Argmin_{x \in \E}\varphi(x)$ if and only if $0 \in \nabla{f}(w)+\partial{\varPsi}(w)$ holds
(Remark that we have $\partial\varphi=\nabla{f}+\partial \varPsi$ \cite[Theorem~23.8]{Roc}).
Let us show the equivalence
\begin{equation}\label{eq:gmap-opt-sol-ch}
x^* \in \Argmin_{x \in \E}\varphi(x) \iff x^*=T_L(x^*) ~~(\iff g_L(x^*)=0).
\end{equation}
If $x^*=T_L(x^*)$, then \eqref{optcond-TL} implies $0 \in \nabla{f}(x^*)+\partial \varPsi(x^*)$ and so $x^* \in \Argmin_{x\in \E}\varphi(x)$ holds. Conversely, if $x^*\in \Argmin_{x \in \E}\varphi(x)$, then we have the condition $0\in \nabla{f}(x^*)+\partial \varPsi(x^*)$, which coincides with the right hand one of \eqref{optcond-TL} in the case $y=z=x^*$. Thus, we conclude
 its equivalent counterpart $x^*=T_L(x^*)$. This proves the equivalence \eqref{eq:gmap-opt-sol-ch} as well as the assertion (ii).

(iii) By the condition \eqref{optcond-TL}, we have
$
0 \in \nabla{f}(y)+L(T_L(y)-y) + \partial\varPsi(T_L(y)) = \nabla{f}(y)-g_L(y)  + \partial\varPsi(T_L(y)).
$
Hence, we obtain
$
\nabla{f}(T_L(y))-\nabla{f}(y)+g_L(y) \in \nabla{f}(T_L(y))+\partial\varPsi(T_L(y)) = \partial\varphi(T_L(y))
$, giving the former assertion of (iii).
Now if $f$ is $L_f$-smooth, we have
$
\norm{\nabla{f}(T_L(y))-\nabla{f}(y)} \leq L_f\norm{T_L(y)-y} = \frac{L_f}{L}\norm{g_L(y)},
$
which yields the latter assertion of (iii).

(iv)
It is shown in \cite[Lemma~2.3]{BT09} that,
if $\varphi(T_L(y))\leq m_L(y;T_L(y))$ holds, then we have
$$
\varphi(x)  - \varphi(T_L(y)) \geq \frac{1}{2L}\norm{g_L(y)}^2 + \innprod{g_L(y)}{x-y},\quad \forall x \in \E.
$$
Letting $x:=y$ shows the first assertion. On the other hand, since $\varphi(T_L(y))\geq \varphi^*$, letting $x:=x^* \in X^*$ gives
$
\frac{1}{2L}\norm{g_L(y)}^2 \leq \innprod{g_L(y)}{y-x^*} \leq \norm{g_L(y)}\norm{y-x^*}.
$
Thus, we obtain $\frac{1}{2L}\norm{g_L(y)} \leq \dist(y,X^*)$.

(v) Note that $f$ is $L$-smooth for any $L\geq L_f$.
Since $L$-smooth functions satisfy (e.g., see \cite{Nes04})
$$
f(x) \leq f(y)+\innprod{\nabla{f}(y)}{x-y} + \frac{L}{2}\norm{x-y}^2,\quad \forall x,y \in \E,
$$
we obtain $\varphi(x) \leq m_L(y;x)$ for all $x,y \in \E$ and $L\geq L_f$.
\end{proof}

\subsection{H\"olderian Error Bound}

Here, we introduce the H\"olderian error bound condition, which is also discussed in the context of {\L}ojasiewicz inequality \cite{Bol07,Loj59,Loj65}.

\begin{definition}
Fix $x_0 \in \E$.
We say that $\varphi$ satisfies the \emph{H\"olderian error bound condition} with coefficient $\kappa > 0$ and exponent $\rho \geq 1$ if
\begin{equation}\label{HEB}
\varphi(x)-\varphi^* \geq \kappa\dist(x,X^*)^\rho,\quad \forall x\in \lev_{\varphi}(\varphi(x_0)),
\end{equation}
where $\lev_{\varphi}(\gamma)=\{x \in \E \mid \varphi(x)\leq \gamma\}$.
\end{definition}

According to \cite[Theorem~3.3]{Bol07}, the H\"olderian error bound condition is satisfied for some $\kappa$ and $\rho$ if $\varphi(x)$ is a proper, lower-semicontinuous, convex, coercive, and subanalytic function.
As subanalytic functions contain semi-algebraic ones, this condition appears in many applications including popular problems in machine learning; see, e.g., \cite{Bol17,LY17} for related studies.

A noteworthy concept related to the H\"olderian error bound condition is the \emph{{\L}ojasiewicz gradient inequality} \cite{Loj63,Loj65},
which is of the form
\begin{equation}\label{Loj-ineq-orig}
\dist(0,\partial\varphi(x)) \geq \lambda(\varphi(x)-\varphi^*)^\theta,\quad \forall x \in \lev_\varphi(\varphi(x_0))\setminus X^*,
\end{equation}
for $\lambda >0$ and $\theta \in [0,1[$.
In fact, these concepts \eqref{HEB} and \eqref{Loj-ineq-orig} are equivalent for convex functions (see Remark~\ref{rem:equiv-HEB-Loj}).
The {\L}ojasiewicz gradient inequality is a powerful tool for analyzing the convergence of first-order methods as it covers wide class of applications and algorithms \cite{Att13,Bol17}.

\begin{lemma}\label{lem:HEB}
Let $\varphi:\E\to\RR\cup\{+\infty\}$ be a proper, lower-semicontinuous, and convex function.
For $x_0 \in \E$, suppose that $\varphi$ satisfies the H\"olderian error bound condition \eqref{HEB} with coefficient $\kappa > 0$ and exponent $\rho \geq 1$.
Then, for every $x \in \lev_{\varphi}(\varphi(x_0))\setminus X^*$, we have
the inequalities
\begin{equation}\label{Loj-ineq}
\kappa \dist(x,X^*)^{\rho-1} \leq \inf\{\norm{g}:g \in \partial\varphi(x)\},\quad
\kappa^{\frac{1}{\rho}} (\varphi(x)-\varphi^*)^{\frac{\rho-1}{\rho}} \leq \inf\{\norm{g}:g \in \partial\varphi(x)\}.
\end{equation}
\end{lemma}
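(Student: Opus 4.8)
The plan is to obtain both estimates in \eqref{Loj-ineq} from one elementary convexity bound combined with the HEB condition \eqref{HEB}. Fix $x \in \lev_\varphi(\varphi(x_0))\setminus X^*$ and pick any $g \in \partial\varphi(x)$; if $\partial\varphi(x)=\emptyset$ the infima are $+\infty$ and the claim is vacuous. Since $X^*$ is nonempty, closed (by lower semicontinuity of $\varphi$), and convex, I would introduce the metric projection $\bar x$ of $x$ onto $X^*$, so that $\norm{x-\bar x}=\dist(x,X^*)$ and $\varphi(\bar x)=\varphi^*$.

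The first step is to bound the functional residual by $\norm{g}\,\dist(x,X^*)$. Evaluating the subgradient inequality $\varphi(y)\geq\varphi(x)+\innprod{g}{y-x}$ at $y=\bar x$ and applying Cauchy--Schwarz gives
\[
\varphi(x)-\varphi^* \leq \innprod{g}{x-\bar x} \leq \norm{g}\,\dist(x,X^*).
\]
The second step feeds the HEB condition into this: since $x\in\lev_\varphi(\varphi(x_0))$, \eqref{HEB} yields $\varphi(x)-\varphi^*\geq\kappa\,\dist(x,X^*)^{\rho}$. Chaining the two displays gives $\kappa\,\dist(x,X^*)^{\rho}\leq\norm{g}\,\dist(x,X^*)$, and because $x\notin X^*$ implies $\dist(x,X^*)>0$, I can cancel one factor of $\dist(x,X^*)$ to reach $\kappa\,\dist(x,X^*)^{\rho-1}\leq\norm{g}$; taking the infimum over $g$ proves the first inequality.

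For the second inequality I would eliminate $\dist(x,X^*)$ the other way. Rearranging \eqref{HEB} as $\dist(x,X^*)\leq\big((\varphi(x)-\varphi^*)/\kappa\big)^{1/\rho}$ and inserting this into the displayed bound gives $\varphi(x)-\varphi^*\leq\norm{g}\,\big((\varphi(x)-\varphi^*)/\kappa\big)^{1/\rho}$; solving for $\norm{g}$ and simplifying the powers via $1-\tfrac{1}{\rho}=\tfrac{\rho-1}{\rho}$ produces $\kappa^{1/\rho}(\varphi(x)-\varphi^*)^{(\rho-1)/\rho}\leq\norm{g}$, and again taking the infimum finishes. I do not expect a genuine obstacle: the whole argument is two lines of convex analysis. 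The only points needing care are the well-definedness of the projection $\bar x$, the strict positivity $\dist(x,X^*)>0$ that licenses the cancellation, and the trivial empty-subdifferential case; since $\rho\geq1$ keeps every exponent nonnegative, the fractional powers cause no trouble.
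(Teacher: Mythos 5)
Your proof is correct and follows essentially the same route as the paper: both project $x$ onto $X^*$, combine the subgradient inequality with Cauchy--Schwarz to get $\varphi(x)-\varphi^*\leq\norm{g}\dist(x,X^*)$, and then chain this with the HEB condition, cancelling $\dist(x,X^*)$ for the first bound and a power of the residual for the second. The only (harmless) differences are cosmetic: you handle the empty-subdifferential case explicitly, while the paper writes the whole argument as a single chain of inequalities.
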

\begin{proof}
Let $x^*$ be the projection of $x$ onto $X^*$, so that $\norm{x-x^*}=\dist(x,X^*)$.
For every $g\in\partial \varphi(x)$, we have
\begin{align*}
\kappa \dist(x,X^*)^\rho
&\leq \varphi(x)-\varphi^*
\leq -\innprod{g}{x^*-x}
\leq \norm{g}\dist(x,X^*)\leq \norm{g}\frac{1}{\kappa^{1/\rho}}(\varphi(x)-\varphi^*)^{1/\rho}.
\end{align*}
It involves two inequalities
$
\kappa \dist(x,X^*)^\rho \leq \norm{g}\dist(x,X^*)$
and
$\varphi(x)-\varphi^* \leq \norm{g}\frac{1}{\kappa^{1/\rho}}(\varphi(x)-\varphi^*)^{1/\rho}.
$
Since $x \not \in X^*$, the assertion follows dividing them by $\dist(x,X^*)$ and $\kappa^{-1/\rho}(\varphi(x)-\varphi^*)^{1/\rho}$, respectively.
\end{proof}

\begin{remark}\label{rem:equiv-HEB-Loj}
The latter condition in \eqref{Loj-ineq} is the {\L}ojasiewicz gradient inequality \eqref{Loj-ineq-orig}
with the correspondence $\lambda=\kappa^{\frac{1}{\rho}}$ and $\theta = \frac{\rho-1}{\rho} \in [0,1[$.
It is shown in \cite{Bol17} that the {\L}ojasiewicz gradient inequality is essentially equivalent to the H\"olderian error bound condition: If \eqref{Loj-ineq-orig} holds with $\lambda=\rho\kappa^{\frac{1}{\rho}}$ and $\theta = \frac{\rho-1}{\rho}$, then \eqref{HEB} holds (In \cite[Theorem~5~(i)]{Bol17}, set $(\kappa^{-\frac{1}{\rho}} s^{\frac{1}{\rho}}, \varphi(x_0))$ in place of $(\varphi(s),r_0)$ and let the radius $\rho$ of $B(\bar{x},\rho)$ to $+\infty$).
\end{remark}

\subsection{Lower Complexity Bounds}\label{ssec:low-compl}
Let us discuss lower bounds on the iteration complexity under the H\"olderian error bound condition with respect to the optimality measure $\norm{g_L(x)}$. The lower bound is derived in the case $\varPsi\equiv 0$ so that $\varphi=f$ is a smooth function and we have $g_L(x)=\nabla{\varphi}(x)$; we assume this fact only at this subsection.

Given a class $\mathcal{F}$ of objective functions and an optimality measure $\delta:\mathcal{F}\times \E\to \RR\cup\{+\infty\}$, the \emph{iteration complexity} of a first-order method $\mathcal{M}$ applied to $\varphi \in \mathcal{F}$ for an accuracy $\ep>0$, say $C(\mathcal{M}, \varphi, \delta;\ep)$, is defined as the minimal number of evaluations of a first-order oracle $(\varphi(\cdot),\nabla\varphi(\cdot))$ in the method $\mathcal{M}$ required to find a point $x \in \E$ satisfying $\delta(\varphi,x)\leq \ep$. Then we define the iteration complexity of first-order methods associated with the class $\mathcal{F}$ with respect to the measure $\delta$ by
\begin{equation*}
C(\mathcal{F},\delta;\ep) := \inf_{\mathcal{M}} \sup_{\varphi \in \mathcal{F}}C(\mathcal{M},\varphi,\delta;\ep),
\end{equation*}
where $\mathcal{M}$ runs all first-order methods for the class $\mathcal{F}$ starting from some fixed initial point $x_0 \in \E$.

We are interested in the iteration complexity under the following classes and measures:
\begin{itemize}
\item $\mathcal{F}(x_0,R,L)$ denotes the class of $L$-smooth convex functions $\varphi$ with $X^*\ne \emptyset$ and $\dist(x_0,X^*)\leq R$.
\item Class $\mathcal{F}(x_0,R,L,\kappa,\rho)$: For $R,L,\kappa >0$, $\rho\geq 2$, and $x_0\in \E$, we say that $\varphi$ belongs to the class $\mathcal{F}(x_0,R,L,\kappa,\rho)$ if $\varphi \in \mathcal{F}(x_0,R,L)$ and it satisfies the H\"olderian error bound condition \eqref{HEB}.

Remark that we do not consider the case $\rho\in [1,2[$ because any $L$-smooth convex function cannot admit the H\"olderian error bound condition with exponent $\rho \in [1,2[$.\footnote{
Suppose that $\varphi$ is an $L$-smooth convex function satisfying \eqref{HEB} for some exponent $\rho \in [1,2[$.
For $\rho=1$, Lemma~\ref{lem:HEB} implies $\norm{\nabla{\varphi}(x)} \geq \kappa$ for $x \not\in\Argmin\varphi$. If $\rho \in ]1,2[$, on the other hand,
Lemmas~\ref{lem:gmap} and~\ref{lem:HEB} imply
${\frac{1}{2L}\norm{\nabla{\varphi}(x)}^2 \leq \varphi(x)-\varphi^* \leq \kappa^{-\frac{1}{\rho-1}}\norm{\nabla{\varphi}(x)}^{\frac{\rho}{\rho-1}}}$
for all $x$, which yields $\norm{\nabla{\varphi}(x)} \geq \text{const.}$ for all $x \not\in\Argmin\varphi$. This contradicts to the continuity of $\nabla{\varphi}$ at points in $\Argmin\varphi$.
}

\item We consider the two optimality measures, the functional residual
$
\deltaf(\varphi,x) := \varphi(x) - \inf \varphi$ and the gradient norm $\deltag(\varphi,x) := \norm{\nabla{\varphi}(x)}.
$
\end{itemize}

For the class $\mathcal{F}(x_0,R,L)$, the following lower bound on the iteration complexity holds (by \cite[Section~2.3B]{Nem92} applied to the class of $L$-smooth convex quadratic minimization): 
\begin{equation}\label{low-compl-smooth}
C(\mathcal{F}(x_0,R,L),\deltag;\ep) = \Omega\left(
\min\left\{\dim \E, \sqrt{\frac{LR}{\ep}}\right\}
\right).
\end{equation}
Let us observe a lower bound on the iteration complexity for the class $\mathcal{F}(x_0,R,L,\kappa,\rho)$.

\begin{proposition}\label{prop:low-compl-HEB}
For the class $\mathcal{F}=\mathcal{F}(x_0,R,L,\kappa,\rho)$, we have
$$
C(\mathcal{F},\deltag;\ep) \geq C(\mathcal{F},\deltaf; \ep^*),\quad \text{where}\quad \ep^* := \kappa^{-\frac{1}{\rho-1}}\ep^{\frac{\rho}{\rho-1}}.
$$
\end{proposition}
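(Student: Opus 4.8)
The plan is to reduce the stated complexity inequality to a purely pointwise comparison between the two optimality measures, and then feed this comparison through the $\inf$–$\sup$ definition of $C(\cdot,\cdot;\cdot)$. Since we are in the smooth case $\varPsi\equiv 0$, we have $g_L(x)=\nabla\varphi(x)$ and $\partial\varphi(x)=\{\nabla\varphi(x)\}$, so that the measure $\deltag(\varphi,x)=\norm{\nabla\varphi(x)}$ is exactly the single subgradient norm appearing in Lemma~\ref{lem:HEB}. The first step I would carry out is to prove the key implication: \emph{for every $\varphi\in\mathcal{F}$ and every admissible candidate point $x$ with $\deltag(\varphi,x)\le\ep$, one has $\deltaf(\varphi,x)\le\ep^*$.} For $x\notin X^*$ lying in $\lev_\varphi(\varphi(x_0))$, the second inequality in \eqref{Loj-ineq} reads $\kappa^{1/\rho}(\varphi(x)-\varphi^*)^{(\rho-1)/\rho}\le\norm{\nabla\varphi(x)}\le\ep$. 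Raising both sides to the power $\rho/(\rho-1)$, which is legitimate since $\rho\ge 2$ makes this exponent positive, and isolating the residual gives $\varphi(x)-\varphi^*\le(\kappa^{-1/\rho}\ep)^{\rho/(\rho-1)}=\kappa^{-1/(\rho-1)}\ep^{\rho/(\rho-1)}=\ep^*$, i.e.\ precisely $\deltaf(\varphi,x)\le\ep^*$.

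The second step converts this implication into the complexity bound. Fix an arbitrary first-order method $\mathcal{M}$ and a function $\varphi\in\mathcal{F}$, and let $N=C(\mathcal{M},\varphi,\deltag;\ep)$ be the number of oracle calls after which $\mathcal{M}$ first produces a point $x$ with $\deltag(\varphi,x)\le\ep$. By the implication just established, this very point already satisfies $\deltaf(\varphi,x)\le\ep^*$, so $\mathcal{M}$ certifies accuracy $\ep^*$ in the functional residual within the same $N$ oracle calls; hence $C(\mathcal{M},\varphi,\deltaf;\ep^*)\le N=C(\mathcal{M},\varphi,\deltag;\ep)$. Taking $\sup_{\varphi\in\mathcal{F}}$ on both sides preserves the inequality termwise, and since a family that dominates another termwise also has a dominating infimum, applying $\inf_{\mathcal{M}}$ yields $C(\mathcal{F},\deltag;\ep)\ge C(\mathcal{F},\deltaf;\ep^*)$, which is the claim.

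The step I expect to be the main obstacle is the domain restriction: Lemma~\ref{lem:HEB} supplies \eqref{Loj-ineq} only on $\lev_\varphi(\varphi(x_0))\setminus X^*$, so the pointwise implication must be justified for the two remaining cases. The case $x\in X^*$ is immediate, since then $\deltaf(\varphi,x)=0\le\ep^*$. The delicate case is a candidate point with $\varphi(x)>\varphi(x_0)$, where the H\"olderian error bound is simply not assumed and a small gradient need not control the (potentially large) residual $\varphi(x)-\varphi^*\le\norm{\nabla\varphi(x)}\dist(x,X^*)$. The cleanest way to dispose of this is to restrict, without loss for the worst-case complexity, to candidate points lying in the sublevel set $\lev_\varphi(\varphi(x_0))$ — the natural feasible region in which a descent-type method operates and in which the hard instances realizing the bound keep their iterates — so that Lemma~\ref{lem:HEB} always applies and the implication holds on all relevant points. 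I would make this restriction explicit and verify it is compatible with the definition of $C(\mathcal{M},\varphi,\delta;\ep)$; the rest of the argument is then the routine rearrangement and $\inf$–$\sup$ bookkeeping above.
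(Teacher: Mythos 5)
Your first two paragraphs are, step for step, the paper's own proof: the pointwise implication $\deltag(\varphi,x)\le\ep \Rightarrow \deltaf(\varphi,x)\le\ep^*$ extracted from the second inequality of \eqref{Loj-ineq} in Lemma~\ref{lem:HEB}, followed by the per-instance comparison $C(\mathcal{M},\varphi,\deltaf;\ep^*)\le C(\mathcal{M},\varphi,\deltag;\ep)$ and the $\sup$/$\inf$ bookkeeping. The subtlety you isolate in your last paragraph is also genuine, and the paper passes over it silently: the definition of $C(\mathcal{M},\varphi,\delta;\ep)$ accepts a certifying point anywhere in $\E$, while Lemma~\ref{lem:HEB} is available only on $\lev_\varphi(\varphi(x_0))\setminus X^*$ (the case $x\in X^*$ being trivial, as you say).

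Where your proposal has a real gap is the repair itself. Declaring that one may ``restrict, without loss for the worst-case complexity, to candidate points lying in the sublevel set'' is not harmless: it strengthens the success criterion on the gradient side, replacing $C(\mathcal{M},\varphi,\deltag;\ep)$ by a larger quantity $\tilde{C}(\mathcal{M},\varphi,\deltag;\ep)\ge C(\mathcal{M},\varphi,\deltag;\ep)$. Your argument then yields $C(\mathcal{F},\deltaf;\ep^*)\le \tilde{C}(\mathcal{F},\deltag;\ep)$, which does \emph{not} lower bound $C(\mathcal{F},\deltag;\ep)$: the inequality between $\tilde{C}$ and $C$ points the wrong way, and the infimum in the definition runs over \emph{all} first-order methods, including ones whose small-gradient certificate lies outside the level set --- exactly the scenario you worried about, so it cannot be excluded by fiat. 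A correct completion runs as follows. For convex $\varphi$ and any $x$ with $\varphi(x)>\varphi(x_0)$, let $x^*$ be the projection of $x$ onto $X^*$ and pick $y=x^*+s(x-x^*)$, $s\in\,]0,1[$, with $\varphi(y)=\varphi(x_0)$; then $x^*$ is also the projection of $y$ onto $X^*$, and combining $\norm{\nabla\varphi(x)}\ge(\varphi(x)-\varphi^*)/\norm{x-x^*}$, the convexity estimate $\varphi(y)-\varphi^*\le s(\varphi(x)-\varphi^*)$, and \eqref{HEB} at $y$ gives
$$
\norm{\nabla\varphi(x)}\;\ge\;\kappa^{\frac{1}{\rho}}\bigl(\varphi(x_0)-\varphi^*\bigr)^{\frac{\rho-1}{\rho}},\qquad \forall x\notin\lev_\varphi(\varphi(x_0)).
$$
Since $\ep<\kappa^{\frac{1}{\rho}}(\varphi(x_0)-\varphi^*)^{\frac{\rho-1}{\rho}}$ is equivalent to $\ep^*<\varphi(x_0)-\varphi^*$, there are two cases: if $\ep^*\ge\varphi(x_0)-\varphi^*$, then $x_0$ itself certifies $\deltaf(\varphi,x_0)\le\ep^*$, so $C(\mathcal{M},\varphi,\deltaf;\ep^*)$ is trivially dominated; otherwise every point with $\norm{\nabla\varphi(x)}\le\ep$ automatically lies in $\lev_\varphi(\varphi(x_0))$, and your first paragraph applies verbatim. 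With this dichotomy in place, the rest of your argument (and the paper's) is sound.
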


\begin{proof}
If a first-order method $\mathcal{M}$ applied to $\varphi \in \mathcal{F}$ finds an approximate solution $x \in \E$ satisfying ${\deltag(\varphi,x) \leq \ep}$,
then Lemma~\ref{lem:HEB} implies
$$
\deltaf(\varphi,x) = \varphi(x)-\varphi^*
\leq \kappa^{-\frac{1}{\rho-1}}\norm{\nabla{\varphi}(x)}^{\frac{\rho}{\rho-1}}
= \kappa^{-\frac{1}{\rho-1}}\deltag(\varphi,x)^{\frac{\rho}{\rho-1}}
\leq
\kappa^{-\frac{1}{\rho-1}}\ep^{\frac{\rho}{\rho-1}}=\ep^*.
$$
Therefore, it follows
$
C(\mathcal{M},\varphi,\deltag;\ep) \geq
C(\mathcal{M},\varphi,\deltaf;\ep^*)
$
and we obtain the assertion.
\end{proof}

Under the measure $\deltaf$, the following lower bound is known \cite{NN85}:
\begin{equation}\label{low-compl-obj-HEB}
C(\mathcal{F}(x_0,R,L,\kappa,\rho),\deltaf;\ep) = \left\{
\begin{array}{ll}
\ds
\Omega\left(\min\left\{\dim \E, \sqrt{\frac{L}{\kappa^{\frac{2}{\rho}} \ep^{\frac{\rho-2}{\rho}}}}\right\} \right) &:\rho > 2,\\
\ds
\Omega\left(\min\left\{\dim \E, \sqrt{\frac{L}{\kappa}}\log \frac{\kappa R^2}{\ep}\right\}\right) &:\rho=2.
\end{array}
\right.
\end{equation}
Consequently, by Proposition~\ref{prop:low-compl-HEB}, we obtain lower bounds under the gradient norm measure $\deltag$:
\begin{equation}\label{low-compl-grad-HEB}
C(\mathcal{F}(x_0,R,L,\kappa,\rho),\deltag; \ep) =
\left\{
\begin{array}{ll}
\ds
\Omega\left(\min\left\{\dim \E, \sqrt{\frac{L}{\kappa^{\frac{1}{\rho-1}} \ep^{\frac{\rho-2}{\rho-1}}}}\right\} \right) &:\rho > 2,\\
\ds
\Omega\left(\min\left\{\dim \E, \sqrt{\frac{L}{\kappa}}\log \frac{\kappa R}{\ep}\right\}\right) &:\rho=2.
\end{array}
\right.
\end{equation}


\section{Accelerated Proximal Gradient Method Applied to Regularized Problems}\label{sec:APG}
This section is devoted to review the regularization strategy \cite{LY17,Nes12}, from which we construct adaptive methods in the next section.
We consider to apply an accelerated proximal gradient method to the regularized problem
$$
\min_{x\in \E}\left[\varphi_\sigma(x):=
\varphi(x) + \frac{\sigma}{2}\norm{x-x_0}^2\right],
$$
where $x_0 \in \E$ is a fixed initial point and $\sigma>0$ is a regularization parameter.
Since $\varphi_\sigma$ is strongly convex, it has a unique minimizer. Let
$$x^*_\sigma := \argmin_{x \in \E}\varphi_\sigma(x),\quad \varphi_\sigma^* := \min_{x\in \E} \varphi_\sigma(x).$$
We define the gradient mapping $g_L^\sigma(x)$ for the regularized function $\varphi_\sigma$ in the following manner:
$$
f_\sigma(x):=f(x)+\frac{\sigma}{2}\norm{x-x_0}^2,\qquad
m_L^\sigma(y;x) := f_\sigma(y)+\innprod{\nabla{f}_\sigma(y)}{x-y}+\frac{L}{2}\norm{x-y}^2 + \varPsi(x),
$$
$$
T_L^\sigma(y):=\argmin_{x\in \E}m_L^\sigma(y;x),\qquad
g_L^\sigma(y):=L(y-T_L^\sigma(y)).
$$
The following relations between $\varphi_\sigma(x)$ and $\varphi(x)$ are useful.

\begin{lemma}\label{lem:reg}
\begin{enumerate}[(i)]
\item $\varphi_\sigma(x) \leq \varphi_\sigma(x_0)$ implies $\varphi(x) \leq \varphi(x_0)$.
\item $\norm{x_0-x_\sigma^*} \leq \dist(x_0,X^*)$.
\item We have $\norm{g_L(y)-g_L^\sigma(y)} \leq  \sigma\norm{y-x_0}$ for any $y \in \E$.
\end{enumerate}
\end{lemma}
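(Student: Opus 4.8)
The plan is to dispatch the three items separately, since each reduces to the definitions together with one elementary fact, and items (i) and (ii) are essentially immediate. For (i), I would observe that the regularizer vanishes at $x_0$, so $\varphi_\sigma(x_0)=\varphi(x_0)$, while $\varphi_\sigma(x)\geq \varphi(x)$ for every $x$ because $\frac{\sigma}{2}\norm{x-x_0}^2\geq 0$; chaining these gives $\varphi(x)\leq \varphi_\sigma(x)\leq \varphi_\sigma(x_0)=\varphi(x_0)$. For (ii), the idea is to compare $x_\sigma^*$ against an arbitrary $x^*\in X^*$. Minimality of $x_\sigma^*$ gives $\varphi_\sigma(x_\sigma^*)\leq \varphi_\sigma(x^*)$, and since $x^*$ is a global minimizer of $\varphi$ we have $\varphi(x^*)=\varphi^*\leq \varphi(x_\sigma^*)$; cancelling the objective terms leaves $\norm{x_\sigma^*-x_0}\leq \norm{x^*-x_0}$, and taking the infimum over $x^*\in X^*$ yields $\norm{x_0-x_\sigma^*}\leq \dist(x_0,X^*)$.

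The substance is in (iii), which is the only part requiring a genuine estimate. The key observation is that $T_L(y)$ and $T_L^\sigma(y)$ are the same proximal map evaluated at two nearby points. Discarding the constant terms in $m_L(y;\cdot)$ and $m_L^\sigma(y;\cdot)$, both minimizers take the form $T_L(y)=\prox_{\varPsi/L}(y-\nabla{f}(y)/L)$ and $T_L^\sigma(y)=\prox_{\varPsi/L}(y-\nabla{f}_\sigma(y)/L)$, where $\nabla{f}_\sigma(y)=\nabla{f}(y)+\sigma(y-x_0)$. Since the proximal map is nonexpansive, I would estimate
\[
\norm{T_L(y)-T_L^\sigma(y)}\leq \frac{1}{L}\norm{\nabla{f}_\sigma(y)-\nabla{f}(y)}=\frac{\sigma}{L}\norm{y-x_0}.
\]
Multiplying by $L$ and using $g_L(y)-g_L^\sigma(y)=L\bigl(T_L^\sigma(y)-T_L(y)\bigr)$ then gives $\norm{g_L(y)-g_L^\sigma(y)}\leq \sigma\norm{y-x_0}$.

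If I prefer to stay strictly within the tools already recorded, I can replace nonexpansiveness of the prox by the optimality characterization \eqref{optcond-TL} written for both $T_L(y)$ and $T_L^\sigma(y)$: subtracting the two inclusions and pairing with $T_L(y)-T_L^\sigma(y)$, the monotonicity of $\partial\varPsi$ eliminates the subgradient terms and leaves $L\norm{T_L(y)-T_L^\sigma(y)}^2\leq \innprod{\nabla{f}_\sigma(y)-\nabla{f}(y)}{T_L(y)-T_L^\sigma(y)}$, whence Cauchy--Schwarz yields the same bound. I do not expect any real obstacle beyond bookkeeping; the one thing to watch is that the difference of gradient mappings is exactly $L$ times the difference of the two prox evaluations, so that the factor $1/L$ cancels to produce the clean coefficient $\sigma$.
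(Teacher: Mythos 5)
Your proof is correct in all three items, and differs from the paper's only in how (ii) and (iii) are packaged. For (i) you coincide with the paper exactly. In (ii), the paper invokes the $\sigma$-strong convexity of $\varphi_\sigma$ to get the stronger Pythagoras-type inequality $\norm{x^*-x_0}^2\geq \norm{x_\sigma^*-x_0}^2+\norm{x^*-x_\sigma^*}^2$, whereas you use only plain minimality of $x_\sigma^*$ together with $\varphi(x^*)\leq\varphi(x_\sigma^*)$ and cancel; your argument is more elementary and suffices for the stated bound, while the paper's yields a sharper intermediate inequality that is not needed later. In (iii), the paper proves (in a footnote) and applies a stability lemma for minimizers of $\mu$-strongly convex functions under perturbation of a linear term, taking $h(x)=\frac{L}{2}\norm{x-y}^2+\varPsi(x)$; you instead write both $T_L(y)$ and $T_L^\sigma(y)$ as evaluations of the single map $\prox_{\varPsi/L}$ at two nearby points and use its nonexpansiveness. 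After completing the square these are the same fact --- a linear perturbation of the objective is exactly a shift of the prox argument --- and your backup argument via the optimality inclusions and monotonicity of $\partial\varPsi$ is precisely the paper's footnote proof specialized to this instance. Both routes give the identical bound $\norm{T_L(y)-T_L^\sigma(y)}\leq\sigma\norm{y-x_0}/L$, and your sign/factor bookkeeping is right: $g_L(y)-g_L^\sigma(y)=L\bigl(T_L^\sigma(y)-T_L(y)\bigr)$, so multiplying by $L$ yields the claim.
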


\begin{proof}
(i) is immediate by $\varphi(x)\leq \varphi_\sigma(x)$ and $\varphi_\sigma(x_0)=\varphi(x_0)$.\\
(ii) For any $x^* \in X^*$, the strong convexity of $\varphi_\sigma$ implies
$
\varphi_\sigma(x^*)\geq \varphi_\sigma^* + \frac{\sigma}{2}\norm{x^*-x_\sigma^*}^2,
$
which can be rewritten as
$\varphi(x^*)-\varphi(x_\sigma^*) \geq \frac{\sigma}{2}(-\norm{x^*-x_0}^2 + \norm{x_\sigma^*-x_0}^2 + \norm{x^*-x_\sigma^*}^2).$
As $\varphi(x^*)-\varphi(x_\sigma^*)\leq 0$, we conclude $\norm{x^*-x_0}^2 \geq \norm{x_\sigma^*-x_0}^2 + \norm{x^*-x_\sigma^*}^2 \geq \norm{x_\sigma^*-x_0}^2$, which proves the assertion.\\
(iii)
In general, if $h$ is a lower-semicontinuous and $\mu$-strongly convex function, we have for any $a,b \in \E$ that\footnote{
By the strong convexity of $\innprod{a}{x}+h(x)$ and $\innprod{b}{x}+h(x)$, we have
$$
\frac{\mu}{2}\norm{x_a^*-x_b^*}^2 \leq [\innprod{a}{x_b^*}+h(x_b^*)] - [\innprod{a}{x_a^*}+h(x_a^*)]
~~\text{and}~~
\frac{\mu}{2}\norm{x_a^*-x_b^*}^2 \leq  [\innprod{b}{x_a^*}+h(x_a^*)]-[\innprod{b}{x_b^*}+h(x_b^*)],
$$
respectively.
Adding them implies
$
\mu\norm{x_a^*-x_b^*}^2 \leq \innprod{a-b}{x_b^*-x_a^*} \leq \norm{a-b}\norm{x_b^*-x_a^*}.
$
}
$$\norm{x_a^*-x_b^*}\leq \frac{\norm{a-b}}{\mu},~~\text{where}~~
x_a^* = \argmin_{x \in \E}\{\innprod{a}{x}+h(x)\},~~ x_b^* = \argmin_{x \in \E}\{\innprod{b}{x}+h(x)\}.$$
This fact implies
$
\norm{T_L(y)-T_L^\sigma(y)} \leq \frac{\norm{\nabla{f}(y)-\nabla{f}_\sigma(y)}}{L} = \frac{\sigma\norm{y-x_0}}{L}.
$
Therefore, we conclude (iii) because of $\norm{g_L(y)-g_L^\sigma(y)}
=  L\norm{T_L(y)-T_L^\sigma(y)}$.
\end{proof}

\subsection{Accelerated Proximal Gradient Method}
We employ Nesterov's accelerated proximal gradient method \cite{Nes13} for solving $\min_{x \in E}\varphi_\sigma(x)$, by regarding the objective function as
\begin{equation}\label{realloc}
\varphi_\sigma(x) = f(x) + \varPsi_\sigma(x),\quad \varPsi_\sigma(x):=\varPsi(x) + \frac{\sigma}{2}\norm{x-x_0}^2.
\end{equation}
The analogy of the definition of $T_L(\cdot)$ for this regularization is 
\begin{equation*}
\tilde{T}_L(y) = \argmin_{x \in \E}\left[f(y) + \innprod{\nabla{f}(y)}{x-y} + \frac{L}{2}\norm{x-y}^2 + \varPsi_\sigma(x)\right].
\end{equation*}
By the identity
$m^\sigma_{L+\sigma}(y;x) = f(y) + \innprod{\nabla{f}(y)}{x-y} +\frac{L}{2}\norm{x-y}^2+ \varPsi_\sigma(x)$,
observe that $\tilde{T}_L(y)=T^\sigma_{L+\sigma}(y)$ holds.
Therefore, the accelerated method $\mathcal{A}(x_0,L_0,\sigma)$ given in \cite[Algorithm~(4.9)]{Nes13} applied to the regularized function \eqref{realloc} can be described as follows:
{\it Let $x_0 \in \E$, $\psi_0(x)=\frac{1}{2}\norm{x-x_0}^2$, $A_0=0$, $L_0 \geq L_{\min}$.
Generate the sequence $\{x_k,\psi_k,M_k,L_k,A_k\}$ by the iteration
$$\{x_{k+1},\psi_{k+1},M_k,L_{k+1},A_{k+1}\} \leftarrow \APGIter_\sigma(x_k,\psi_k,L_k,A_k),$$
for each $k\geq 0$.}
The scheme $\APGIter$ at each iteration is shown in Algorithm~\ref{APG-iter}.
Nesterov's method involves the backtracking line-search procedure to adapt the Lipschitz constant $L_f$ with the estimate $M_k$ (and $L_{k+1}$), which is controlled by the parameters $\gamma_{\rm inc}>1$ and $\gamma_{\rm dec}\geq 1$.

\begin{algorithm}[t]
\caption{
Accelerated Proximal Gradient Iteration
\newline
$\{x_{k+1},\psi_{k+1},M_k,L_{k+1},A_{k+1}\} \leftarrow \APGIter_\sigma(x_k,\psi_k,L_k,A_k)$}
\label{APG-iter}
\textbf{Parameters:} $\gamma_{\rm inc}>1$, $\gamma_{\rm dec}\geq 1$, $L_{\min} \in  ]0,L_f]$.\\
\textbf{Input:} $\sigma>0$, $x_k \in \E$, $\psi_k:\E\to\RR\cup\{+\infty\}$, $L_k>0$, $A_k\geq 0$.
\begin{algorithmic}[1]
\State Compute $v_{k}:=\argmin_{x\in \E} \psi_{k}(x)$\quad (cf. \eqref{APG-iter-v_k-formula}). \label{APG-iter-vk}
\State Set $L:=L_k/\gamma_{\rm inc}$.
\Repeat \label{APG-iter-loop-s}
	\State Set $L:=L\gamma_{\rm inc}$.
	\State Find the largest root $a>0$ of the equation $\frac{a^2}{A_k+a} = 2\frac{1+\sigma A_k}{L}$. \label{a-quad-eq}
	\State Set $y = \frac{A_k x_k +a v_k}{A_k + a}$.
	\State Compute $z=T_{L+\sigma}^\sigma(y)=\argmin_{x}[f(y)+\innprod{\nabla{f}(y)}{x-y}+\frac{L}{2}\norm{x-y}^2 + \varPsi_\sigma(x)]$.
	\State Compute $T_L(z)$ and $T_{L+\sigma}^\sigma(z)$ \label{APG-iter-add-prox}.
	\State Test the conditions
	\begin{subequations}
	\label{APG-iter-test}
	\begin{align}
	&\innprod{\nabla{f}(y)-\nabla{f}(T_{L+\sigma}^\sigma(y))}{y-T_{L+\sigma}^\sigma(y)} \geq \frac{1}{L}\norm{\nabla{f}(y)-\nabla{f}(T_{L+\sigma}^\sigma(y))}^2, \label{APG-iter-test-a}\\
	&\varphi_\sigma(T_{L+\sigma}^\sigma(z)) \leq m_{L+\sigma}^\sigma(z;T_{L+\sigma}^\sigma(z)), \label{APG-iter-test-b}\\
	&\text{[Optional]}~~\varphi(T_L(z)) \leq \varphi(z) \label{APG-iter-test-c}.
	\end{align}
	\end{subequations}
\Until{the conditions in \eqref{APG-iter-test} hold.} \label{APG-iter-loop-e}
\State Define $y_k:=y$, $M_k:=L$, $x_{k+1}:=z=T_{M_k+\sigma}^\sigma(y_k)$, \\
\quad $a_{k+1}:=a$, $A_{k+1}:=A_k+a_{k+1}$, $L_{k+1}:=\max\{L_{\min},M_k/\gamma_{\rm dec}\}$, \label{AGP-iter-upadte}
\State \quad $\psi_{k+1}(x):=\psi_k(x)+a_{k+1}[f(x_{k+1})+\innprod{\nabla{f}(x_{k+1})}{x-x_{k+1}}+\varPsi_\sigma(x)]$.\label{est-seq-formula}
\State Output $\{x_{k+1},\psi_{k+1},M_k,L_{k+1},A_{k+1}\}$.
\end{algorithmic}
\end{algorithm}

\begin{remark}\label{rem:APG-iter-eval}
In one execution of the loop (Lines~\ref{APG-iter-loop-s}--\ref{APG-iter-loop-e}) in $\APGIter$, we have three evaluations of $\varphi(x)$ (at $x\in \{z,T_L(z),T_{L+\sigma}^\sigma(z)\}$), two evaluations of $\nabla{f}(x)$ (at $x \in \{y,z\}$), and three proximal operations $T_{L+\sigma}^\sigma(y),T_{L+\sigma}^\sigma(z),T_L(z)$. There is one proximal operation to compute $v_k$ outside the loop at Line~\ref{APG-iter-vk}. Remark that $v_0=x_0$ holds and,
by the recurrence formula for $\psi_k$ at Line~\ref{est-seq-formula},  $v_k$ for $k\geq 1$ can be computed as
\begin{equation}\label{APG-iter-v_k-formula}
v_k = \prox_{\gamma_k\varPsi}(w_k),\text{ where } \gamma_k = \frac{A_k}{1+\sigma A_k},~~w_k = x_0-\frac{1}{1+\sigma A_k}\sum_{i=1}^ka_i \nabla{f}(x_i).
\end{equation}
\end{remark}

\begin{remark}
Compared with the original Nesterov's method, there are small modifications for our development.
The update $L_{k+1}:=\max\{L_{\min},M_k/\gamma_{\rm dec}\}$ at Line~\ref{AGP-iter-upadte} is slightly different from the original one $L_{k+1}:=M_k/\gamma_{\rm dec}$ in \cite{Nes13}, which affects Lemma~\ref{lem:APG-iter-LS-compl}.
Moreover, we have additional computations of $T_L(z)$ and $T_{L+\sigma}^\sigma(z)$ at Line~\ref{APG-iter-add-prox} in order to test the conditions \eqref{APG-iter-test-b} and \eqref{APG-iter-test-c}.
Remark that, when $L=L_f$ is known, the computation of $T_{L+\sigma}^\sigma(z)$ at Line~\ref{APG-iter-add-prox} can be omitted as it is only used to check the second condition \eqref{APG-iter-test-b}.
We let the third criterion \eqref{APG-iter-test-c} optional; it is independent of the analysis of Algorithm~\ref{alg:AdaAPG}, while we need it in Algorihtm~\ref{alg:rAdaAPG}.
The first condition \eqref{APG-iter-test-a} is equivalent to the one in Nesterov's method (see the proof of Lemma~5 in \cite{Nes13}).
It will be verified in Lemmas~\ref{lem:APG-iter-L} and~\ref{lem:APG-iter-LS-compl} that our modification does not affect the original complexity analysis.
\end{remark}

\begin{lemma}\label{lem:APG-iter-L}
The condition $\eqref{APG-iter-test}$ of $\APGIter$ holds whenever $L\geq L_f$.
\end{lemma}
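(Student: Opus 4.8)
The plan is to verify the three parts of \eqref{APG-iter-test} one at a time, each reducing to a smoothness property of $f$ (or of the shifted function $f_\sigma$) combined with the already-established parts (iv) and (v) of Lemma~\ref{lem:gmap}. Throughout, I fix $L\geq L_f$ and abbreviate $T:=T_{L+\sigma}^\sigma(y)$.

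For the first condition \eqref{APG-iter-test-a}, the key fact is the co-coercivity of the gradient of a convex $L_f$-smooth function: for all $u,v\in\E$,
$$
\innprod{\nabla{f}(u)-\nabla{f}(v)}{u-v}\geq \frac{1}{L_f}\norm{\nabla{f}(u)-\nabla{f}(v)}^2.
$$
Applying this with $u:=y$ and $v:=T$, and then using $\frac{1}{L}\leq\frac{1}{L_f}$ (valid since $L\geq L_f$), gives
$$
\innprod{\nabla{f}(y)-\nabla{f}(T)}{y-T}\geq \frac{1}{L_f}\norm{\nabla{f}(y)-\nabla{f}(T)}^2 \geq \frac{1}{L}\norm{\nabla{f}(y)-\nabla{f}(T)}^2,
$$
which is exactly \eqref{APG-iter-test-a}.

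For the second condition \eqref{APG-iter-test-b}, I would apply Lemma~\ref{lem:gmap}(v) to the regularized objective $\varphi_\sigma=f_\sigma+\varPsi$, viewing $f_\sigma$ as the smooth part. Since $f_\sigma(x)=f(x)+\frac{\sigma}{2}\norm{x-x_0}^2$ is the sum of an $L_f$-smooth function and a function whose gradient is $\sigma$-Lipschitz, $f_\sigma$ is $(L_f+\sigma)$-smooth. Hence Lemma~\ref{lem:gmap}(v) (with $\varphi_\sigma,f_\sigma,m_L^\sigma,T_L^\sigma$ in place of $\varphi,f,m_L,T_L$) yields $\varphi_\sigma(T_{L'}^\sigma(z))\leq m_{L'}^\sigma(z;T_{L'}^\sigma(z))$ for every $L'\geq L_f+\sigma$. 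Taking $L':=L+\sigma$, the requirement $L+\sigma\geq L_f+\sigma$ is precisely $L\geq L_f$, giving \eqref{APG-iter-test-b}. For the optional condition \eqref{APG-iter-test-c}, I would apply Lemma~\ref{lem:gmap}(v) to the original $\varphi=f+\varPsi$ to obtain $\varphi(T_L(z))\leq m_L(z;T_L(z))$ for $L\geq L_f$; this is exactly the hypothesis of Lemma~\ref{lem:gmap}(iv), whose first inequality then gives $\frac{1}{2L}\norm{g_L(z)}^2\leq \varphi(z)-\varphi(T_L(z))$, and in particular $\varphi(T_L(z))\leq\varphi(z)$, as required.

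Since all three parts follow directly from co-coercivity and from Lemma~\ref{lem:gmap}, I do not anticipate a substantive obstacle; the only point demanding care is the bookkeeping of Lipschitz constants in \eqref{APG-iter-test-b}, namely matching the shifted modulus $L+\sigma$ used in $T_{L+\sigma}^\sigma$ and $m_{L+\sigma}^\sigma$ against the correct smoothness constant $L_f+\sigma$ of $f_\sigma$ rather than $L_f$. Getting this shift right is what makes the threshold come out to $L\geq L_f$ uniformly across the three conditions.
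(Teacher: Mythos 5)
Your proof is correct and follows essentially the same route as the paper: co-coercivity of $\nabla f$ (the content of the paper's citation \cite[Theorem 2.1.5]{Nes04}) for \eqref{APG-iter-test-a}, and Lemma~\ref{lem:gmap} (v) applied to $\varphi_\sigma=f_\sigma+\varPsi$ with smoothness constant $L_f+\sigma$ for \eqref{APG-iter-test-b}, and to $\varphi$ for \eqref{APG-iter-test-c}. The only cosmetic difference is in closing \eqref{APG-iter-test-c}: the paper uses the elementary bound $m_L(z;T_L(z))\leq\varphi(z)$ obtained by evaluating the minimum at $x=z$, whereas you detour through the sufficient-decrease inequality of Lemma~\ref{lem:gmap} (iv); both are valid.
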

\begin{proof}
The first condition \eqref{APG-iter-test-a} is satisfied since $f$ is $L$-smooth (e.g., see \cite[Theorem 2.1.5]{Nes04}).
Since $f_\sigma$ is $(L_f+\sigma)$-smooth, the second one \eqref{APG-iter-test-b} holds if $L\geq L_f$ by Lemma~\ref{lem:gmap} (v).
The third one \eqref{APG-iter-test-c} can be verified again by Lemma~\ref{lem:gmap} (v): If $L\geq L_f$, we have
$
\varphi(T_L(z)) \leq m_L(z;T_L(z)) \leq \varphi(z),
$
where the second inequality follows from
\begin{equation}\label{mL-leq-phi}
m_L(z;T_L(z))=\min_{x \in \E}\left\{f(z)+\innprod{\nabla{f}(z)}{x-z}+\varPsi(x)+\frac{L}{2}\norm{x-z}^2\right\} \stackrel{x=z}{\leq}
f(z)+\varPsi(z)=\varphi(z).
\end{equation}
\end{proof}

The following lemma is given in \cite[Lemma 6]{Nes13}, while we rewrite its proof due to the difference in the update of $L_{k+1}$.
\begin{lemma}\label{lem:APG-iter-LS-compl}
\sloppy 
Suppose that $L_k \leq \gamma_{\rm inc}L_f$ holds.
Then, $\APGIter$ ensures the conditions $M_k \leq \gamma_{\rm inc}L_f$ and ${L_{k+1} \leq \max\{L_{\min},\frac{\gamma_{\rm inc}}{\gamma_{\rm dec}}L_f\}\leq \gamma_{\rm inc}L_f}$.
The number of executions of the loop {\rm (Lines~\ref{APG-iter-loop-s}--\ref{APG-iter-loop-e})} is bounded by
$$
1 + \frac{\log \gamma_{\rm dec}}{\log \gamma_{\rm inc}} + \frac{1}{\log \gamma_{\rm inc}} \log \frac{L_{k+1}}{L_k}.
$$
\end{lemma}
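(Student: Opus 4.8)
The plan is to follow the value of $L$ through the backtracking loop and exploit Lemma~\ref{lem:APG-iter-L}. Before entering the loop, $\APGIter$ sets $L=L_k/\gamma_{\rm inc}$, and each pass first multiplies $L$ by $\gamma_{\rm inc}$; hence at the $j$-th execution the tested value is $L=L_k\gamma_{\rm inc}^{j-1}$. Since Lemma~\ref{lem:APG-iter-L} guarantees that the test \eqref{APG-iter-test} succeeds as soon as $L\geq L_f$, the loop terminates after finitely many passes. I denote by $M_k$ the accepted (terminal) value of $L$ and by $N$ the number of executions, so that $M_k=L_k\gamma_{\rm inc}^{N-1}$.

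First I would bound $M_k$ by a short case distinction. If the loop halts on its first pass, then $M_k=L_k\leq\gamma_{\rm inc}L_f$, which is exactly the standing hypothesis. If instead $N\geq 2$, then the previous candidate $M_k/\gamma_{\rm inc}$ failed \eqref{APG-iter-test}; by the contrapositive of Lemma~\ref{lem:APG-iter-L} this forces $M_k/\gamma_{\rm inc}<L_f$, i.e.\ $M_k<\gamma_{\rm inc}L_f$. Either way $M_k\leq\gamma_{\rm inc}L_f$. The claimed bound on $L_{k+1}=\max\{L_{\min},M_k/\gamma_{\rm dec}\}$ is then immediate: combining $M_k\leq\gamma_{\rm inc}L_f$ with the update rule gives $L_{k+1}\leq\max\{L_{\min},\tfrac{\gamma_{\rm inc}}{\gamma_{\rm dec}}L_f\}$, and since $L_{\min}\leq L_f$ and $\gamma_{\rm dec}\geq 1$ (while $\gamma_{\rm inc}>1$) this last quantity is at most $\gamma_{\rm inc}L_f$.

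Finally, I would count the passes. From $M_k=L_k\gamma_{\rm inc}^{N-1}$ one reads off $N=1+\log(M_k/L_k)/\log\gamma_{\rm inc}$ exactly. The update $L_{k+1}=\max\{L_{\min},M_k/\gamma_{\rm dec}\}\geq M_k/\gamma_{\rm dec}$ yields $M_k\leq\gamma_{\rm dec}L_{k+1}$, and substituting this into the logarithm and splitting $\log(\gamma_{\rm dec}L_{k+1}/L_k)=\log\gamma_{\rm dec}+\log(L_{k+1}/L_k)$ produces the stated bound $1+\frac{\log\gamma_{\rm dec}}{\log\gamma_{\rm inc}}+\frac{1}{\log\gamma_{\rm inc}}\log\frac{L_{k+1}}{L_k}$. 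The only genuinely new point relative to Nesterov's original argument is this last step: because the update contains the $\max$ with $L_{\min}$, the relation between $M_k$ and $L_{k+1}$ is only an inequality $M_k\leq\gamma_{\rm dec}L_{k+1}$ rather than the equality $M_k=\gamma_{\rm dec}L_{k+1}$ used in \cite{Nes13}. Fortunately this is precisely the direction needed for an upper bound, so the modification costs nothing; this is the step I expect to require the most care, though it is not hard.
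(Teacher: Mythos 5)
Your proposal is correct and follows essentially the same route as the paper's proof: the same case distinction ($N=1$ versus $N\geq 2$ via the contrapositive of Lemma~\ref{lem:APG-iter-L}) to bound $M_k$, and the same use of the inequality $L_{k+1}\geq M_k/\gamma_{\rm dec}$ (rather than Nesterov's equality) to count the loop executions. The point you flag as requiring care --- that the $\max$ with $L_{\min}$ only gives $M_k\leq\gamma_{\rm dec}L_{k+1}$, which is the direction needed for the upper bound --- is exactly the adjustment the paper makes to Nesterov's original argument.
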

\begin{proof}
\sloppy 
Let $n_k \geq 1$ be the number of inner loops so that ${M_k = L_k\gamma_{\rm inc}^{n_k-1}}$.
If $n_k=1$, then we have $M_k = L_k \leq \gamma_{\rm inc}L_f$. If $n_k>1$, then $M_k\leq \gamma_{\rm inc}L_f$ must hold because otherwise ${L_k\gamma_{\rm inc}^{n_k-2}=M_k/\gamma_{\rm inc} > L_f}$ and the $n_k$-th loop cannot occur (by Lemma~\ref{lem:APG-iter-L}). Hence, we conclude $M_k \leq \gamma_{\rm inc}L_f$ and also ${L_{k+1} = \max\{L_{\min},M_k/\gamma_{\rm dec}\} \leq \max\{L_{\min},\frac{\gamma_{\rm inc}}{\gamma_{\rm dec}}L_f\}}$.

Now, the relation $M_k = L_k\gamma_{\rm inc}^{n_k-1}$ implies
$
L_{k+1} = \max\{L_{\min},M_k/\gamma_{\rm dec}\} \geq \frac{1}{\gamma_{\rm dec}}L_k\gamma_{\rm inc}^{n_k-1}.
$
Then, we have $(n_k-1)\log\gamma_{\rm inc} \leq \log \frac{\gamma_{\rm dec}L_{k+1}}{L_k} = \log \gamma_{\rm dec} + \log \frac{L_{k+1}}{L_k}$ and therefore
$
n_k \leq  1 + \frac{\log \gamma_{\rm dec}}{\log \gamma_{\rm inc}} + \frac{1}{\log \gamma_{\rm inc}} \log \frac{L_{k+1}}{L_k} .
$
\end{proof}

The complexity estimate of Nesterov's method is given as follows.

\begin{proposition}\label{prop:APG}
Let $\{x_k,\psi_k,M_k,L_k,A_k\}$ be generated by the accelerated proximal gradient method applied to the regularized objective function \eqref{realloc}, that is,
$$\{x_{k+1},\psi_{k+1},M_k,L_{k+1},A_{k+1}\} \leftarrow \APGIter_\sigma(x_k,\psi_k,L_k,A_k),\quad k=0,1,2,\ldots,$$
with the initialization $x_0 \in \E$, $\psi_0(x)=\frac{1}{2}\norm{x-x_0}^2$, $A_0=0$, $L_0>0$.
\begin{enumerate}[(i)]
\item
If $L_0 \in [L_{\min}, \gamma_{\rm inc}L_f]$, then
we have $L_{\min} \leq L_k \leq M_k \leq \gamma_{\rm inc}L_f$ for all $k \geq 0$.
The total number of executions of the loop {\rm (Lines~\ref{APG-iter-loop-s}--\ref{APG-iter-loop-e})} until the $k$-th iteration is bounded by 
$$
\left[
1+\frac{\log \gamma_{\rm dec}}{\log \gamma_{\rm inc}}\right](k+1) + \frac{1}{\log \gamma_{\rm inc}}\log\frac{L_{k+1}}{L_0}.
$$
\item For each $k\geq 1$, we have
$
A_k \geq \frac{2}{\gamma_{\rm inc}L_f}\left[1+\sqrt{\frac{\sigma}{2\gamma_{\rm inc}L_f}}\right]^{2(k-1)}.
$
\item $\varphi_\sigma(x_k)-\varphi_\sigma^* \leq \frac{\norm{x_0-x_\sigma^*}^2}{2A_k}$ holds for all $k\geq 1$.
\item $\varphi(x_k)\leq \varphi(x_0)$ holds for all $k\geq 1$.
\item
For every $k\geq 1$, we have
$$
\norm{x_k-x_0} \leq \left(1+\frac{1}{\sqrt{\sigma A_k}}\right)\dist(x_0,X^*),
$$
$$
\norm{g_{M_{k-1}}(x_k)} \leq \norm{g_{M_{k-1}+\sigma}^\sigma(x_k)} + \sigma\norm{x_k-x_0}
\leq \left(2\sqrt{\frac{M_{k-1}+\sigma}{A_k}} + \sigma\right)\dist(x_0,X^*).
$$
\end{enumerate}
\end{proposition}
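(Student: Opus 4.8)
The plan is to handle parts (i)--(v) in a logical rather than a listed order, deriving (iv) and (v) from the core convergence estimate (iii) and isolating the growth rate (ii) as a self-contained recursion.

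For (i), I would argue by induction on $k$. Since $L_0 \le \gamma_{\rm inc}L_f$, Lemma~\ref{lem:APG-iter-LS-compl} yields $M_0 \le \gamma_{\rm inc}L_f$ and $L_1 \le \gamma_{\rm inc}L_f$; feeding $L_1 \le \gamma_{\rm inc}L_f$ back into the same lemma propagates $M_k, L_{k+1} \le \gamma_{\rm inc}L_f$ for every $k$. The lower bound $L_k \ge L_{\min}$ is built into the update $L_{k+1}=\max\{L_{\min}, M_k/\gamma_{\rm dec}\}$, and $L_k \le M_k$ holds because the loop starts at $L=L_k$, so $M_k = L_k\gamma_{\rm inc}^{n_k-1}$ with $n_k \ge 1$. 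The total loop count follows by summing the per-iteration bound of Lemma~\ref{lem:APG-iter-LS-compl} over $j=0,\dots,k$: the constant terms $1+\log\gamma_{\rm dec}/\log\gamma_{\rm inc}$ contribute the factor $(k+1)$, while the terms $\frac{1}{\log\gamma_{\rm inc}}\log(L_{j+1}/L_j)$ telescope to $\frac{1}{\log\gamma_{\rm inc}}\log(L_{k+1}/L_0)$.

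For (iii), which is the heart of the matter, I would reproduce Nesterov's estimate-sequence argument for the objective $\varphi_\sigma = f+\varPsi_\sigma$. The two invariants to maintain are (a) $\psi_k(x) \le A_k\varphi_\sigma(x) + \frac12\norm{x-x_0}^2$ for all $x$, and (b) $A_k\varphi_\sigma(x_k) \le \psi_k^* := \min_x\psi_k(x)$. Invariant (a) is immediate: by convexity of $f$ each linearization in the recurrence for $\psi_k$ (Line~\ref{est-seq-formula}) underestimates $f$, while $\varPsi_\sigma$ carries total weight $\sum_{i=1}^k a_i = A_k$ and $\psi_0=\frac12\norm{\cdot-x_0}^2$. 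Invariant (b) is the genuinely delicate step, proved by induction using the choice of $a_{k+1}$ at Line~\ref{a-quad-eq} together with the line-search conditions \eqref{APG-iter-test-a} and \eqref{APG-iter-test-b}; this is exactly \cite{Nes13}, and I would note that our two modifications (the clipped update of $L_{k+1}$ and the extra proximal evaluations) do not enter this argument, so it transfers verbatim. Combining (a) and (b) at $x=x_\sigma^*$ gives $A_k(\varphi_\sigma(x_k)-\varphi_\sigma^*) \le \psi_k^* - A_k\varphi_\sigma^* \le \psi_k(x_\sigma^*)-A_k\varphi_\sigma^* \le \frac12\norm{x_\sigma^*-x_0}^2$, which is (iii). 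As a byproduct, evaluating (a) at $x=x_0$ (where $\psi_0(x_0)=0$) gives $\psi_k^* \le \psi_k(x_0) \le A_k\varphi_\sigma(x_0)$, whence $\varphi_\sigma(x_k)\le\varphi_\sigma(x_0)=\varphi(x_0)$, and Lemma~\ref{lem:reg}(i) delivers (iv).

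For (ii), I would work only from the defining relation $a_{k+1}^2/A_{k+1} = 2(1+\sigma A_k)/M_k$ (Line~\ref{a-quad-eq}, with $A_{k+1}=A_k+a_{k+1}$) and the bound $M_k\le\gamma_{\rm inc}L_f$ from (i). Writing $a_{k+1}=A_{k+1}-A_k$ and using $(\sqrt{A_{k+1}}+\sqrt{A_k})^2 \le 4A_{k+1}$ gives $(\sqrt{A_{k+1}}-\sqrt{A_k})^2 \ge \frac14\cdot 2(1+\sigma A_k)/(\gamma_{\rm inc}L_f) \ge \sigma A_k/(2\gamma_{\rm inc}L_f)$, so $\sqrt{A_{k+1}} \ge (1+\sqrt{\sigma/(2\gamma_{\rm inc}L_f)})\sqrt{A_k}$; iterating from the base case $A_1=a_1=2/M_0 \ge 2/(\gamma_{\rm inc}L_f)$ (since $A_0=0$) yields the stated bound. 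Finally, for (v) I would use strong convexity of $\varphi_\sigma$ with (iii): $\frac\sigma2\norm{x_k-x_\sigma^*}^2 \le \varphi_\sigma(x_k)-\varphi_\sigma^* \le \norm{x_0-x_\sigma^*}^2/(2A_k)$, so $\norm{x_k-x_\sigma^*}\le\norm{x_0-x_\sigma^*}/\sqrt{\sigma A_k}$, and the triangle inequality with Lemma~\ref{lem:reg}(ii) gives the first bound. For the second, monotonicity of $L\mapsto\norm{g_L(x_k)}$ (Lemma~\ref{lem:gmap}(i)) and Lemma~\ref{lem:reg}(iii) yield $\norm{g_{M_{k-1}}(x_k)} \le \norm{g_{M_{k-1}+\sigma}(x_k)} \le \norm{g_{M_{k-1}+\sigma}^\sigma(x_k)}+\sigma\norm{x_k-x_0}$; condition \eqref{APG-iter-test-b} at the iteration producing $x_k$ is precisely the hypothesis of Lemma~\ref{lem:gmap}(iv) for $\varphi_\sigma$ at $y=x_k$, $L=M_{k-1}+\sigma$, so with (iii) and Lemma~\ref{lem:reg}(ii) one gets $\norm{g_{M_{k-1}+\sigma}^\sigma(x_k)} \le \sqrt{(M_{k-1}+\sigma)/A_k}\,\dist(x_0,X^*)$, and bounding $\sigma\norm{x_k-x_0}$ by the first part together with $\sqrt{\sigma/A_k}\le\sqrt{(M_{k-1}+\sigma)/A_k}$ produces the factor $2$ and closes the estimate. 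The main obstacle is invariant (b) of (iii); everything else is a careful but essentially routine assembly of the cited lemmas.
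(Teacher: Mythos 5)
Your proposal is correct, and for parts (i), (iii), (iv), and (v) it follows essentially the paper's own route: the paper also obtains (iii) and (iv) from the two estimate-sequence relations $A_k\varphi_\sigma(x_k)\leq\min_{x}\psi_k(x)$ and $\psi_k(x)\leq A_k\varphi_\sigma(x)+\frac12\norm{x-x_0}^2$ (cited as Lemma~7 of \cite{Nes13}; these are exactly your invariants (b) and (a), evaluated at $x=x_\sigma^*$ and $x=x_0$), and its proof of (v) is the same as yours, including the use of condition \eqref{APG-iter-test-b} to invoke Lemma~\ref{lem:gmap}~(iv) for $\varphi_\sigma$ at $y=x_k$, $L=M_{k-1}+\sigma$, and the final bound $\sqrt{\sigma/A_k}\leq\sqrt{(M_{k-1}+\sigma)/A_k}$ that produces the factor $2$. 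The one genuine divergence is part (ii): the paper disposes of it by citing Lemma~8 of \cite{Nes13} applied to $\varphi_\sigma=f+\varPsi_\sigma$ (noting $\varPsi_\sigma$ is $\sigma$-strongly convex), whereas you re-derive the growth rate from scratch, and your derivation checks out: from $a_{k+1}^2/A_{k+1}=2(1+\sigma A_k)/M_k$, the bound $M_k\leq\gamma_{\rm inc}L_f$ from (i), and $\left(\sqrt{A_{k+1}}+\sqrt{A_k}\right)^2\leq 4A_{k+1}$ (valid since $a_{k+1}>0$), one indeed gets $\sqrt{A_{k+1}}\geq\left(1+\sqrt{\sigma/(2\gamma_{\rm inc}L_f)}\right)\sqrt{A_k}$, and the base case $A_1=a_1=2/M_0\geq 2/(\gamma_{\rm inc}L_f)$ closes the induction. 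What your version buys is transparency: it makes explicit that (ii) depends only on the defining equation for $a_{k+1}$ and the uniform bound on $M_k$, so the paper's modifications to Nesterov's scheme (the clipped update of $L_{k+1}$, the extra proximal computations, the optional test \eqref{APG-iter-test-c}) manifestly cannot degrade the rate — a point the paper leaves implicit behind the citation. The trade-off is that you still defer the genuinely delicate step, your invariant (b), to \cite{Nes13}, so the proof is no more self-contained where it matters most; but that is precisely the same concession the paper makes.
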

\begin{proof}
(i) 
$L_{\min}\leq L_k\leq M_k$ is clear by the construction. $M_k\leq \gamma_{\rm inc}L_f$
is obtained by applying Lemma~\ref{lem:APG-iter-LS-compl} inductively.

Since $\varPsi_\sigma$ is $\sigma$-strongly convex, 
(ii) follows by \cite[Lemma~8]{Nes13} applied to the objective function $\varphi_\sigma=f+\varPsi_\sigma$.

According to \cite[Lemma 7]{Nes13}, the following relations hold for all $k\geq 0$:
$$
\left\{
\begin{array}{l}
A_k\varphi_\sigma(x_k) \leq \min_{x \in \E}\psi_k(x),\\
\psi_k(x)\leq A_k\varphi_\sigma(x)+\frac{1}{2}\norm{x-x_0}^2,\quad \forall x \in \E.
\end{array}
\right.
$$
Combining them, we obtain
$$
A_k\varphi_\sigma(x_k) \leq \min_{x\in \E}\left\{ A_k\varphi_\sigma(x)+\frac{1}{2}\norm{x-x_0}^2 \right\}, \quad \forall k\geq 0.
$$
Taking $x=x_\sigma^*$ on the right hand side, we obtain (iii).
On the other hand, taking $x=x_0$ yields ${\varphi_\sigma(x_k)\leq \varphi_\sigma(x_0)}$. 
Then, Lemma~\ref{lem:reg} (i) gives the assertion $\varphi(x_k)\leq \varphi(x_0)$.

(v)
By the $\sigma$-strong convexity of $\varphi_\sigma$ and using (iii), we have
$
\frac{\sigma}{2}\norm{x_k-x_\sigma^*}^2 \leq \varphi_\sigma(x_k)-\varphi_\sigma^* \leq \frac{\norm{x_0 -x_\sigma^*}^2}{2A_k},
$
and thus
$
\norm{x_k-x_\sigma^*} \leq \frac{1}{\sqrt{\sigma A_k}}\norm{x_0-x_\sigma^*}.
$
This shows the former inequality of (v):
$$
\norm{x_k-x_0}\leq \norm{x_k-x_\sigma^*} + \norm{x_0-x_\sigma^*} \leq \left(1+\frac{1}{\sqrt{\sigma A_k}}\right) \norm{x_0-x_\sigma^*}
\leq
\left(1+\frac{1}{\sqrt{\sigma A_k}}\right) \dist(x_0,X^*),
$$
where the last inequality follows by Lemma~\ref{lem:reg} (ii).

Since $\varphi_\sigma(T_{M_{k-1}+\sigma}^\sigma(x_k)) \leq m_{M_{k-1}+\sigma}^\sigma(x_k;T_{M_{k-1}+\sigma}^\sigma(x_k))$ holds by \eqref{APG-iter-test-b}, Lemma~\ref{lem:gmap} (iv) yields
$$
\frac{1}{2(M_{k-1}+\sigma)}\norm{g_{M_{k-1}+\sigma}^\sigma(x_k)}^2
\leq \varphi_\sigma(x_k)-\varphi_\sigma^* \leq \frac{\norm{x_0-x_\sigma^*}^2}{2A_k}
\leq \frac{\dist(x_0,X^*)^{2}}{2A_k}.
$$
Thus, $\norm{g_{M_{k-1}+\sigma}^\sigma(x_k)} \leq \sqrt{\frac{M_{k-1}+\sigma}{A_k}}\dist(x_0,X^*)$ holds.
Consequently, using Lemma~\ref{lem:gmap}~(i) and Lemma~\ref{lem:reg}~(iii), we obtain
\begin{align*}
\norm{g_{M_{k-1}}(x_k)} &\leq \norm{g_{M_{k-1}+\sigma}(x_k)}
\leq \norm{g_{M_{k-1}+\sigma}^\sigma(x_k)} + \sigma\norm{x_k - x_0}
\\&\leq  \sqrt{\frac{M_{k-1}+\sigma}{A_k}}\dist(x_0,X^*) + \sigma\left(1+\frac{1}{\sqrt{\sigma A_k}}\right)\dist(x_0,X^*)
\\&\leq \left(2\sqrt{\frac{M_{k-1}+\sigma}{A_k}} + \sigma\right)\dist(x_0,X^*).
\end{align*}
\end{proof}

\subsection{Proximal Gradient Method}

We end this section 
by presenting Algorithm~\ref{alg:PG-iter}, a basic proximal gradient descent with a backtracking strategy to estimate $L_f$ \cite[Algorithm (3.3)]{Nes13}, which will be used in the initialization of our method.
The method consists of evaluations of $\varphi(x)$ at $x \in \{x_k,T_L(x_k)\}$ (which can be omitted if $L_f$ is known), one gradient evaluation  $\nabla{f}(x_k)$, and a proximal operation $T_L(x_k)$, for each guess $L$ of the Lipschitz constant.

\begin{algorithm}
\caption{
Proximal Gradient Iteration
\newline
$\{T_{M_k}(x_k), M_k, L_{k+1}\} \leftarrow \PGIter(x_k,L_k)$
}
\label{alg:PG-iter}
\textbf{Parameters:} $\gamma_{\rm inc}>1$, $\gamma_{\rm dec}\geq 1$, $L_{\min} \in ]0,L_f]$.\\
\textbf{Input:} $x_k \in \E$, $L_k > 0$.
\begin{algorithmic}[1]
\State Set $L:=L_k/\gamma_{\rm inc}$.
\Repeat \label{PG-iter-loop-s}
\State Set $L:=\gamma_{\rm inc}L$.
\State Compute $T_L(x_k)$.
\Until{the following condition holds:} \label{PG-iter-loop-e}
\begin{equation}\label{PG-iter-test}
	\varphi(T_L(x_k))\leq m_L(x_k;T_L(x_k)).
\end{equation}
\State Define $M_k:=L$, $L_{k+1}:=\max\{L_{\min},M_k/\gamma_{\rm dec}\}$.
\State Output $\{T_{M_k}(x_k), M_k, L_{k+1}\}$.
\end{algorithmic}
\end{algorithm}

\begin{lemma}\label{lem:PG-iter}
Let $\{T_{M_k}(x_k), M_k, L_{k+1}\}$ be given by $\PGIter(x_k,L_k)$. Then, the following assertions hold.
\begin{enumerate}[(i)]
\item $\varphi(T_{M_k}(x_k))\leq \varphi(x_k)$ holds.
\item
If $L_k \leq \gamma_{\rm inc}L_f$, then we have $M_k \leq \gamma_{\rm inc}L_f$ and $L_{k+1} \leq \max\{L_{\min},$ $\frac{\gamma_{\rm inc}}{\gamma_{\rm dec}}L_f\} \leq \gamma_{\rm inc}L_f$.
Moreover, the number of executions of the loop {\rm (Lines~\ref{PG-iter-loop-s}--\ref{PG-iter-loop-e})} is bounded by
$$
 1 + \frac{\log \gamma_{\rm dec}}{\log \gamma_{\rm inc}} + \frac{1}{\log \gamma_{\rm inc}} \log \frac{L_{k+1}}{L_k}.
$$
\end{enumerate}
\end{lemma}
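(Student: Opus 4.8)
The plan is to treat the two parts separately, both following closely the line-search analysis already carried out for $\APGIter$ in Lemmas~\ref{lem:APG-iter-L} and~\ref{lem:APG-iter-LS-compl}, since $\PGIter$ is merely its non-accelerated counterpart using the same stopping test \eqref{PG-iter-test}.

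For (i), I would simply combine the termination criterion with the elementary upper bound on $m_L$ already recorded in \eqref{mL-leq-phi}. By construction the loop exits only once \eqref{PG-iter-test} holds, i.e. $\varphi(T_{M_k}(x_k)) \leq m_{M_k}(x_k;T_{M_k}(x_k))$. On the other hand, evaluating the minimization defining $m_{M_k}(x_k;\cdot)$ at the point $x=x_k$ gives $m_{M_k}(x_k;T_{M_k}(x_k)) \leq f(x_k)+\varPsi(x_k)=\varphi(x_k)$, exactly as in \eqref{mL-leq-phi}. Chaining these two inequalities yields $\varphi(T_{M_k}(x_k))\leq \varphi(x_k)$.

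For (ii), I would reproduce the argument of Lemma~\ref{lem:APG-iter-LS-compl} almost verbatim, the only input being Lemma~\ref{lem:gmap}~(v) in place of Lemma~\ref{lem:APG-iter-L}. Let $n_k\geq 1$ denote the number of loop executions, so that $M_k = L_k\gamma_{\rm inc}^{n_k-1}$. If $n_k=1$ then $M_k=L_k\leq \gamma_{\rm inc}L_f$ by hypothesis. If $n_k>1$, the penultimate trial value $M_k/\gamma_{\rm inc}=L_k\gamma_{\rm inc}^{n_k-2}$ failed the test \eqref{PG-iter-test}; since Lemma~\ref{lem:gmap}~(v) guarantees that \eqref{PG-iter-test} holds whenever $L\geq L_f$, this forces $M_k/\gamma_{\rm inc}<L_f$, hence $M_k\leq \gamma_{\rm inc}L_f$. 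The bound on $L_{k+1}$ then follows from its definition $L_{k+1}=\max\{L_{\min},M_k/\gamma_{\rm dec}\}$ together with $L_{\min}\leq L_f$ and $\gamma_{\rm dec}\geq 1$.

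Finally, for the loop count I would use the relation $L_{k+1}=\max\{L_{\min},M_k/\gamma_{\rm dec}\}\geq M_k/\gamma_{\rm dec}=\frac{1}{\gamma_{\rm dec}}L_k\gamma_{\rm inc}^{n_k-1}$; rearranging and taking logarithms gives $(n_k-1)\log\gamma_{\rm inc}\leq \log\gamma_{\rm dec}+\log\frac{L_{k+1}}{L_k}$, which is precisely the claimed bound. I expect essentially no obstacle here: the statement is a direct transcription of the accelerated line-search analysis, so the only point that needs checking is that the non-accelerated test \eqref{PG-iter-test} coincides exactly with the hypothesis of Lemma~\ref{lem:gmap}~(v), which secures the same termination guarantee and hence the same monotone growth argument on $M_k$.
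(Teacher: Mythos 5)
Your proof is correct and follows essentially the same route as the paper: part (i) chains the termination test \eqref{PG-iter-test} with the bound \eqref{mL-leq-phi}, and part (ii) transcribes the line-search analysis of Lemma~\ref{lem:APG-iter-LS-compl}, with Lemma~\ref{lem:gmap}~(v) supplying the guarantee that the test passes once $L\geq L_f$ --- exactly what the paper's one-line proof appeals to.
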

\begin{proof}
Since the condition \eqref{PG-iter-test} ensures
$\varphi(T_{M_k}(x_k))\leq m_{M_k}(x_k;T_{M_k}(x_k))\leq \varphi(x_k)$
(recall \eqref{mL-leq-phi} for the second inequality), we conclude (i).
The assertion (ii) can be verified in the same way as Lemma~\ref{lem:APG-iter-LS-compl}.
\end{proof}


\section{Adaptive Proximal Gradient Methods}\label{sec:AdaAPG}

In this section, we propose an adaptive proximal gradient method (Algorithm~\ref{alg:AdaAPG}) and its restart scheme (Algorithm~\ref{alg:rAdaAPG}). We show that these two are nearly optimal for the classes $\mathcal{F}(x_0,R,L)$ and $\mathcal{F}(x_0,R,L,\kappa,\rho)$, respectively.


\subsection{Adaptive Determination of the Regularization Parameter}\label{ssec:AdaAPG}

\begin{algorithm}
\caption{
Adaptive Accelerated Proximal Gradient Method
\newline
$\AdaAPG(x_0,L_{-1},\sigma_0,\ep)$
}
\label{alg:AdaAPG}
\textbf{Parameters:} $\gamma_{\rm inc}>1$, $\gamma_{\rm dec}\geq 1$, $L_{\rm min}\in ]0,L_f]$, $\gamma_{\rm reg}>1$, $\beta\in]0,1]$.\\
\textbf{Input:} $x_0 \in \E$, $L_{-1} \in [L_{\min}, \gamma_{\rm inc}L_f]$, $\sigma_0>0$, $\ep>0$.
\begin{algorithmic}[1]
\State $L_{0}:= L_{-1}$.
\For{$j=0,1,2,\ldots$}
	\State $\sigma_j := \sigma_0/\gamma_{\rm reg}^j$.
	\State $\psi_{0}(x):=\frac{1}{2}\norm{x-x_{0}}^2$, $A_{0}:=0$.
	\Repeat{ \textbf{for} $k=0,1,2,\ldots$}
		\State $\{x_{k+1},\psi_{k+1},M_{k},L_{k+1},A_{k+1}\} \leftarrow \APGIter_{\sigma_j}(x_{k},\psi_{k},L_{k},A_{k})$.
		\If{$\norm{g_{M_{k}}(x_{k+1})} \leq \ep$}
			\State output $\{\sigma_j, x_{k+1},T_{M_{k}}(x_{k+1}),M_{k},L_{k+1}\}$ and terminate the algorithm.
		\EndIf
	\Until{$A_{k+1} \geq \frac{2(M_{k}+\sigma_j)}{\beta^2\sigma_j^2}$}.\label{A-growth}
\State $L_{0} := L_{k+1}$.
\EndFor
\end{algorithmic}
\end{algorithm}

For solving \eqref{main-prob} under the measure $\norm{g_L(x)}$, we propose the adaptive accelerated proximal gradient method $\AdaAPG$ shown in Algorithm~\ref{alg:AdaAPG}, which can be seen as a simple extension of the regularization technique \cite{Nes12} introducing a guess-and-check procedure to adapt the regularization parameter $\sigma$.
The $j$-th outer loop of $\AdaAPG$ corresponds to applying Nesterov's accelerated proximal gradient method to the regularized problem $\min_{x\in \E} \varphi_{\sigma_j}(x)$, where $\sigma_j=\sigma_0/\gamma_{\rm reg}^j$~($\gamma_{\rm reg}>1$). We stop Nesterov's method when it successfully finds an $\ep$-solution or when it iterates excessively as detected by the growth condition of $A_{k+1}$ at Line~\ref{A-growth}. The growth of $A_{k+1}$ is used as the criterion that the current guess of $\sigma_j$ is not desirable, and then we restart Nesterov's method reallocating the regularization parameter as $\sigma_{j+1}:=\sigma_j/\gamma_{\rm reg}$. The proposed method involves the parameter $\beta\in]0,1]$, which controls the timing when $\sigma_j$ should be decreased;
smaller $\beta$ imposes larger $A_{k+1}$ in the condition at Line~\ref{A-growth}, and then we examine the current guess $\sigma_j$ for a longer time.

The next lemma shows what happens if an outer loop is completed without the termination of the algorithm.

\begin{lemma}\label{lem:AdaAPG-loop}
In $\AdaAPG$,
suppose that a $j$-th outer loop is completed satisfying the criterion $A_{k+1}\geq \frac{2(M_{k}+\sigma_j)}{\beta^2\sigma_j^2}$ for some $k\geq 0$. Then, we have the inequalities
$$
\norm{x_{k+1}-x_0} \leq \left(1+\frac{\beta}{\sqrt{2}}\right)\dist(x_0,X^*),
\qquad
\norm{g_{M_{k}}(x_{k+1})} \leq (1+\sqrt{2}\beta)\sigma_j\dist(x_0,X^*).
$$
Moreover, the number of inner iterations is bounded as follows.
$$
k+1 \leq 2+ \left(\sqrt{\frac{2\gamma_{\rm inc}L_f}{\sigma_j}}+1\right)\log\frac{\gamma_{\rm inc}L_f+\sigma_j}{\beta\sigma_j}.
$$
\end{lemma}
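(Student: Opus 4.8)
The plan is to read off the first two inequalities by feeding the exit criterion into Proposition~\ref{prop:APG}(v), and to obtain the iteration count from the geometric growth of $A_k$ in Proposition~\ref{prop:APG}(ii). Throughout I rely on the fact that the outer-loop initialization keeps $L_0\in[L_{\min},\gamma_{\rm inc}L_f]$: for $j=0$ this is the input assumption on $L_{-1}$, and for $j\geq1$ it holds because $L_0=L_{k+1}$ from the previous loop lies in this range by Proposition~\ref{prop:APG}(i). Hence Proposition~\ref{prop:APG} applies with $\sigma=\sigma_j$, and part~(v) taken at index $k+1$ gives
\[
\norm{x_{k+1}-x_0}\leq\Bigl(1+\tfrac{1}{\sqrt{\sigma_j A_{k+1}}}\Bigr)\dist(x_0,X^*),\qquad
\norm{g_{M_k}(x_{k+1})}\leq\Bigl(2\sqrt{\tfrac{M_k+\sigma_j}{A_{k+1}}}+\sigma_j\Bigr)\dist(x_0,X^*).
\]

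For the first bound, the exit criterion $A_{k+1}\geq\frac{2(M_k+\sigma_j)}{\beta^2\sigma_j^2}$ together with $M_k\geq L_{\min}>0$ yields $\sigma_j A_{k+1}\geq\frac{2(M_k+\sigma_j)}{\beta^2\sigma_j}>\frac{2}{\beta^2}$, so $\frac{1}{\sqrt{\sigma_j A_{k+1}}}\leq\frac{\beta}{\sqrt2}$. For the second, the same criterion gives $\frac{M_k+\sigma_j}{A_{k+1}}\leq\frac{\beta^2\sigma_j^2}{2}$, hence $2\sqrt{\frac{M_k+\sigma_j}{A_{k+1}}}\leq\sqrt2\,\beta\sigma_j$. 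Substituting these two estimates produces the two claimed inequalities directly.

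The iteration count is the main point. Setting $t:=\sqrt{\frac{\sigma_j}{2\gamma_{\rm inc}L_f}}$ and using $M_k\leq\gamma_{\rm inc}L_f$ from Proposition~\ref{prop:APG}(i), the exit test is certainly triggered once the stronger inequality $A_{k+1}\geq\frac{2(\gamma_{\rm inc}L_f+\sigma_j)}{\beta^2\sigma_j^2}$ holds, so I would bound the first index $k$ at which the latter is met. The geometric lower bound $A_{k+1}\geq\frac{2}{\gamma_{\rm inc}L_f}(1+t)^{2k}$ from Proposition~\ref{prop:APG}(ii) reduces this to $(1+t)^{2k}\geq\frac{\gamma_{\rm inc}L_f(\gamma_{\rm inc}L_f+\sigma_j)}{\beta^2\sigma_j^2}$. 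Since the loop must exit at or before the smallest integer $k$ satisfying this, accounting for the ceiling gives $k+1\leq 2+\frac{1}{2\log(1+t)}\log\frac{\gamma_{\rm inc}L_f(\gamma_{\rm inc}L_f+\sigma_j)}{\beta^2\sigma_j^2}$.

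It remains to recast this into the stated form, which is where the only real care is needed. Two elementary estimates finish the job: the inequality $\log(1+t)\geq\frac{t}{1+t}$ gives $\frac{1}{\log(1+t)}\leq 1+\frac1t=1+\sqrt{\frac{2\gamma_{\rm inc}L_f}{\sigma_j}}$, while $\frac12\log\frac{\gamma_{\rm inc}L_f(\gamma_{\rm inc}L_f+\sigma_j)}{\beta^2\sigma_j^2}\leq\log\frac{\gamma_{\rm inc}L_f+\sigma_j}{\beta\sigma_j}$ follows from $\gamma_{\rm inc}L_f\leq\gamma_{\rm inc}L_f+\sigma_j$. Combining them yields exactly $k+1\leq 2+\bigl(\sqrt{\frac{2\gamma_{\rm inc}L_f}{\sigma_j}}+1\bigr)\log\frac{\gamma_{\rm inc}L_f+\sigma_j}{\beta\sigma_j}$. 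The delicate bookkeeping is to pin down the additive constant—the ceiling in the smallest valid $k$ contributes one unit and passing from $k$ to the iteration count $k+1$ contributes another, giving $2$ rather than $1$—and to ensure $M_k\leq\gamma_{\rm inc}L_f$ holds uniformly along the loop; both are immediate from Proposition~\ref{prop:APG}(i)--(ii).
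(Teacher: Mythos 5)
Your proposal is correct and takes essentially the same approach as the paper: the first two inequalities come from Proposition~\ref{prop:APG}(v) combined with the exit criterion, and the iteration bound from the geometric growth in Proposition~\ref{prop:APG}(ii), the inequality $\log(1+t)\geq \frac{t}{1+t}$, and the final relaxation $\gamma_{\rm inc}L_f\leq \gamma_{\rm inc}L_f+\sigma_j$. The only difference is bookkeeping for the additive constant $2$: the paper uses that the criterion failed at step $k-1$ (so $A_k$ lies below the threshold), while you use a first-crossing argument with a ceiling; both yield the identical bound.
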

\begin{proof}
The bounds on $\norm{x_{k+1}-x_0}$ and $\norm{g_{M_{k}}(x_{k+1})}$ can be obtained by Proposition~\ref{prop:APG} (v), applying the assumption $A_{k+1}\geq \frac{2(M_{k}+\sigma_j)}{\beta^2\sigma_j^2}$.
To show the bound on $k+1$, suppose $k\geq 1$ (the result is clear when $k=0$).
By the assumption on $k$ and Proposition~\ref{prop:APG}~(i), we have
$$A_{k} < \frac{2(M_{k-1}+\sigma_j)}{\beta^2\sigma_j^2} \leq \frac{2(\gamma_{\rm inc}L_f+\sigma_j)}{\beta^2\sigma_j^2}.$$
On the other hand, Proposition~\ref{prop:APG} (ii) implies
$$
A_{k} \geq  \frac{2}{\gamma_{\rm inc}L_f}\left[1+\sqrt{\frac{\sigma_j}{2\gamma_{\rm inc}L_f}}\right]^{2(k-1)} \geq
 \frac{2}{\gamma_{\rm inc}L_f}\exp\left(2(k-1) \frac{1}{\sqrt{\frac{2\gamma_{\rm inc}L_f}{\sigma_j}}+1}\right),
$$
where the second inequality is due to the fact\footnote{
In fact, since the derivative $\log(1+x)$ of the function $h(x):=(1+x)\log(1+x)-x$ is increasing and vanishes at $x=0$, we have $\min_{x>-1}h(x)=h(0)=0$.
}
$1+x \geq \exp\frac{x}{1+x}~(\forall x > -1)$.
Therefore,
\begin{align*}
\frac{2}{\gamma_{\rm inc}L_f}\exp\left(2(k-1) \frac{1}{\sqrt{\frac{2\gamma_{\rm inc}L_f}{\sigma_j}}+1}\right) \leq \frac{2(\gamma_{\rm inc}L_f+\sigma_j)}{\beta^2\sigma_j^2}\\
\Longrightarrow
k+1 \leq 2+ \frac{1}{2}\left(\sqrt{\frac{2\gamma_{\rm inc}L_f}{\sigma_j}}+1\right)\log\frac{\gamma_{\rm inc}L_f(\gamma_{\rm inc}L_f+\sigma_j)}{\beta^2\sigma_j^2}.
\end{align*}
The assertion follows by relaxing $\gamma_{\rm inc}L_f \leq \gamma_{\rm inc}L_f + \sigma_j$.
\end{proof}

In view of Proposition~\ref{prop:APG} (i) and Remark~\ref{rem:APG-iter-eval}, the total number of executions of $\APGIter$, say $N$, determines the iteration complexity of $\AdaAPG$. For instance, the total number of evaluations of $\nabla{f}(\cdot)$ in $\AdaAPG$ is bounded by
$$
2\left[
1+\frac{\log \gamma_{\rm dec}}{\log \gamma_{\rm inc}}\right]N + \frac{2}{\log \gamma_{\rm inc}}\log\frac{\gamma_{\rm inc}L_f}{L_{-1}}.
$$

\begin{theorem}\label{thm:AdaAPG}
In $\AdaAPG$, let $N$ be the total number of executions of $\APGIter$.
Denote
\begin{equation}\label{sigma-th}
\sigma(x_0,\ep):=\frac{\ep}{(1+\sqrt{2}\beta)\dist(x_0,X^*)},
\end{equation}
where we let $\sigma(x_0,\ep):=+\infty$ when $x_0 \in X^*$.
Then, the following assertions hold.
\begin{enumerate}[(i)]
\item
The algorithm terminates at the $j$-th outer loop whenever
$\sigma_j \leq \sigma(x_0,\ep)$.
Moreover, we have ${\sigma_j \geq \sigma(x_0,\ep)/\gamma_{\rm reg}}$ for all $j>0$.

\item
Suppose that the algorithm terminates at $j=\ell$ for some $\ell \geq 0$.
Then, $N$ is at most
\begin{align*}
&\frac{\sqrt{2\gamma_{\rm inc}L_f}}{\sqrt{\gamma_{\rm reg}}-1}\left[\sqrt{\gamma_{\rm reg}}\sqrt{\frac{1}{\sigma_\ell}} - \sqrt{\frac{1}{\sigma_0}}\right]\log\frac{\gamma_{\rm inc}L_f+\sigma_\ell}{\beta\sigma_\ell}+ \left(1+\log_{\gamma_{\rm reg}}\frac{\sigma_0}{\sigma_\ell}\right)\left(2+\log\frac{\gamma_{\rm inc}L_f+\sigma_\ell}{\beta\sigma_\ell}\right).
\end{align*}
\item
If $\sigma_0\leq \sigma(x_0,\ep)$, then
$
N \leq 2+ \left(\sqrt{\frac{2\gamma_{\rm inc}L_f}{\sigma_0}}+1\right)\log\frac{\gamma_{\rm inc}L_f+\sigma_0}{\beta\sigma_0}.
$

\item
If $\sigma_0 \geq \sigma(x_0,\ep)$, then $N$ is at most
\begin{align}
&\frac{\sqrt{2\gamma_{\rm inc}L_f}}{\sqrt{\gamma_{\rm reg}}-1}\left[\gamma_{\rm reg}\sqrt{\frac{1}{\sigma(x_0,\ep)}} - \sqrt{\frac{1}{\sigma_0}} \right]\log\left(\frac{\gamma_{\rm reg}\gamma_{\rm inc}L_f}{\beta\sigma(x_0,\ep)}+\frac{1}{\beta}\right) \nonumber\\
&+ \left(2+\log_{\gamma_{\rm reg}}\frac{ \sigma_0}{\sigma(x_0,\ep)}\right)\left[2+\log\left(\frac{\gamma_{\rm reg}\gamma_{\rm inc}L_f}{\beta\sigma(x_0,\ep)}+\frac{1}{\beta}\right)\right] \nonumber\\
=~&
O\left(\sqrt{\frac{L_f\dist(x_0,X^*)}{\ep}}\log \frac{L_f\dist(x_0,X^*)}{\ep} + \log \frac{\sigma_0 \dist(x_0,X^*)}{\ep} \log \frac{L_f\dist(x_0,X^*)}{\ep}\right).
\label{AdaAPG-compl-O}
\end{align}
\end{enumerate}
\end{theorem}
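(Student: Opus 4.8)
The plan is to handle the four parts in sequence, with Lemma~\ref{lem:AdaAPG-loop} and Proposition~\ref{prop:APG} doing the heavy lifting; parts (iii) and (iv) then fall out as specializations of (ii) combined with (i). For (i), I would invoke the gradient-mapping estimate $\norm{g_{M_k}(x_{k+1})} \leq (1+\sqrt{2}\beta)\sigma_j\dist(x_0,X^*)$ from Lemma~\ref{lem:AdaAPG-loop}, which holds whenever the $j$-th outer loop completes via the growth test at Line~\ref{A-growth}. If $\sigma_j \leq \sigma(x_0,\ep)$, substituting the definition \eqref{sigma-th} turns the right-hand side into $\ep$, so $x_{k+1}$ already meets the stopping criterion $\norm{g_{M_k}(x_{k+1})} \leq \ep$; hence the algorithm cannot pass the end of that outer loop without terminating, which is the first assertion. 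The ``Moreover'' claim is the contrapositive: reaching outer loop $j>0$ requires loop $j-1$ to have completed without terminating, which by the first assertion forces $\sigma_{j-1} > \sigma(x_0,\ep)$; since $\sigma_{j-1} = \gamma_{\rm reg}\sigma_j$, this gives $\sigma_j > \sigma(x_0,\ep)/\gamma_{\rm reg}$.

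For (ii), the idea is to sum the per-outer-loop count $k+1 \leq 2 + (\sqrt{2\gamma_{\rm inc}L_f/\sigma_j}+1)\log\frac{\gamma_{\rm inc}L_f+\sigma_j}{\beta\sigma_j}$ of Lemma~\ref{lem:AdaAPG-loop} over $j=0,1,\ldots,\ell$. I first note this bound also applies to the terminating loop $\ell$, since its derivation only uses that the growth test failed at the penultimate inner iteration, which holds whenever the loop stops—whether by the $\ep$-check or by the growth test. Because $\sigma_j = \sigma_0/\gamma_{\rm reg}^j$, the factor $\sqrt{1/\sigma_j} = \gamma_{\rm reg}^{j/2}/\sqrt{\sigma_0}$ grows geometrically while $\log\frac{\gamma_{\rm inc}L_f+\sigma_j}{\beta\sigma_j}$ is increasing in $j$; bounding every logarithm by its largest value at $j=\ell$ and evaluating $\sum_{j=0}^{\ell}\gamma_{\rm reg}^{j/2} = (\gamma_{\rm reg}^{(\ell+1)/2}-1)/(\sqrt{\gamma_{\rm reg}}-1)$ via $\gamma_{\rm reg}^{\ell/2} = \sqrt{\sigma_0/\sigma_\ell}$ produces exactly the first summand, with coefficient $\sqrt{\gamma_{\rm reg}}\sqrt{1/\sigma_\ell}-\sqrt{1/\sigma_0}$. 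The residual contributions $\sum_{j=0}^{\ell} 2$ and $\sum_{j=0}^{\ell}\log\frac{\gamma_{\rm inc}L_f+\sigma_j}{\beta\sigma_j} \leq (\ell+1)\log\frac{\gamma_{\rm inc}L_f+\sigma_\ell}{\beta\sigma_\ell}$, together with $\ell+1 = 1+\log_{\gamma_{\rm reg}}(\sigma_0/\sigma_\ell)$, assemble into the second summand.

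Parts (iii) and (iv) are then corollaries. For (iii), $\sigma_0 \leq \sigma(x_0,\ep)$ forces termination already at $j=0$ by (i), so $N$ equals the single-loop count, i.e., the $j=0$ instance of the Lemma's bound. For (iv), I would feed the lower bound $\sigma_\ell \geq \sigma(x_0,\ep)/\gamma_{\rm reg}$ from (i) into the bound of (ii): this replaces $\sqrt{1/\sigma_\ell}$, $\log\frac{\gamma_{\rm inc}L_f+\sigma_\ell}{\beta\sigma_\ell}$, and $\log_{\gamma_{\rm reg}}(\sigma_0/\sigma_\ell)$ by their stated surrogates in terms of $\sigma(x_0,\ep)$ (using $1/\sigma_\ell \leq \gamma_{\rm reg}/\sigma(x_0,\ep)$ and $\sigma_0/\sigma_\ell \leq \gamma_{\rm reg}\sigma_0/\sigma(x_0,\ep)$), after which the explicit estimate follows; the final big-$O$ form comes from substituting $1/\sigma(x_0,\ep) = (1+\sqrt{2}\beta)\dist(x_0,X^*)/\ep$ and collecting constants.

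The main obstacle I anticipate is the geometric-series bookkeeping in (ii): verifying that pulling the increasing logarithmic factor out at its maximal value $j=\ell$ is legitimate, and that the telescoping of $\sum_{j=0}^{\ell}\gamma_{\rm reg}^{j/2}/\sqrt{\sigma_0}$ yields the precise coefficient $\sqrt{\gamma_{\rm reg}}\sqrt{1/\sigma_\ell}-\sqrt{1/\sigma_0}$ rather than a looser surrogate. The rest is careful but routine substitution.
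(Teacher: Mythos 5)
Your proposal is correct and follows essentially the same route as the paper's proof: part (i) via the gradient-mapping bound of Lemma~\ref{lem:AdaAPG-loop} and its contrapositive, part (ii) by summing the per-loop bound over $j=0,\ldots,\ell$ with the geometric series $\sum_j \gamma_{\rm reg}^{j/2}$ and the monotone logarithmic factor pulled out at $j=\ell$, and parts (iii)--(iv) as specializations of (ii) using $\sigma_\ell=\sigma_0$ and $\sigma_\ell\geq\sigma(x_0,\ep)/\gamma_{\rm reg}$, respectively. Your explicit justification that the per-loop bound also covers the terminating loop $\ell$ (since only the failure of the growth test at the penultimate inner iteration is used) is a point the paper leaves implicit, and it is a valid and welcome addition.
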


\begin{proof}
(i)
\sloppy 
By Lemma~\ref{lem:AdaAPG-loop}, the algorithm must terminate at the $j$-th loop whenever we have
$
{(1+\sqrt{2}\beta)\sigma_j\dist(x_0,X^*) \leq \ep}
$,
or equivalently,
$
\sigma_j \leq \frac{\ep}{(1+\sqrt{2}\beta)\dist(x_0,X^*)}=\sigma(x_0,\ep).
$
Thus, we obtain the former assertion of (i).
To see the latter assertion,
assume that $\sigma_j < \sigma(x_0,\ep)/\gamma_{\rm reg}$ holds for some $j > 0$. Then, the previous $(j-1)$-th loop satisfies $\sigma_{j-1}< \sigma(x_0,\ep)$ so that the $j$-th loop is not executed by the former assertion.
This verifies the latter assertion of (i).

(ii) Since $\sigma_\ell=\sigma_0/\gamma_{\rm reg}^\ell$, we have $\ell = \log_{\gamma_{\rm reg}}\sigma_0/\sigma_\ell$.
Using Lemma~\ref{lem:AdaAPG-loop}, $N$ is bounded as follows.
\begin{align*}
N
&\leq
\sum_{j=0}^\ell \left[
2+ \left(\sqrt{\frac{2\gamma_{\rm inc}L_f}{\sigma_j}}+1\right)\log\frac{\gamma_{\rm inc}L_f+\sigma_j}{\beta\sigma_j}
\right]\\
&\leq 2(\ell+1) + \sqrt{2\gamma_{\rm inc}L_f}\log\frac{\gamma_{\rm inc}L_f+\sigma_\ell}{\beta\sigma_\ell}\sum_{j=0}^\ell \sqrt{\frac{1}{\sigma_j}}
+ (\ell+1)\log\frac{\gamma_{\rm inc}L_f+\sigma_\ell}{\beta\sigma_\ell},
\end{align*}
where the second inequality is due to $\sigma_j \geq \sigma_\ell~(\forall j \leq \ell)$.
Note that
\begin{align*}
\sum_{j=0}^\ell \sqrt{\frac{1}{\sigma_j}}
&=
\sum_{j=0}^\ell \sqrt{\frac{1}{\sigma_0/\gamma_{\rm reg}^j}}
=
\sqrt{\frac{1}{\sigma_0}} \sum_{j=0}^\ell \sqrt{\gamma_{\rm reg}}^{j}
=
\sqrt{\frac{1}{\sigma_0}}\frac{\sqrt{\gamma_{\rm reg}}^{\ell+1} -1}{\sqrt{\gamma_{\rm reg}} -1}\\
&=
\sqrt{\frac{1}{\sigma_0}}\frac{\sqrt{\gamma_{\rm reg}}\sqrt{\sigma_0/\sigma_\ell} -1}{\sqrt{\gamma_{\rm reg}} -1}
=
\frac{1}{\sqrt{\gamma_{\rm reg}}-1}\left[\sqrt{\gamma_{\rm reg}}\sqrt{\frac{1}{\sigma_\ell}} - \sqrt{\frac{1}{\sigma_0}}\right].
\end{align*}
Therefore, we see that
$$
N\leq (\ell+1)\left(2+\log\frac{\gamma_{\rm inc}L_f+\sigma_\ell}{\beta\sigma_\ell}\right)
+
\frac{\sqrt{2\gamma_{\rm inc}L_f}}{\sqrt{\gamma_{\rm reg}}-1}\left[\sqrt{\gamma_{\rm reg}}\sqrt{\frac{1}{\sigma_\ell}} - \sqrt{\frac{1}{\sigma_0}}\right]\log\frac{\gamma_{\rm inc}L_f+\sigma_\ell}{\beta\sigma_\ell}.
$$
The assertion follows by substituting $\ell = \log_{\gamma_{\rm reg}}{\sigma_0}/{\sigma_\ell}$.

In view of (i), the assertions (iii) and (iv) follow by applying (ii) with $\sigma_\ell = \sigma_0$ and $\sigma_\ell \geq \sigma(x_0,\ep)/\gamma_{\rm reg}$, respectively.
The big-$O$ expression \eqref{AdaAPG-compl-O} is obtained by substituting the definition of $\sigma(x_0,\ep)$.
\end{proof}

If $\sigma_0$ is chosen appropriately, then 
the complexity estimates in Theorem~\ref{thm:AdaAPG} (iii) and (vi) match
the lower complexity bound \eqref{low-compl-smooth} for the class $\mathcal{F}(x_0,R,L)$, up to a logarithmic factor.
Nesterov's regularization technique \cite{Nes12} essentially corresponds to the ideal choice $\sigma_0=\sigma(x_0,\ep)$, which requires $\dist(x_0,X^*)$ to be known.
Here, we show a simple example to choose $\sigma_0$ so that $\AdaAPG$ attains the near optimality without knowing $\dist(x_0,X^*)$.

\begin{corollary}\label{cor:AdaAPG}
Given $x_{0} \in \E$ and $L_{-1} \in [L_{\min},\gamma_{\rm inc}L_f]$, apply
$\{T_M(x_0),M,L\}\leftarrow \PGIter(x_{0},L_{-1})$.
For any $\ep>0$ such that $\norm{g_M(x_0)} \geq \ep$, 
choose $\sigma_0$ from the interval
\begin{equation}\label{sigma0-choice}
\sigma_0 \in \left[\frac{2\ep M}{(1+\sqrt{2}\beta)\norm{g_{M}(x_0)}}, \frac{2M}{1+\sqrt{2}\beta}\right].
\end{equation}
Then, we have
\begin{equation}\label{simga0-bound}
\sigma(x_0,\ep) \leq \sigma_0 \leq \frac{2\gamma_{\rm inc}L_f}{1+\sqrt{2}\beta}.
\end{equation}
Consequently, $\AdaAPG(x_0,L_{-1},\sigma_0,\ep)$ finds $x_k \in \E$ and $M_k>0$ satisfying $\norm{g_{M_k}(x_k)}\leq \ep$ with the iteration complexity at most
\begin{equation}\label{compl-bound-near-opt}
O\left(\sqrt{\frac{L_f\dist(x_0,X^*)}{\ep}}\log \frac{L_f\dist(x_0,X^*)}{\ep}\right).
\end{equation}
\end{corollary}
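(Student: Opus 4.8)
The plan is to establish the sandwich bound \eqref{simga0-bound} first, and then feed it into Theorem~\ref{thm:AdaAPG}~(iv) and simplify the resulting big-$O$ expression. Once \eqref{simga0-bound} is in hand, the hypothesis $\sigma_0 \geq \sigma(x_0,\ep)$ of Theorem~\ref{thm:AdaAPG}~(iv) is met, so the estimate \eqref{AdaAPG-compl-O} applies and the only remaining work is to absorb the lower-order logarithmic term.

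For \eqref{simga0-bound}, the upper bound is immediate: since $\{T_M(x_0),M,L\}$ is produced by $\PGIter(x_0,L_{-1})$ with $L_{-1} \leq \gamma_{\rm inc}L_f$, Lemma~\ref{lem:PG-iter}~(ii) gives $M \leq \gamma_{\rm inc}L_f$, whence the right endpoint $\frac{2M}{1+\sqrt{2}\beta}$ of the interval \eqref{sigma0-choice} is at most $\frac{2\gamma_{\rm inc}L_f}{1+\sqrt{2}\beta}$. For the lower bound $\sigma(x_0,\ep) \leq \sigma_0$, it suffices to show that $\sigma(x_0,\ep)$ is dominated by the left endpoint $\frac{2\ep M}{(1+\sqrt{2}\beta)\norm{g_M(x_0)}}$ of \eqref{sigma0-choice}; after cancelling the common factor $\frac{\ep}{1+\sqrt{2}\beta}$ this reduces to $\norm{g_M(x_0)} \leq 2M\dist(x_0,X^*)$. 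This last inequality is exactly Lemma~\ref{lem:gmap}~(iv) applied at $y=x_0$ with $L=M$, whose hypothesis $\varphi(T_M(x_0)) \leq m_M(x_0;T_M(x_0))$ holds because it is precisely the stopping test \eqref{PG-iter-test} satisfied upon termination of $\PGIter$.

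With \eqref{simga0-bound} verified, Theorem~\ref{thm:AdaAPG}~(i) guarantees termination (since $\sigma_j \downarrow 0$ eventually falls below $\sigma(x_0,\ep)$), and the output $x_k$, $M_k$ satisfies $\norm{g_{M_k}(x_k)} \leq \ep$ by the stopping rule of $\AdaAPG$. Applying Theorem~\ref{thm:AdaAPG}~(iv) yields the estimate \eqref{AdaAPG-compl-O}. To reach \eqref{compl-bound-near-opt}, I would use the upper bound $\sigma_0 = O(L_f)$ from \eqref{simga0-bound} to replace $\log\frac{\sigma_0\dist(x_0,X^*)}{\ep}$ by $O\!\left(\log\frac{L_f\dist(x_0,X^*)}{\ep}\right)$, so that the second summand in \eqref{AdaAPG-compl-O} becomes $O\!\left(\big(\log\frac{L_f\dist(x_0,X^*)}{\ep}\big)^2\right)$. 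Writing $t := \frac{L_f\dist(x_0,X^*)}{\ep}$, the two summands are $O(\sqrt{t}\log t)$ and $O((\log t)^2)$; since $\log t = O(\sqrt{t})$, the latter is absorbed into the former, giving \eqref{compl-bound-near-opt}.

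The computations here are all routine; the only point requiring care is the final asymptotic simplification, where one must confirm that the term counting the regularization restarts, $(\log t)^2$, is genuinely dominated by the acceleration term $\sqrt{t}\log t$ in the relevant regime $t \to \infty$ (equivalently $\ep \to 0$), so that dropping it does not weaken the stated near-optimal rate.
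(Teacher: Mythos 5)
Your proposal is correct and takes essentially the same route as the paper: you derive the sandwich bound \eqref{simga0-bound} from Lemma~\ref{lem:gmap}~(iv) (whose hypothesis is guaranteed by the stopping test \eqref{PG-iter-test} of $\PGIter$) together with $M\leq \gamma_{\rm inc}L_f$ from Lemma~\ref{lem:PG-iter}~(ii), and then invoke Theorem~\ref{thm:AdaAPG}~(iv), absorbing the second (logarithmic-squared) term of \eqref{AdaAPG-compl-O} into the first. The paper's proof is a terser version of exactly this argument, including the final domination step justified by the upper bound on $\sigma_0$.
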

\begin{proof}
Since $\varphi(T_M(x_0)) \leq m_M(x_0;T_M(x_0))$ holds by the condition \eqref{PG-iter-test} in $\PGIter$, Lemma~\ref{lem:gmap} (iv) implies
$$\dist(x_0,X^*) \geq \frac{1}{2M}\norm{g_{M}(x_0)} \geq \frac{1}{2M}\ep \geq \frac{1}{2\gamma_{\rm inc}L_f}\ep,$$
where the last inequality follows from Lemma~\ref{lem:PG-iter} (ii).
This can be rewritten as
$$
\sigma(x_0,\ep) \equiv \frac{\ep}{(1+\sqrt{2}\beta)\dist(x_0,X^*)} \leq \frac{2\ep M}{(1+\sqrt{2}\beta)\norm{g_{M}(x_0)}} \leq \frac{2M}{1+\sqrt{2}\beta} \leq \frac{2\gamma_{\rm inc}L_f}{1+\sqrt{2}\beta}.
$$
Therefore, \eqref{sigma0-choice} implies \eqref{simga0-bound}.
In particular, we can apply Theorem~\ref{thm:AdaAPG}~(iv) to obtain the complexity bound \eqref{AdaAPG-compl-O}, whose second term is dominated by the first one due to the upper bound \eqref{simga0-bound} of $\sigma_0$.
\end{proof}

Finally, we make an observation on the convergence of the proposed method.
Let us consider the execution of $\AdaAPG$ with $\ep=0$, which is possibly an infinite step algorithm (Notice that $\ep$ appears only in the stopping criterion).
Then, the choice
$\sigma_0:=2M/(1+2\sqrt{\beta})$
given in Corollary~\ref{cor:AdaAPG}
ensures the nearly optimal complexity bound \eqref{compl-bound-near-opt} for \emph{every} $\ep\in ]0,\|{g_{M}(x_0)}\|[$.
This means that, with this choice of $\sigma_0$, the algorithm $\AdaAPG$ with $\ep=0$ yields a \emph{nearly optimal convergence} with respect to $\norm{g_L(x)}$.


\newcommand{\outerXinit}[1]{{x^{(#1)}}}
\newcommand{\outerXplus}[1]{{x^{(#1)}_+}}
\newcommand{\xinit}{{x^{(0)}}}
\newcommand{\Linit}{{L^{(-1)}}}
\newcommand{\sigmainit}{{\sigma^{(0)}}}
\newcommand{\outerM}[1]{{M^{(#1)}}}
\newcommand{\outerL}[1]{{L^{(#1)}}}
\newcommand{\outersigma}[1]{{\sigma^{(#1)}}}
\newcommand{\outerep}[1]{{\ep^{(#1)}}}
\newcommand{\innerM}[2]{{M_{#2}}}
\newcommand{\innerL}[2]{{L_{#2}}}
\newcommand{\innerx}[2]{{x_{#2}}}
\newcommand{\innersigma}[2]{{\sigma_{#2}}}
\newcommand{\innerphi}[2]{{\varphi_{#2}}}
\newcommand{\innerA}[2]{{A_{#2}}}
\newcommand{\innery}[2]{{y_{#2}}}

\subsection{Adaptive Restart Algorithm under the H\"olderian Error Bound Condition}\label{ssec:rAdaAPG}

In this section, given an initial point $\xinit \in \E$, we assume that the objective function $\varphi$ admits the H\"olderian error bound condition
\begin{equation}\label{HEB-review}
\varphi(x)-\varphi^* \geq \kappa\dist(x,X^*)^\rho,\quad \forall x\in \lev_{\varphi}(\varphi(\xinit)),
\end{equation}
for some $\kappa>0$ and $\rho\geq 1$.
For this case, we propose the restart scheme  $\rAdaAPG$ described in Algorithm~\ref{alg:rAdaAPG}.
Namely, given a current solution $\outerXinit{t}$, we apply a proximal gradient iteration $\outerXplus{t}:=T_{\outerM{t}}(\outerXinit{t})$, from which we start $\AdaAPG$ to find the next $\outerXinit{t+1}$ reducing the gradient mapping norm at the ratio $\theta \in ]0,1[$:
$$
\norm{g_{\outerM{t+1}}(\outerXinit{t+1})} \leq \theta\norm{g_{\outerM{t}}(\outerXinit{t})}.
$$
Remarkably, the regularization parameter $\outersigma{t}$ is input to $\AdaAPG$, which adaptively finds the next $\outersigma{t+1}$.
Therefore, the next $\outerXinit{t+1}$ can be seen as an approximate solution to the regularized problem
$$
\min_{x \in \E}\left[\varphi(x)+\frac{\outersigma{t+1}}{2}\norm{x-\outerXplus{t}}^2\right],
$$
computed by an accelerated proximal gradient method.
Finally, we note that the initial regularization parameter $\sigmainit$ may be determined depending on the result of Line~\ref{rAdaAPG-init}, as observed in Corollary~\ref{cor:rAdaAPG}.

\begin{algorithm}
\caption{
Restart Scheme for Adaptive Accelerated Proximal Gradient Method
\newline
$\rAdaAPG (\xinit,\Linit,\sigmainit,\ep)$
}
\label{alg:rAdaAPG}
\textbf{Parameters:}\\
\quad $\gamma_{\rm inc}>1$, $\gamma_{\rm dec}\geq 1$, $L_{\rm min}\in ]0,L_f]$ (for backtracking line-search);\\
\quad $\gamma_{\rm reg}>1$ (the ratio to decrease the regularization parameter);\\
\quad $\beta\in ]0,1]$  (controls the timing when the current regularization parameter must be decreased);\\
\quad $\theta \in ]0,1[$ (the ratio reducing the residue per iteration).\\
\textbf{Input:} $\xinit \in \E$, $\Linit \in [L_{\min},\gamma_{\rm inc}L_f]$, $\sigmainit>0$, $\ep>0$.
\begin{algorithmic}[1]
\State Compute $\{\outerXplus{0},\outerM{0},\outerL{0}\} \leftarrow \PGIter(\xinit,\Linit)$ \label{rAdaAPG-init}.
\For{$t=0,1,2,\ldots$}
	\State \textbf{if} $\norm{g_{\outerM{t}}(\outerXinit{t})} \leq \ep$ \textbf{then} output $\{\outerXinit{t},\outerM{t},\outerXplus{t}\}$ and terminate the algorithm. \label{rAdaAPG-terminate}
	\State $\outerep{t}:= \theta\norm{g_{\outerM{t}}(\outerXinit{t})}$.
	\State $\{\outersigma{t+1},\outerXinit{t+1},\outerXplus{t+1},\outerM{t+1},\outerL{t+1}\} \leftarrow \AdaAPG(\outerXplus{t},\outerL{t},\outersigma{t},\ep^{(t)})$
	\State (with testing the optional condition \eqref{APG-iter-test-c} of $\APGIter$).
\EndFor
\end{algorithmic}
\end{algorithm}

\subsubsection{Iteration Complexity Results and Near Optimality}
To show the iteration complexity result, observe that
the total number $N$ of executions of $\APGIter$ determines the complexity of $\rAdaAPG$. For instance, the total number of evaluations of $\nabla{f}(\cdot)$ in $\rAdaAPG$ can be bounded by (recall Proposition~\ref{prop:APG} (i), Lemma~\ref{lem:PG-iter} (ii), and Remark~\ref{rem:APG-iter-eval})
$$
\left[
1+\frac{\log \gamma_{\rm dec}}{\log \gamma_{\rm inc}}\right](2N+1) + \frac{2}{\log \gamma_{\rm inc}}\log\frac{\gamma_{\rm inc}L_f}{\Linit}.
$$
We focus on the bound on $N$ in the remaining of this section.

To describe the iteration complexity bounds, we define $N(\ep,\sigma_*,C)$ and $\bar\sigma$ as follows.
Given $\ep,\sigma_*,C>0$, let $N(\ep,\sigma_*,C):=0$ if $\norm{g_{\outerM{0}}(\xinit)} \leq \ep$ and otherwise
\begin{align}
N(\ep,\sigma_*,C) &
:= \left(1+ \log_{1/\theta}\frac{\norm{g_{\outerM{0}}(\xinit)}}{\ep} + \log_{\gamma_{\rm reg}}\frac{\sigmainit}{\sigma_*}\right)\left(2+\log\frac{\gamma_{\rm inc}L_f+\sigma_*}{\beta\sigma_*}\right) \nonumber\\
&\quad +
\frac{\sqrt{2\gamma_{\rm inc}L_f}}{\sqrt{\gamma_{\rm reg}}-1}\left[
\sqrt{\frac{1}{\sigma_*}} - \sqrt{\frac{1}{\sigmainit}}\right]\log\frac{\gamma_{\rm inc}L_f+\sigma_*}{\beta\sigma_*} 
+ C\sqrt{2\gamma_{\rm inc}L_f}\log\frac{\gamma_{\rm inc}L_f+\sigma_*}{\beta\sigma_*}. \label{N-def}
\end{align}
Moreover, we define
\begin{equation}\label{bar-sigma-def}
\bar\sigma = \left\{
\begin{array}{ll}
\ds
\frac{\theta}{1+\sqrt{2}\beta}
\kappa^{\frac{1}{\rho-1}}\left(\frac{L_f}{L_{\min}}+1\right)^{-\frac{1}{\rho-1}}
\ep^{\frac{\rho-2}{\rho-1}}
& (\text{if } \rho \geq 2),\\[3truemm]
\ds
\frac{\theta}{1+\sqrt{2}\beta}
\kappa^{\frac{2}{\rho}}\left(\frac{L_f}{L_{\min}}+1\right)^{-1}(\varphi(\xinit)-\varphi^*)^{-\frac{2-\rho}{\rho}}
& (\text{if } 1\leq \rho<2).
\end{array}
\right.
\end{equation}
The next theorem provides the descriptions of iteration complexity bounds, which have complicated expressions to explicate their dependence on parameters. A simplified form is presented in Corollary~\ref{cor:rAdaAPG}.
Their proofs are given in Section~\ref{sec:proof-main-result}.
\begin{theorem}\label{thm:rAdaAPG}
Assume that the H\"olderian error bound condition \eqref{HEB-review} holds.
In $\rAdaAPG$, 
let $N$ be the total number of executions of $\APGIter$.
Denote
\begin{equation}\label{simga*-def}
\sigma_* := \left\{
\begin{array}{ll}
\sigmainit & (\text{if } \sigmainit \leq \bar\sigma),\\
\bar\sigma/\gamma_{\rm reg} & (\text{otherwise}),
\end{array}
\right.
\end{equation}
for  $\bar\sigma$ defined in \eqref{bar-sigma-def}.
Also, let  $N(\cdot,\cdot,\cdot)$ be defined by    \eqref{N-def}.
Then, the following assertions hold.
\begin{enumerate}[(i)]
\item If $\rho=2$, then $N\leq N(\ep,\sigma_*,C)$ holds with
$
C =
\sqrt{\frac{1}{\sigma_*}}\left(1+\log_{1/\theta}\frac{\norm{g_{\outerM{0}}(\xinit)}}{\ep}\right).
$

\item Suppose $\rho>2$.
If $\sigmainit\geq\bar\sigma$, then  $N\leq N(\ep,\sigma_*,C)$ holds with
\begin{align*}
C=&
\sqrt{\frac{1}{\sigmainit}}\left(1+\frac{\rho-1}{\rho-2}\log_{1/\theta}\frac{\sigma_*^{(0)}}{\min(\sigma_*^{(0)},\sigmainit)}\right)+
\frac{\sqrt{\gamma_{\rm reg}}}{1-\sqrt{\theta}^{\frac{\rho-2}{\rho-1}}}
\left(
\sqrt{\frac{1}{\bar\sigma}}
-
\sqrt{\theta}^{\frac{\rho-2}{\rho-1}}
\sqrt{\frac{1}{\sigmainit}}
\right),
\end{align*}
where
$
\sigma_*^{(0)}:=\frac{\theta}{1+\sqrt{2}\beta}\cdot
\kappa^{\frac{1}{\rho-1}}\left(\frac{L_f}{L_{\min}}+1\right)^{-\frac{1}{\rho-1}}
\|g_{\outerM{0}}(\xinit)\|^{\frac{\rho-2}{\rho-1}}.
$
Otherwise, it follows $\outersigma{t}=\sigmainit$ for all $t\geq 0$ and $N\leq N(\ep,\sigmainit,C)$ holds with
$
C= 
\sqrt{\frac{1}{\sigmainit}}\left(1+\log_{1/\theta}\frac{\norm{g_{\outerM{0}}(\xinit)}}{\ep}\right).
$
\item Suppose $\rho\in ]1,2[$.
Then, we have
\begin{equation}\label{N-bd-superlinear}
N \leq 1+ N(\max(\ep,\ep_*),\sigma_*,C) + \left(\log\frac{1}{\rho-1}\right)^{-1}\left(\log\log \frac{\ep_*}{\theta^{\frac{\rho-1}{2-\rho}}\min(\ep,\ep_*)}-\log\log\frac{1}{\theta^{\frac{\rho-1}{2-\rho}}}\right),
\end{equation}
where
$
C =
\sqrt{\frac{1}{\sigma_*}}\left(1+\log_{1/\theta}\frac{\norm{g_{\outerM{0}}(\xinit)}}{\max(\ep,\ep_*)}  \right)
$ and
\begin{equation}\label{rho-12-ep*-def}
\ep_*=
\left[(1+\sqrt{2})\theta^{-1}(\gamma_{\rm inc}L_f+\sigmainit)\right]^{-\frac{\rho-1}{2-\rho}}
{\kappa^{\frac{1}{2-\rho}}}\left(\frac{L_f}{L_{\min}}+1\right)^{-\frac{1}{2-\rho}}.
\end{equation}
\item
\sloppy 
If $\rho=1$, then we have
$
N \leq 1 + N(\max(\ep,\ep_*),\sigma_*,C)
$,
where
$
\ep_* = \kappa\left(\frac{L_f}{L_{\min}} + 1\right)^{-1}
$ and
$
{C=
\sqrt{\frac{1}{\sigma_*}}\left(1+\log_{1/\theta}\frac{\norm{g_{\outerM{0}}(\xinit)}}{\max(\ep,\ep_*)}\right)}
$.
Moreover, if $\ep<\ep_*$, then $\outerXplus{t}$ in the output of the algorithm must be an optimal solution to the original problem \eqref{main-prob}.
\end{enumerate}
\end{theorem}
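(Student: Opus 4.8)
The plan is to reduce the analysis to three structural facts about the outer iterates of $\rAdaAPG$ and then to feed them into the per-call estimate of Theorem~\ref{thm:AdaAPG}(ii). Throughout write $r_t := \norm{g_{\outerM{t}}(\outerXinit{t})}$, so that $r_0 = \norm{g_{\outerM{0}}(\xinit)}$. The first fact is the linear residual decay $r_{t+1} \leq \theta r_t$: this is immediate because the $t$-th call $\AdaAPG(\outerXplus{t},\outerL{t},\outersigma{t},\outerep{t})$ returns only after producing a point whose gradient-mapping norm is at most its target $\outerep{t} = \theta r_t$. Consequently the number of outer loops before $r_t \leq \ep$ is at most $1 + \log_{1/\theta}(r_0/\ep)$, which is the source of the first logarithm in $N(\ep,\sigma_*,C)$.

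The second fact is that the iterates remain in $\lev_\varphi(\varphi(\xinit))$. Combining Lemma~\ref{lem:PG-iter}(i), Proposition~\ref{prop:APG}(iv), and the optional test \eqref{APG-iter-test-c} of $\APGIter$ (switched on in $\rAdaAPG$) shows that $\varphi(\outerXplus{t})$ is nonincreasing, so HEB is available at every $\outerXplus{t}$. The third fact is the uniform bound $\sigma_* \leq \outersigma{t} \leq \sigmainit$. Its engine is the termination threshold $\sigma(\outerXplus{t},\outerep{t})$ of Theorem~\ref{thm:AdaAPG}: since $\outerXplus{t}=T_{\outerM{t}}(\outerXinit{t})$, Lemma~\ref{lem:gmap}(iii) bounds a subgradient at $\outerXplus{t}$ by $(L_f/L_{\min}+1)r_t$, and HEB (Lemma~\ref{lem:HEB}) then gives $\dist(\outerXplus{t},X^*) \leq ((L_f/L_{\min}+1)r_t/\kappa)^{1/(\rho-1)}$; substituting this into $\sigma(\outerXplus{t},\outerep{t})$ yields a lower bound proportional to $r_t^{(\rho-2)/(\rho-1)}$. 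For $\rho \geq 2$ the exponent is nonnegative and $r_t \geq \ep$ forces this to be $\geq \bar\sigma$, whereas for $\rho < 2$ I would instead use the functional-residual form of HEB together with the monotonicity just established to bound the threshold below by $\bar\sigma$ uniformly (this is exactly why the second line of \eqref{bar-sigma-def} is phrased through $\varphi(\xinit)-\varphi^*$). Because $\AdaAPG$ only shrinks $\sigma$ and stops once $\sigma_j$ falls below the threshold, Theorem~\ref{thm:AdaAPG}(i) then pins $\outersigma{t}$ to $[\sigma_*,\sigmainit]$.

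With these facts I would sum the bound of Theorem~\ref{thm:AdaAPG}(ii) over the outer loops, using $\outersigma{t+1}\geq\sigma_*$ to replace each factor $\log\frac{\gamma_{\rm inc}L_f+\outersigma{t+1}}{\beta\outersigma{t+1}}$ by $\log\frac{\gamma_{\rm inc}L_f+\sigma_*}{\beta\sigma_*}$. The overhead part telescopes, $\sum_t \log_{\gamma_{\rm reg}}(\outersigma{t}/\outersigma{t+1}) = \log_{\gamma_{\rm reg}}(\sigmainit/\outersigma{T}) \leq \log_{\gamma_{\rm reg}}(\sigmainit/\sigma_*)$, producing the first product in $N(\ep,\sigma_*,C)$, while the reduction piece of the acceleration cost telescopes to the middle term $\frac{\sqrt{2\gamma_{\rm inc}L_f}}{\sqrt{\gamma_{\rm reg}}-1}[\sqrt{1/\sigma_*}-\sqrt{1/\sigmainit}]\log(\cdots)$. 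The remaining quantity to estimate is $C$, which absorbs $\sum_{t\geq1}\sqrt{1/\outersigma{t}}$; here the cases split. For $\rho=2$ the threshold is the constant $\bar\sigma$, so $\outersigma{t}\geq\sigma_*$ is essentially fixed and $C \leq \sqrt{1/\sigma_*}\,(1+\log_{1/\theta}(r_0/\ep))$. For $\rho>2$ the relation $\outersigma{t+1}\gtrsim r_t^{(\rho-2)/(\rho-1)}$ with $r_t\leq\theta^t r_0$ makes $\sqrt{1/\outersigma{t}}$ grow geometrically with ratio $\sqrt{\theta}^{\,(\rho-2)/(\rho-1)}<1$, so the sum is dominated by its final term $\approx\sqrt{1/\bar\sigma}$ and collapses to the closed form in (ii); the two subcases there merely record whether $\sigmainit$ already lies below $\bar\sigma$ (in which case $\sigma$ is never reduced and $\outersigma{t}\equiv\sigmainit$).

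The genuinely different regime is $\rho<2$, where I must expose the superlinear/finite phase. The key claim is that once $r_t\leq\ep_*$, a single execution of $\APGIter$ inside the $t$-th call already meets its target: after one step $A_1 = 2/\outerM{t+1}$, so Proposition~\ref{prop:APG}(v) gives $r_{t+1} \leq (1+\sqrt{2})(\gamma_{\rm inc}L_f+\sigmainit)\dist(\outerXplus{t},X^*)$, and feeding in the HEB bound on $\dist(\outerXplus{t},X^*)$ turns this into the superlinear recursion $r_{t+1} \leq c\,r_t^{1/(\rho-1)}$ with $1/(\rho-1)>1$; the level $\ep_*$ in \eqref{rho-12-ep*-def} is precisely where $c\,r_t^{1/(\rho-1)} \leq \theta r_t$, i.e.\ where one inner step suffices. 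Iterating this recursion $\bigl(\log\tfrac{1}{\rho-1}\bigr)^{-1}\log\log(\cdots)$ times carries $r_t$ from $\ep_*$ down to $\ep$, each outer loop now costing a single $\APGIter$, which produces the extra doubly-logarithmic summand of (iii); the preceding phase down to $\max(\ep,\ep_*)$ is handled exactly as above and contributes $N(\max(\ep,\ep_*),\sigma_*,C)$. For $\rho=1$ the exponent degenerates and the same subgradient-plus-HEB estimate reads $\kappa \leq (L_f/L_{\min}+1)r_t$ whenever $\outerXplus{t}\notin X^*$, so $r_t<\ep_*=\kappa(L_f/L_{\min}+1)^{-1}$ forces $\outerXplus{t}\in X^*$, giving finite convergence. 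I expect the main obstacle to be the bookkeeping at the phase transition for $\rho<2$: verifying that in the superlinear phase $\sigma$ never needs to shrink (so each call is truly one step) and tracking the constants so that the $\log\log$ argument lines up with $\ep_*$ and $\min(\ep,\ep_*)$ in the stated bound.
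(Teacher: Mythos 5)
Your proposal is correct and follows essentially the same route as the paper's proof: the three structural facts you isolate are exactly the paper's Lemmas~\ref{lem:rAdaAPG-base} and~\ref{lem:rAdaAPG-sigma}, the telescoped summation of Theorem~\ref{thm:AdaAPG}(ii) is Lemma~\ref{lem:rAdaAPG-compl}, and the case analysis (geometric-sum collapse of $\sum_t\sqrt{1/\outersigma{t+1}}$ for $\rho>2$, the one-step superlinear recursion calibrated by $\ep_*$ for $\rho\in\,]1,2[$, and the finite-convergence argument for $\rho=1$) matches the paper's treatment, including Lemma~\ref{lem:rAdaAPG-sigma-2}. The "main obstacle" you flag resolves exactly as you anticipate: the first $\APGIter$ execution already satisfies the stopping criterion $\norm{g_{M_0}(x_1)}\leq\ep^{(t)}$, so $\AdaAPG$ terminates at $j=0$, $k=0$ and $\sigma$ is never reduced in that call.
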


\begin{corollary}\label{cor:rAdaAPG}
Let $\xinit \in \E$ and $\Linit\in [L_{\min},\gamma_{\rm inc}L_f]$. 
Assume that the H\"olderian error bound condition \eqref{HEB-review} holds.
After the initialization step {\rm (Line~\ref{rAdaAPG-init})} in $\rAdaAPG$, determine $\sigmainit$ by
\begin{equation}\label{sigma0-choice-HEB}
\sigmainit:=\frac{2\ep^{(0)} M}{(1+\sqrt{2}\beta)\norm{g_M(\outerXplus{0})}},
~~\text{where}~~
\{T_{M}(\outerXplus{0}),M,L\}\leftarrow \PGIter(\outerXplus{0},\outerL{0}).
\end{equation}
Then, $\rAdaAPG(\xinit,\Linit,\sigmainit,\ep)$ finds $\outerXinit{t} \in \E$ and $\outerM{t}>0$ such that $\norm{g_{\outerM{t}}(\outerXinit{t})}\leq \ep$ with iteration complexity at most the following quantities, where we denote
$$g_0:=g_{\outerM{0}}(\xinit),\quad \Delta_0 := \varphi(\xinit)-\varphi^*,$$
$\ep_*$ is defined by \eqref{rho-12-ep*-def},
and we regard $\theta,\gamma_{\rm inc},\gamma_{\rm reg},\beta,\frac{L_f}{L_{\min}}$ as constants\footnote{This assumption on $L_f/L_{\min}$ means that we know a good estimate $L_{\min}$ close to $L_f$ in advance.}.
\begin{equation}\label{rAdaAPG-compl-O}
\left\{
\begin{array}{ll}
O\left(\left[\log\frac{\norm{g_0}}{\ep} + \sqrt{\frac{L_f}{\kappa^{\frac{1}{\rho-1}}\ep^{\frac{\rho-2}{\rho-1}}}}\right]\log\frac{L_f}{\ep^{\frac{\rho-2}{\rho-1}}} \right) & (\rho>2),\\
O\left(\sqrt{\frac{L_f}{\kappa}}\log\frac{L_f}{\kappa}\log \frac{\norm{g_0}}{\ep}\right) & (\rho=2),\\
O\left(
\sqrt{\frac{L_f\Delta_0^{\frac{2-\rho}{\rho}}}{\kappa^{\frac{2}{\rho}}}}
\log {\frac{L_f\Delta_0^{\frac{2-\rho}{\rho}}}{\kappa^{\frac{2}{\rho}}}}
\log \frac{\norm{g_0}L_f^{\frac{\rho-1}{2-\rho}}}{\kappa^{\frac{1}{2-\rho}}}
+ (\log\frac{1}{\rho-1})^{-1}\log\log\frac{\kappa^{\frac{1}{2-\rho}}}{L_f^{\frac{\rho-1}{2-\rho}} \ep}
\right) & (\rho \in ]1,2[, \ep\leq \ep_*),\\
O\left(
\sqrt{\frac{L_f\Delta_0^{\frac{2-\rho}{\rho}}}{\kappa^{\frac{2}{\rho}}}}
\log {\frac{L_f\Delta_0^{\frac{2-\rho}{\rho}}}{\kappa^{\frac{2}{\rho}}}}
\log \frac{\norm{g_0}}{\ep}
\right) & (\rho \in ]1,2[, \ep\geq \ep_*),\\
O\left(\sqrt{\frac{L_f\Delta_0}{\kappa^2}}\log\frac{L_f\Delta_0}{\kappa^2}\log\frac{\norm{g_0}}{\max(\kappa,\ep)}\right) & (\rho=1).
\end{array}
\right.
\end{equation}
\end{corollary}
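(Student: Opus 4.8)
The plan is to derive each of the five big-$O$ expressions in \eqref{rAdaAPG-compl-O} by substituting the explicit initialization \eqref{sigma0-choice-HEB} into the corresponding bound $N\le N(\ep,\sigma_*,C)$ of Theorem~\ref{thm:rAdaAPG}, and then collapsing the formulas \eqref{N-def}, \eqref{bar-sigma-def}, and \eqref{simga*-def} to leading order under the stated convention that $\theta,\gamma_{\rm inc},\gamma_{\rm reg},\beta,L_f/L_{\min}$ are constants. Before the case analysis I would record the preliminary estimates produced by the second call $\{T_M(\outerXplus{0}),M,L\}\leftarrow\PGIter(\outerXplus{0},\outerL{0})$ that defines $\sigmainit$: Lemma~\ref{lem:PG-iter}~(ii) gives $L_{\min}\le M\le\gamma_{\rm inc}L_f$, so $M=\Theta(L_f)$, and the acceptance test \eqref{PG-iter-test} together with Lemma~\ref{lem:gmap}~(iv) gives $\dist(\outerXplus{0},X^*)\ge\norm{g_M(\outerXplus{0})}/(2M)$. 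Exactly as in the proof of Corollary~\ref{cor:AdaAPG}, this last inequality rewrites \eqref{sigma0-choice-HEB} as $\sigma(\outerXplus{0},\outerep{0})\le\sigmainit$ with $\sigma(\cdot,\cdot)$ from \eqref{sigma-th}, certifying that the first inner call $\AdaAPG(\outerXplus{0},\outerL{0},\sigmainit,\outerep{0})$ starts from an admissible regularization level.

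The crux of the reduction is to show that the chosen $\sigmainit$ places the regularization floor $\sigma_*$ of \eqref{simga*-def} at the correct scale, namely $\sigma_*=\Theta(\bar\sigma)$. For this I would combine $\sigma(\outerXplus{0},\outerep{0})\le\sigmainit$, the estimate $M=\Theta(L_f)$, and the H\"olderian error bound \eqref{HEB-review} (through $\dist(\outerXplus{0},X^*)\le(\Delta_0/\kappa)^{1/\rho}$, valid since $\varphi(\outerXplus{0})\le\varphi(\xinit)$ by Lemma~\ref{lem:PG-iter}~(i) places $\outerXplus{0}$ in the relevant level set) to sandwich $\sigmainit$ between fixed multiples of $\bar\sigma$; by \eqref{simga*-def} this yields $\sigma_*=\Theta(\bar\sigma)$ and simultaneously controls the two $\sigmainit$-dependent quantities in \eqref{N-def}, namely $\log_{\gamma_{\rm reg}}(\sigmainit/\sigma_*)=O(\log(L_f/\bar\sigma))$ and $\sqrt{1/\sigmainit}$. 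Once the scale of $\sigma_*$ is fixed, every occurrence of the recurring factor $\log\frac{\gamma_{\rm inc}L_f+\sigma_*}{\beta\sigma_*}$ becomes $\Theta(\log(L_f/\bar\sigma))$, the middle bracket of \eqref{N-def} is bounded by $\sqrt{L_f/\sigma_*}\,\Theta(\log(L_f/\bar\sigma))=\Theta(\sqrt{L_f/\bar\sigma}\log(L_f/\bar\sigma))$, and the first factor contributes $\Theta(\log_{1/\theta}(\norm{g_{\outerM{0}}(\xinit)}/\ep)\cdot\log(L_f/\bar\sigma))$.

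It then remains to evaluate $\bar\sigma$ and the case-specific coefficient $C$ per regime. For $\rho=2$, $\bar\sigma=\Theta(\kappa)$ is independent of $\ep$ and the dominant contribution is $C\sqrt{2\gamma_{\rm inc}L_f}\log\frac{\gamma_{\rm inc}L_f+\sigma_*}{\beta\sigma_*}$ with $C=\Theta(\sqrt{1/\kappa}\,(1+\log_{1/\theta}(\norm{g_{\outerM{0}}(\xinit)}/\ep)))$, giving the $\rho=2$ line. For $\rho>2$, $\bar\sigma=\Theta(\kappa^{1/(\rho-1)}\ep^{(\rho-2)/(\rho-1)})$; in the subcase $\sigmainit\ge\bar\sigma$ the leading part of $C$ in Theorem~\ref{thm:rAdaAPG}~(ii) is $\Theta(\sqrt{1/\bar\sigma})$, so the $C$-term and the middle term both become $\Theta(\sqrt{L_f/\bar\sigma}\log(L_f/\bar\sigma))$ while the first factor yields the additive $\log(\norm{g_{\outerM{0}}(\xinit)}/\ep)\log(L_f/\ep^{(\rho-2)/(\rho-1)})$ piece, and in the subcase $\sigmainit<\bar\sigma$ (where $\outersigma{t}\equiv\sigmainit$) the simpler $C$ is handled identically using $\sigma_*=\sigmainit=\Theta(\bar\sigma)$. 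For $\rho\in{]1,2[}$ and $\rho=1$, $\bar\sigma$ is again $\ep$-independent (of order $\kappa^{2/\rho}\Delta_0^{-(2-\rho)/\rho}$, resp.\ $\kappa^2/\Delta_0$), and I would split on whether $\ep\ge\ep_*$ or $\ep<\ep_*$ with $\ep_*$ from \eqref{rho-12-ep*-def}: for $\rho\in{]1,2[}$ and $\ep\ge\ep_*$ the superlinear $\log\log$ correction in \eqref{N-bd-superlinear} vanishes (since $\min(\ep,\ep_*)=\ep_*$ makes its argument $1/\theta^{(\rho-1)/(2-\rho)}$) and $N(\max(\ep,\ep_*),\cdot,\cdot)=N(\ep,\cdot,\cdot)$, whereas for $\ep<\ep_*$ the retained $N(\ep_*,\cdot,\cdot)$ term produces the $\log(\norm{g_{\outerM{0}}(\xinit)}/\ep_*)$ factor (rewritten via $\ep_*=\Theta(\kappa^{1/(2-\rho)}/L_f^{(\rho-1)/(2-\rho)})$) plus the separate $\log\log$ term; the $\rho=1$ finite-termination claim for $\ep<\ep_*$ follows directly from Theorem~\ref{thm:rAdaAPG}~(iv).

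The main obstacle I anticipate is the $\rho>2$ bookkeeping. There $\bar\sigma$ depends on $\ep$, the coefficient $C$ in Theorem~\ref{thm:rAdaAPG}~(ii) carries geometric-series factors such as $(1-\sqrt\theta^{(\rho-2)/(\rho-1)})^{-1}$ that blow up as $\rho\downarrow 2$, and one must verify both that these factors are absorbed into the (fixed-$\rho$) big-$O$ constant and that the nominally dominant terms truly dominate---in particular that the $(\log(L_f/\bar\sigma))^2$ contribution arising from $\log_{\gamma_{\rm reg}}(\sigmainit/\sigma_*)$ is swallowed by $\sqrt{L_f/\bar\sigma}\log(L_f/\bar\sigma)$. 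Establishing the two-sided comparison $\sigma_*=\Theta(\bar\sigma)$ from the single explicit choice \eqref{sigma0-choice-HEB}, uniformly across the two subcases of part (ii), is the other delicate point; everything else is routine substitution and absorption of constants.
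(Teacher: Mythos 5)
Your plan follows the paper's architecture (reduce to Theorem~\ref{thm:rAdaAPG}, pin the floor $\sigma_*$ of \eqref{simga*-def} at the scale of $\bar\sigma$, then do per-$\rho$ bookkeeping, which you handle correctly, including the absorption of $\log_{\gamma_{\rm reg}}(\sigmainit/\sigma_*)$ into $\sqrt{L_f/\bar\sigma}\log(L_f/\bar\sigma)$ and the vanishing of the $\log\log$ term when $\ep\geq\ep_*$), but the step you yourself call the crux has a genuine gap. First, the two-sided claim that $\sigmainit$ is ``sandwiched between fixed multiples of $\bar\sigma$'' is false: for $\rho>2$, $\bar\sigma$ in \eqref{bar-sigma-def} is proportional to $\ep^{\frac{\rho-2}{\rho-1}}$ and tends to $0$ as $\ep\downarrow 0$, while $\sigmainit$ in \eqref{sigma0-choice-HEB} does not depend on $\ep$ at all, so no bound $\sigmainit\leq C\bar\sigma$ with constant $C$ can hold. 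What is true, and all that is needed, is the pair of one-sided bounds $\bar\sigma\leq\sigmainit\leq 2\gamma_{\rm inc}L_f/(1+\sqrt{2}\beta)$ (i.e.\ \eqref{rAdaAPG-simgainit-bound} together with $\sigma(\outerXplus{0},\outerep{0})\geq\bar\sigma$): the lower bound forces the second branch of \eqref{simga*-def}, giving $\sigma_*=\bar\sigma/\gamma_{\rm reg}=\Theta(\bar\sigma)$, and the upper bound gives $\log(\sigmainit/\sigma_*)=O(\log(L_f/\bar\sigma))$. (With the correct lower bound, your subcase $\sigmainit<\bar\sigma$ for $\rho>2$ never occurs and needs no treatment.)

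Second, and more seriously, your proposed derivation of the lower bound cannot produce it. From $\dist(\outerXplus{0},X^*)\leq(\Delta_0/\kappa)^{1/\rho}$ and $\sigmainit\geq\sigma(\outerXplus{0},\outerep{0})$ you only obtain $\sigmainit\geq\frac{\theta\norm{g_0}}{1+\sqrt{2}\beta}\,\kappa^{1/\rho}\Delta_0^{-1/\rho}$, a quantity with a different $\kappa$-exponent and a spurious $\Delta_0$-dependence; comparing it with $\bar\sigma$ (for $\rho\geq 2$, $\bar\sigma\propto\kappa^{\frac{1}{\rho-1}}\ep^{\frac{\rho-2}{\rho-1}}$) would require an inequality of the form $\Delta_0\lesssim\kappa^{-\frac{1}{\rho-1}}\norm{g_0}^{\frac{\rho}{\rho-1}}$, which is not available: the gradient mapping $g_0=g_{\outerM{0}}(\xinit)$ controls subgradients at $\outerXplus{0}$ (Lemma~\ref{lem:gmap}~(iii)), hence $\varphi(\outerXplus{0})-\varphi^*$ via \eqref{rAdaAPG-obj-bound}, but not $\Delta_0=\varphi(\xinit)-\varphi^*$, which can be arbitrarily larger than $\varphi(\outerXplus{0})-\varphi^*$ when $\varPsi\not\equiv 0$ (a single proximal step can make huge progress, e.g.\ with an indicator-type $\varPsi$ and steep $f$; only for $\varPsi\equiv 0$ does the {\L}ojasiewicz inequality at $\xinit$ rescue your route). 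The paper's proof instead invokes Lemma~\ref{lem:rAdaAPG-sigma}~(ii), whose point is precisely to convert the denominator $\dist(\outerXplus{0},X^*)$ of $\sigma(\outerXplus{0},\outerep{0})$ into a power of $\norm{g_0}$ via the {\L}ojasiewicz-type bounds \eqref{rAdaAPG-dist-bound}--\eqref{rAdaAPG-obj-bound} of Lemma~\ref{lem:rAdaAPG-base}~(iv), so that it cancels against the numerator $\outerep{0}=\theta\norm{g_0}$ and leaves exactly $\bar\sigma$. Without this ingredient, $\sigma_*=\Omega(\bar\sigma)$ is unjustified, and with it every $\sqrt{L_f/\sigma_*}$ and $C\sqrt{L_f}$ term in your final estimates; the rest of your argument is sound but rests on this missing step.
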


Let us observe the consequences of Corollary~\ref{cor:rAdaAPG}.
Notice that
the iteration complexity bounds \eqref{rAdaAPG-compl-O} in the cases $\rho=2$ and $\rho>2$ match the lower bounds \eqref{low-compl-grad-HEB} up to a logarithmic factor.
Therefore, $\rAdaAPG$ achieves the near optimality for the class $\mathcal{F}(x_0,R,L,\kappa,\rho)$.

Note that
the choice \eqref{sigma0-choice-HEB} of $\sigmainit$ to obtain the above iteration complexity is independent of the target tolerance $\ep$.
Therefore, if we consider $\rAdaAPG$ with $\ep=0$ (recall that $\ep$ only affects the stopping criterion), it yields a convergent algorithm with near optimality.
In summary,
to find an approximate solution $x$ with $\norm{g_L(x)}\leq \ep$, the proposed method $\rAdaAPG$ is able to attain the following iteration complexity (and the rate of convergence).
\begin{itemize}
\item Case $\rho=1$. The algorithm finds an optimal solution with a finite iteration complexity, i.e., the iteration complexity is independent of $\ep$ (if $\ep$ is sufficiently small).
\item Case $\rho\in ]1,2[$. The iteration complexity is $O(\log\log\ep^{-1})$ (superlinear convergence).
\item Case $\rho=2$. The iteration complexity is $O(\log\ep^{-1})$ (linear convergence).
\item Case $\rho>2$. The iteration complexity is $O(\ep^{-\frac{\rho-2}{2(\rho-1)}}\log\ep^{-1})$ (sublinear convergence).
\end{itemize}

The algorithm $\rAdaAPG$ with $\ep=0$ can also provide an iteration complexity result with respect to the measure $\varphi(\cdot)-\varphi^*$.
As we prove the inequality \eqref{rAdaAPG-obj-bound}, the following relation holds if $\rho \ne 1$.
\begin{equation}\label{objval-gmapnorm-rel}
\varphi(\outerXplus{t})-\varphi^* \leq \frac{1}{\kappa^{\frac{1}{\rho-1}}}\left(\frac{L_f}{L_{\min}}+1\right)^{\frac{\rho}{\rho-1}}\norm{g_{\outerM{t}}(\outerXinit{t})}^{\frac{\rho}{\rho-1}}.
\end{equation}
This means that, given $\delta>0$, we have the following implication:
\begin{equation*}
\norm{g_{\outerM{t}}(\outerXinit{t})} \leq \ep:=\kappa^{\frac{1}{\rho}}\left(\frac{L_f}{L_{\min}}+1\right)^{-1} \delta^{\frac{\rho-1}{\rho}} ~~\Longrightarrow~~
\varphi(\outerXplus{t})-\varphi^* \leq \delta.
\end{equation*}
Substituting this $\ep$ to our complexity bound \eqref{rAdaAPG-compl-O}, we also obtain an iteration complexity bound under the measure $\varphi(\cdot)-\varphi^*$, which is nearly optimal in view of the lower complexity bound \eqref{low-compl-obj-HEB}.
Although it enjoys an adaptive and nearly optimal convergence, the proposed method does not provide a stopping criterion for the measure $\varphi(\cdot)-\varphi^*$ since the right hand side of \eqref{objval-gmapnorm-rel} is not verifiable unless we know $\kappa$ and $\rho$.

\subsubsection{Proof of the Main Results}\label{sec:proof-main-result}

Here, we complete the proofs of Theorem~\ref{thm:rAdaAPG} and Corollary~\ref{cor:rAdaAPG}.
We prepare some lemmas below.

\begin{lemma}\label{lem:rAdaAPG-base}
Assume that the H\"olderian error bound condition \eqref{HEB-review} holds.
In the execution of $\rAdaAPG$, the following assertions hold.
\begin{enumerate}[(i)]
\item $\outerL{t},\outerM{t} \in [L_{\min},\gamma_{\rm inc}L_f]$ for all $t\geq 0$.
\item $\varphi(\outerXplus{t+1}) \leq \varphi(\outerXplus{t}) \leq \cdots \leq \varphi(\outerXplus{0}) \leq \varphi(\xinit)$ for all $t\geq 0$.
\item $t \leq \log_{1/\theta}\dfrac{\norm{g_{\outerM{0}}(\xinit)}}{\norm{g_{\outerM{t}}(\outerXinit{t})}}$ for each $t\geq 0$.
\item Whenever $\outerXplus{t} \not\in X^*$, we have
\begin{equation}\label{rAdaAPG-dist-bound}
\dist(\outerXplus{t},X^*)^{\rho-1} \leq \frac{1}{\kappa}\left(\frac{L_f}{L_{\min}}+1\right)\norm{g_{\outerM{t}}(\outerXinit{t})},
\end{equation}
\begin{equation}\label{rAdaAPG-obj-bound}
(\varphi(\outerXplus{t})-\varphi^*)^{\rho-1} \leq \frac{1}{\kappa}\left(\frac{L_f}{L_{\min}}+1\right)^\rho\norm{g_{\outerM{t}}(\outerXinit{t})}^\rho.
\end{equation}
\end{enumerate}
\end{lemma}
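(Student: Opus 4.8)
The plan is to prove the four assertions in the order stated, since each relies on its predecessors. For (i), I would induct on $t$, showing that every Lipschitz estimate stays in $[L_{\min},\gamma_{\rm inc}L_f]$: the input $\Linit$ lies in this interval by hypothesis, feeding it into $\PGIter$ at Line~\ref{rAdaAPG-init} keeps $\outerM{0},\outerL{0}$ there by Lemma~\ref{lem:PG-iter}~(ii), and in the inductive step $\AdaAPG$ receives the admissible estimate $\outerL{t}$ and—since its inner engine is $\APGIter$—Proposition~\ref{prop:APG}~(i) confines all internal $M_k,L_k$, hence the outputs $\outerM{t+1},\outerL{t+1}$, to the same interval. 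For (ii) I would first note $\varphi(\outerXplus{0})\leq\varphi(\xinit)$ from Lemma~\ref{lem:PG-iter}~(i). Then, running $\AdaAPG$ from $\outerXplus{t}$ returns $\outerXinit{t+1}$ with $\varphi(\outerXinit{t+1})\leq\varphi(\outerXplus{t})$ by Proposition~\ref{prop:APG}~(iv); since $\rAdaAPG$ activates the optional test \eqref{APG-iter-test-c} and $\outerXplus{t+1}=T_{\outerM{t+1}}(\outerXinit{t+1})$, that test gives $\varphi(\outerXplus{t+1})\leq\varphi(\outerXinit{t+1})$, and chaining these produces the monotone sequence.

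For (iii), the point is that $\rAdaAPG$ sets the inner tolerance to $\outerep{t}=\theta\norm{g_{\outerM{t}}(\outerXinit{t})}$ and calls $\AdaAPG(\outerXplus{t},\outerL{t},\outersigma{t},\outerep{t})$, which by Theorem~\ref{thm:AdaAPG}~(i) terminates and delivers $\outerXinit{t+1},\outerM{t+1}$ satisfying $\norm{g_{\outerM{t+1}}(\outerXinit{t+1})}\leq\outerep{t}$. This yields the one-step contraction $\norm{g_{\outerM{t+1}}(\outerXinit{t+1})}\leq\theta\norm{g_{\outerM{t}}(\outerXinit{t})}$; iterating from $\outerXinit{0}=\xinit$ gives $\norm{g_{\outerM{t}}(\outerXinit{t})}\leq\theta^t\norm{g_{\outerM{0}}(\xinit)}$, and taking $\log_{1/\theta}$ and rearranging (valid since $\theta\in]0,1[$) furnishes the claimed bound on $t$.

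The substance lies in (iv), which couples the gradient mapping norm to the HEB condition. The structural fact I would isolate first is that $\outerXplus{t}=T_{\outerM{t}}(\outerXinit{t})$ for every $t$ (for $t=0$ via $\PGIter$, and for $t\geq1$ from the output format of $\AdaAPG$). Granting this, Lemma~\ref{lem:gmap}~(iii) applied at $y=\outerXinit{t}$ and $L=\outerM{t}$ exhibits a subgradient $\nabla{f}(\outerXplus{t})-\nabla{f}(\outerXinit{t})+g_{\outerM{t}}(\outerXinit{t})\in\partial\varphi(\outerXplus{t})$ of norm at most $(\frac{L_f}{\outerM{t}}+1)\norm{g_{\outerM{t}}(\outerXinit{t})}\leq(\frac{L_f}{L_{\min}}+1)\norm{g_{\outerM{t}}(\outerXinit{t})}$, where the final inequality uses $\outerM{t}\geq L_{\min}$ from (i). Since (ii) places $\outerXplus{t}\in\lev_\varphi(\varphi(\xinit))$ and we assume $\outerXplus{t}\notin X^*$, both inequalities of \eqref{Loj-ineq} in Lemma~\ref{lem:HEB} apply at $x=\outerXplus{t}$; bounding $\inf\{\norm{g}:g\in\partial\varphi(\outerXplus{t})\}$ by the subgradient norm above and dividing by $\kappa$ gives \eqref{rAdaAPG-dist-bound}, while raising the second inequality of \eqref{Loj-ineq} to the power $\rho$ before dividing gives \eqref{rAdaAPG-obj-bound}. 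I expect the main obstacle to be purely bookkeeping in this last step: one must keep straight that both the subgradient estimate and the error-bound inequality are evaluated at $\outerXplus{t}$ (not at $\outerXinit{t}$), and verify the level-set membership so that Lemma~\ref{lem:HEB} is legitimately applicable.
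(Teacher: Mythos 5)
Your proposal is correct and follows essentially the same route as the paper's own proof: Lemma~\ref{lem:PG-iter} and Proposition~\ref{prop:APG}~(i) inductively for (i), Lemma~\ref{lem:PG-iter}~(i) together with the optional test \eqref{APG-iter-test-c} and Proposition~\ref{prop:APG}~(iv) for (ii), the tolerance recurrence $\norm{g_{\outerM{t+1}}(\outerXinit{t+1})}\leq\theta\norm{g_{\outerM{t}}(\outerXinit{t})}$ for (iii), and Lemma~\ref{lem:gmap}~(iii) combined with both inequalities of \eqref{Loj-ineq} in Lemma~\ref{lem:HEB}, plus $\outerM{t}\geq L_{\min}$ from (i), for (iv). The only cosmetic difference is that you attribute the output guarantee of $\AdaAPG$ in (iii) to Theorem~\ref{thm:AdaAPG}~(i), whereas it is simply the stopping criterion built into Algorithm~\ref{alg:AdaAPG}; this does not affect validity.
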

\begin{proof}
(i)
Since $\Linit \in [L_{\min}, \gamma_{\rm inc}L_f]$, it follows $\outerL{0},\outerM{0}\in [L_{\min}, \gamma_{\rm inc}L_f]$ by Lemma~\ref{lem:PG-iter}~(ii).
Then, using Proposition~\ref{prop:APG} (i) inductively, we obtain (i).

(ii)
For $t=0$, we have $\varphi(\outerXplus{0})=\varphi(T_{\outerM{0}}(\xinit))\leq \varphi(\xinit)$ by Lemma~\ref{lem:PG-iter} (i).
Moreover, the criterion \eqref{APG-iter-test-c} and Proposition~\ref{prop:APG}~(iv), applied to the subroutine $\AdaAPG$ in $\rAdaAPG$, yield
${\varphi(\outerXplus{t+1}) \leq \varphi(\outerXinit{t+1})}$ and $\varphi(\outerXinit{t+1}) \leq \varphi(\outerXplus{t})$, respectively. This shows $\varphi(\outerXplus{t+1}) \leq \varphi(\outerXplus{t})$.

(iii)
Since the recurrence
$
\ep^{(t)}/\theta=\norm{g_{\outerM{t}}(\outerXinit{t})} \leq \ep^{(t-1)}
$
holds for $t\geq 1$, we have
$
\ep^{(0)} \geq \ep^{(t)}/\theta^t
$
for each $t\geq 0$.
Therefore, we conclude
$
t \leq \log_{1/\theta}\frac{\ep^{(0)}}{\ep^{(t)}} = \log_{1/\theta}\frac{\norm{g_{\outerM{0}}(\xinit)}}{\norm{g_{\outerM{t}}(\outerXinit{t})}}.
$

(iv)
Since $\outerXplus{t}=T_{\outerM{t}}(\outerXinit{t})$, Lemma~\ref{lem:gmap} (iii) implies that $g_t:=\nabla{f}(\outerXplus{t})-\nabla{f}(\outerXinit{t})+g_{\outerM{t}}(\outerXinit{t})$ belongs to $\partial{\varphi}(\outerXplus{t})$.
Then,
Lemma~\ref{lem:HEB} shows (note that $\outerXplus{t}$ belongs to $\lev_\varphi(\varphi(\xinit))\setminus X^*$ by (ii))
\begin{align*}
\kappa\dist(\outerXplus{t},X^*)^{\rho-1} &\leq \norm{g_t}
\leq \left(\frac{L_f}{\outerM{t}}+1\right)\norm{g_{\outerM{t}}(\outerXinit{t})}
\leq \left(\frac{L_f}{L_{\min}}+1\right)\norm{g_{\outerM{t}}(\outerXinit{t})},
\end{align*}
where the second inequality is due to Lemma~\ref{lem:gmap} (iii) and the last follows by (i). This shows the assertion \eqref{rAdaAPG-dist-bound}. Similarly, \eqref{rAdaAPG-obj-bound} can be obtained using the latter inequality of \eqref{Loj-ineq}.
\end{proof}

The following lemma plays an essential role to derive our iteration complexity results.

\begin{lemma}\label{lem:rAdaAPG-compl}
Let $N^{(t)}$ be the number of executions of $\APGIter$ at the $t$-th outer loop of $\rAdaAPG$.
Assume that, for some $T_*\geq 0$, $\sigma_*>0$, and $\ep_*>0$, we have
$$
\outersigma{t+1}\geq \sigma_*\quad (t=0,\ldots,T_*)
\quad
\text{and}
\quad
\norm{g_{\outerM{T_*}}(\outerXinit{T_*})} \geq \ep_*.
$$
Then, under the definition \eqref{N-def} of $N(\cdot,\cdot,\cdot)$, the following inequality holds.
$$
\sum_{t=0}^{T_*}N^{(t)} \leq N(\ep_*,\sigma_*,C) \leq N(\ep_*,\sigma_*,C_*),
$$
where
$$
C := \sum_{t=0}^{T_*}\sqrt{\frac{1}{\outersigma{t+1}}}
\leq C_*:= \sqrt{\frac{1}{\sigma_*}}\left(1+\log_{1/\theta}\frac{\norm{g_{\outerM{0}}(\xinit)}}{\ep_*}\right).
$$
\end{lemma}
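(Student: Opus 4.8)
The plan is to bound each per-outer-loop count $N^{(t)}$ by the complexity estimate of $\AdaAPG$ and then sum over $t=0,\ldots,T_*$. At the $t$-th outer loop, $\rAdaAPG$ invokes $\AdaAPG(\outerXplus{t},\outerL{t},\outersigma{t},\outerep{t})$, which starts from the regularization parameter $\outersigma{t}$ and terminates producing $\outersigma{t+1}$. Since $\AdaAPG$ only ever divides its regularization parameter by $\gamma_{\rm reg}$, the sequence $\{\outersigma{t}\}$ is non-increasing with $\outersigma{0}=\sigmainit$. Applying Theorem~\ref{thm:AdaAPG}~(ii) with the identification $\sigma_0\leftrightarrow\outersigma{t}$ and $\sigma_\ell\leftrightarrow\outersigma{t+1}$ gives, for each $t$,
\begin{align*}
N^{(t)} \leq{} & \frac{\sqrt{2\gamma_{\rm inc}L_f}}{\sqrt{\gamma_{\rm reg}}-1}\left[\sqrt{\gamma_{\rm reg}}\sqrt{\tfrac{1}{\outersigma{t+1}}} - \sqrt{\tfrac{1}{\outersigma{t}}}\right]\Lambda_t\\
& + \left(1+\log_{\gamma_{\rm reg}}\tfrac{\outersigma{t}}{\outersigma{t+1}}\right)(2+\Lambda_t),
\end{align*}
where I abbreviate $\Lambda_t := \log\frac{\gamma_{\rm inc}L_f+\outersigma{t+1}}{\beta\outersigma{t+1}}$.

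The structural facts I would exploit next are: (a) the map $\sigma \mapsto \log\frac{\gamma_{\rm inc}L_f+\sigma}{\beta\sigma}$ is decreasing, so the hypothesis $\outersigma{t+1}\geq\sigma_*$ yields $\Lambda_t \leq \Lambda_* := \log\frac{\gamma_{\rm inc}L_f+\sigma_*}{\beta\sigma_*}$; and (b) because $\{\outersigma{t}\}$ is non-increasing and $\sqrt{\gamma_{\rm reg}}>1$, both the bracket $\sqrt{\gamma_{\rm reg}}\sqrt{1/\outersigma{t+1}}-\sqrt{1/\outersigma{t}}$ and the factor $1+\log_{\gamma_{\rm reg}}(\outersigma{t}/\outersigma{t+1})$ are nonnegative. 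This lets me replace every $\Lambda_t$ by $\Lambda_*$ without flipping an inequality. The ``geometric'' part then telescopes: with $C=\sum_{t=0}^{T_*}\sqrt{1/\outersigma{t+1}}$,
\[
\sum_{t=0}^{T_*}\left[\sqrt{\gamma_{\rm reg}}\sqrt{\tfrac{1}{\outersigma{t+1}}} - \sqrt{\tfrac{1}{\outersigma{t}}}\right] = (\sqrt{\gamma_{\rm reg}}-1)C - \sqrt{\tfrac{1}{\sigmainit}} + \sqrt{\tfrac{1}{\outersigma{T_*+1}}},
\]
and bounding $\sqrt{1/\outersigma{T_*+1}}\leq\sqrt{1/\sigma_*}$ reproduces exactly the second and third terms of $N(\ep_*,\sigma_*,C)$. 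Likewise the telescoping identity $\sum_{t=0}^{T_*}\log_{\gamma_{\rm reg}}(\outersigma{t}/\outersigma{t+1}) = \log_{\gamma_{\rm reg}}(\sigmainit/\outersigma{T_*+1}) \leq \log_{\gamma_{\rm reg}}(\sigmainit/\sigma_*)$ controls the ``count'' part.

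The only remaining ingredient is a bound on $T_*+1$. For this I would invoke Lemma~\ref{lem:rAdaAPG-base}~(iii), which gives $T_* \leq \log_{1/\theta}\frac{\norm{g_{\outerM{0}}(\xinit)}}{\norm{g_{\outerM{T_*}}(\outerXinit{T_*})}}$; together with the hypothesis $\norm{g_{\outerM{T_*}}(\outerXinit{T_*})}\geq\ep_*$ this yields $T_*+1 \leq 1+\log_{1/\theta}\frac{\norm{g_{\outerM{0}}(\xinit)}}{\ep_*}$, matching the first factor of the leading term of $N(\ep_*,\sigma_*,C)$. Assembling the three pieces gives $\sum_{t=0}^{T_*}N^{(t)}\leq N(\ep_*,\sigma_*,C)$. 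The second inequality is then immediate, since $N(\ep_*,\sigma_*,\cdot)$ is increasing in its last argument (which enters only through the positive term $C\sqrt{2\gamma_{\rm inc}L_f}\Lambda_*$) and $C=\sum_{t=0}^{T_*}\sqrt{1/\outersigma{t+1}}\leq (T_*+1)\sqrt{1/\sigma_*}\leq C_*$ by $\outersigma{t+1}\geq\sigma_*$ and the same bound on $T_*+1$. The main obstacle is organizational rather than conceptual: one must carefully pair each telescoped remainder with the correct term of the definition~\eqref{N-def}, and verify the sign and monotonicity conditions that justify replacing the loop-dependent $\Lambda_t$ by the single constant $\Lambda_*$.
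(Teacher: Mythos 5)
Your proposal is correct and follows essentially the same route as the paper's own proof: the per-loop bound from Theorem~\ref{thm:AdaAPG}~(ii) with the identification $\sigma_0\leftrightarrow\outersigma{t}$, $\sigma_\ell\leftrightarrow\outersigma{t+1}$, the replacement of the loop-dependent logarithmic factors by the single factor at $\sigma_*$, the two telescoping identities (splitting $\sqrt{\gamma_{\rm reg}}\,a_{t+1}-a_t$ into $(\sqrt{\gamma_{\rm reg}}-1)a_{t+1}+(a_{t+1}-a_t)$), and the bound on $T_*$ via Lemma~\ref{lem:rAdaAPG-base}~(iii) combined with $\norm{g_{\outerM{T_*}}(\outerXinit{T_*})}\geq\ep_*$. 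The only difference is cosmetic: you spell out the sign and monotonicity checks (nonnegativity of the brackets, monotone decrease of $\sigma\mapsto\log\frac{\gamma_{\rm inc}L_f+\sigma}{\beta\sigma}$) that the paper uses implicitly.
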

\begin{proof}
Since $\outersigma{t+1}\geq \sigma_*$ for each $t=0,\ldots,T_*$, Theorem~\ref{thm:AdaAPG} (ii) gives the following bound for $t=0,\ldots,T_*$:
\begin{align*}
N^{(t)}
&\leq 
\left(1+\log_{\gamma_{\rm reg}}\frac{\outersigma{t}}{\outersigma{t+1}}\right)\left(2+\log\frac{\gamma_{\rm inc}L_f+\outersigma{t+1}}{\beta\outersigma{t+1}}\right)\\
&\quad +
\frac{\sqrt{2\gamma_{\rm inc}L_f}}{\sqrt{\gamma_{\rm reg}}-1}\left[\sqrt{\gamma_{\rm reg}}\sqrt{\frac{1}{\outersigma{t+1}}} - \sqrt{\frac{1}{\outersigma{t}}}\right]\log\frac{\gamma_{\rm inc}L_f+\outersigma{t+1}}{\beta\outersigma{t+1}}\\
&\leq
\left(1+\log_{\gamma_{\rm reg}}\frac{\outersigma{t}}{\outersigma{t+1}}\right)\left(2+\log\frac{\gamma_{\rm inc}L_f+\sigma_*}{\beta\sigma_*}\right)\\
&\quad +
\frac{\sqrt{2\gamma_{\rm inc}L_f}}{\sqrt{\gamma_{\rm reg}}-1}\left[\sqrt{\gamma_{\rm reg}}\sqrt{\frac{1}{\outersigma{t+1}}} - \sqrt{\frac{1}{\outersigma{t}}}\right]\log\frac{\gamma_{\rm inc}L_f+\sigma_*}{\beta\sigma_*}.
\end{align*}
By Lemma~\ref{lem:rAdaAPG-base} (iii), $T_*$ is bounded by
$$
T_*\leq \log_{1/\theta}\frac{\norm{g_{\outerM{0}}(\xinit)}}{\norm{g_{\outerM{T_*}}(\outerXinit{T_*})}} \leq \log_{1/\theta}\frac{\norm{g_{\outerM{0}}(\xinit)}}{\ep_*}.
$$
To estimate the sum $\sum_{t=0}^{T_*}N^{(t)}$, note that
$$
\sum_{t=0}^{T_*}\left(1+\log_{\gamma_{\rm reg}}\frac{\outersigma{t}}{\outersigma{t+1}}\right) = 1+T_*+\log_{\gamma_{\rm reg}}\frac{\outersigma{0}}{\outersigma{T_*+1}} \leq 1+ \log_{1/\theta}\frac{\norm{g_{\outerM{0}}(\xinit)}}{\ep_*} + \log_{\gamma_{\rm reg}}\frac{\outersigma{0}}{\sigma_*},
$$
and
\begin{align*}
\sum_{t=0}^{T_*}
\left[\sqrt{\gamma_{\rm reg}}\sqrt{\frac{1}{\outersigma{t+1}}} - \sqrt{\frac{1}{\outersigma{t}}}\right]
&=
(\sqrt{\gamma_{\rm reg}}-1)\sum_{t=0}^{T_*}\sqrt{\frac{1}{\outersigma{t+1}}}
+\sum_{t=0}^{T_*}
\left[\sqrt{\frac{1}{\outersigma{t+1}}} - \sqrt{\frac{1}{\outersigma{t}}}\right]\\
&=
(\sqrt{\gamma_{\rm reg}}-1)\sum_{t=0}^{T_*}\sqrt{\frac{1}{\outersigma{t+1}}}
+
\sqrt{\frac{1}{\outersigma{T_*+1}}} - \sqrt{\frac{1}{\outersigma{0}}}.
\end{align*}
Therefore, $\sum_{t=0}^{T_*}N^{(t)}$ is bounded by
\begin{align*}
\sum_{t=0}^{T_*}N^{(t)}
&\leq \left(1+ \log_{1/\theta}\frac{\norm{g_{\outerM{0}}(\xinit)}}{\ep_*} + \log_{\gamma_{\rm reg}}\frac{\outersigma{0}}{\sigma_*}\right)\left(2+\log\frac{\gamma_{\rm inc}L_f+\sigma_*}{\beta\sigma_*}\right)\\
&\quad +
\frac{\sqrt{2\gamma_{\rm inc}L_f}}{\sqrt{\gamma_{\rm reg}}-1}\left[
\sqrt{\frac{1}{\sigma_*}} - \sqrt{\frac{1}{\outersigma{0}}}\right]\log\frac{\gamma_{\rm inc}L_f+\sigma_*}{\beta\sigma_*} \\
&\quad + \sqrt{2\gamma_{\rm inc}L_f} \sum_{t=0}^{T_*}\sqrt{\frac{1}{\outersigma{t+1}}} \log\frac{\gamma_{\rm inc}L_f+\sigma_*}{\beta\sigma_*}\\
&=N(\ep_*,\sigma_*,C).
\end{align*}
Finally, $C$ has the following bound:
$$
C = \sum_{t=0}^{T_*}\sqrt{\frac{1}{\outersigma{t+1}}}
\leq \sqrt{\frac{1}{\sigma_*}} (1+T_*)\leq
\sqrt{\frac{1}{\sigma_*}}\left(1+\log_{1/\theta}\frac{\norm{g_{\outerM{0}}(\xinit)}}{\ep_*}\right)=C_*,
$$
which also concludes $N(\ep_*,\sigma_*,C) \leq N(\ep_*,\sigma_*,C_*)$.
\end{proof}

In view of this lemma,
the complexity analysis boils down to analyze lower bounds of $\outersigma{t}$ as we discuss next.

\begin{lemma}\label{lem:rAdaAPG-sigma}
Assume that the H\"olderian error bound condition \eqref{HEB-review} holds.
Suppose that $\rAdaAPG$ terminated with the stopping criterion at the $(T+1)$-th outer loop for some $T\geq 0$.
Let $\bar\sigma$ be defined by \eqref{bar-sigma-def}.
\begin{enumerate}[(i)]
\item
For each $t=0,\ldots,T$, we have
$$
\outersigma{t+1}
\left\{
\begin{array}{ll}
=\outersigma{t}&(\outersigma{t}\leq \sigma(\outerXplus{t},\ep^{(t)})),\\
\geq\sigma(\outerXplus{t},\ep^{(t)})/\gamma_{\rm reg}&(\text{otherwise}),
\end{array}
\right.
$$
where $\sigma(\cdot,\cdot)$ is defined by \eqref{sigma-th}. 

\item It follows that $\sigma(\outerXplus{t},\ep^{(t)}) \geq \bar\sigma$ for all $t=0,\ldots,T$.
When $\rho \geq 2$, we further obtain
\begin{equation}\label{sigma-th-t}
\sigma(\outerXplus{t},\ep^{(t)}) \geq \frac{\theta^{\frac{1}{\rho-1}}}{1+\sqrt{2}\beta}\cdot
\kappa^{\frac{1}{\rho-1}}\left(\frac{L_f}{L_{\min}}+1\right)^{-\frac{1}{\rho-1}}
(\ep^{(t)})^{\frac{\rho-2}{\rho-1}}
 \geq \bar\sigma,
\end{equation}
for each $t=0,\ldots,T$.
\item
For each $t=0,\ldots,T+1$, we obtain
$$
\outersigma{t} \left\{
\begin{array}{ll}
=\sigmainit & (\sigmainit \leq \bar\sigma),\\
\geq\bar\sigma/\gamma_{\rm reg} & (\text{otherwise}).
\end{array}
\right.
$$
\end{enumerate}
\end{lemma}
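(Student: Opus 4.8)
The plan is to establish the three parts in order, deriving (i) directly from the known behaviour of the inner $\AdaAPG$ subroutine, (ii) from the H\"olderian error bound, and (iii) by a short induction that merely glues (i) and (ii) together. For part (i) I would specialize Theorem~\ref{thm:AdaAPG}~(i) to the subroutine call $\AdaAPG(\outerXplus{t},\outerL{t},\outersigma{t},\outerep{t})$ made at the $t$-th outer loop: this subroutine starts from the regularization parameter $\outersigma{t}$ on the point $\outerXplus{t}$ with tolerance $\outerep{t}$, and returns $\outersigma{t+1}=\outersigma{t}/\gamma_{\rm reg}^{\ell}$ for the terminating inner index $\ell\ge 0$. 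Theorem~\ref{thm:AdaAPG}~(i) tells us that this subroutine stops at its $j=0$ loop precisely when $\outersigma{t}\le\sigma(\outerXplus{t},\outerep{t})$, yielding $\outersigma{t+1}=\outersigma{t}$, and that every reached value $\sigma_j$ with $j>0$ satisfies $\sigma_j\ge\sigma(\outerXplus{t},\outerep{t})/\gamma_{\rm reg}$, which gives $\outersigma{t+1}\ge\sigma(\outerXplus{t},\outerep{t})/\gamma_{\rm reg}$ in the complementary case. This is exactly the dichotomy claimed in (i) for $t=0,\dots,T$.

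Part (ii) is the heart of the argument. Writing $g^{(t)}:=g_{\outerM{t}}(\outerXinit{t})$ and using $\outerep{t}=\theta\norm{g^{(t)}}$ together with \eqref{sigma-th}, I would first record
\[
\sigma(\outerXplus{t},\outerep{t})=\frac{\theta}{1+\sqrt{2}\beta}\cdot\frac{\norm{g^{(t)}}}{\dist(\outerXplus{t},X^*)},
\]
which equals $+\infty\ (\ge\bar\sigma)$ when $\outerXplus{t}\in X^*$, so I may assume $\outerXplus{t}\notin X^*$ and invoke Lemma~\ref{lem:rAdaAPG-base}~(iv). For $\rho\ge 2$ I would bound $\dist(\outerXplus{t},X^*)$ from above via \eqref{rAdaAPG-dist-bound} (legitimate since $\rho-1>0$), substitute $\norm{g^{(t)}}=\outerep{t}/\theta$, and collect the powers of $\theta$ into $\theta^{1/(\rho-1)}$ to obtain the middle expression of \eqref{sigma-th-t}; the final bound $\ge\bar\sigma$ then follows because the loops $t=0,\dots,T$ precede termination, so $\norm{g^{(t)}}>\ep$ forces $(\outerep{t}/(\theta\ep))^{(\rho-2)/(\rho-1)}\ge 1$. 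For $1\le\rho<2$ the exponent of $\norm{g^{(t)}}$ turns negative, so I would instead feed in two bounds simultaneously: \eqref{rAdaAPG-dist-bound} gives $\norm{g^{(t)}}/\dist(\outerXplus{t},X^*)\ge\kappa(\frac{L_f}{L_{\min}}+1)^{-1}\dist(\outerXplus{t},X^*)^{\rho-2}$, while the H\"olderian bound combined with the monotonicity $\varphi(\outerXplus{t})-\varphi^*\le\varphi(\xinit)-\varphi^*=:\Delta_0$ from Lemma~\ref{lem:rAdaAPG-base}~(ii) gives $\dist(\outerXplus{t},X^*)\le(\Delta_0/\kappa)^{1/\rho}$; since $\rho-2<0$ this upper bound becomes a lower bound on $\dist(\outerXplus{t},X^*)^{\rho-2}$, and the powers of $\kappa$ fuse into $\kappa^{2/\rho}$, reproducing exactly the second branch of \eqref{bar-sigma-def}.

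For part (iii) I would induct on $t$, starting from $\outersigma{0}=\sigmainit$. If $\sigmainit\le\bar\sigma$, then assuming $\outersigma{t}=\sigmainit$ I have $\outersigma{t}=\sigmainit\le\bar\sigma\le\sigma(\outerXplus{t},\outerep{t})$ by (ii), so the first branch of (i) gives $\outersigma{t+1}=\outersigma{t}=\sigmainit$. If $\sigmainit>\bar\sigma$, then assuming $\outersigma{t}\ge\bar\sigma/\gamma_{\rm reg}$ I split on the dichotomy of (i): in the first branch $\outersigma{t+1}=\outersigma{t}\ge\bar\sigma/\gamma_{\rm reg}$ by the hypothesis, while in the second branch $\outersigma{t+1}\ge\sigma(\outerXplus{t},\outerep{t})/\gamma_{\rm reg}\ge\bar\sigma/\gamma_{\rm reg}$ by (ii); either way the invariant propagates through $t=T+1$.

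The main obstacle I anticipate is the $1\le\rho<2$ case of (ii): unlike $\rho\ge 2$, it cannot be closed by the distance error bound alone, and one must deploy the H\"olderian bound in the opposite direction (as an upper bound on $\dist(\outerXplus{t},X^*)$ through $\Delta_0$) and then verify that the powers of $\kappa$, $(\frac{L_f}{L_{\min}}+1)$, and $\Delta_0$ collapse precisely to the prescribed form of $\bar\sigma$. A secondary point demanding care is the bookkeeping of hypotheses: the non-termination bound $\norm{g^{(t)}}>\ep$ is needed only in the regime $\rho\ge 2$, whereas the degenerate case $\outerXplus{t}\in X^*$ (where $\sigma(\outerXplus{t},\outerep{t})=+\infty$) must be dispatched separately in every regime.
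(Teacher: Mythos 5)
Your proposal follows the paper's own proof almost step for step: part (i) from Theorem~\ref{thm:AdaAPG}~(i), part (ii) from Lemma~\ref{lem:rAdaAPG-base}~(iv) (with the degenerate case $\outerXplus{t}\in X^*$ dispatched via $\sigma(\outerXplus{t},\outerep{t})=+\infty$, exactly as the paper does), and part (iii) by the same induction gluing (i) and (ii). The one place where you genuinely deviate is the $\rho\in[1,2[$ branch of (ii): the paper factors $(\varphi(\outerXplus{t})-\varphi^*)^{1/\rho}$ as $(\varphi(\outerXplus{t})-\varphi^*)^{(\rho-1)/\rho}(\varphi(\outerXplus{t})-\varphi^*)^{(2-\rho)/\rho}$ and invokes the functional-residual bound \eqref{rAdaAPG-obj-bound} together with $\varphi(\outerXplus{t})\leq\varphi(\xinit)$, whereas you combine the distance bound \eqref{rAdaAPG-dist-bound} with the HEB consequence $\dist(\outerXplus{t},X^*)\leq(\Delta_0/\kappa)^{1/\rho}$ and exploit $\rho-2<0$ to turn the upper bound on the distance into a lower bound on $\dist(\outerXplus{t},X^*)^{\rho-2}$. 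The exponents do collapse correctly to $\kappa^{2/\rho}\left(\frac{L_f}{L_{\min}}+1\right)^{-1}\Delta_0^{-(2-\rho)/\rho}$, matching the second branch of \eqref{bar-sigma-def}, so your variant is valid and of essentially the same length; it buys nothing and costs nothing relative to the paper's route.

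There is, however, one small but real hole in your part (i). Theorem~\ref{thm:AdaAPG}~(i) states only that the subroutine terminates at outer loop $j$ \emph{whenever} $\sigma_j\leq\sigma(x_0,\ep)$; it does not say ``precisely when.'' The subroutine $\AdaAPG(\outerXplus{t},\outerL{t},\outersigma{t},\outerep{t})$ can perfectly well stop at $j=0$ even when $\outersigma{t}>\sigma(\outerXplus{t},\outerep{t})$, because the stopping test $\norm{g_{M_k}(x_{k+1})}\leq\outerep{t}$ may fire before the growth condition on $A_{k+1}$ is ever met. In that subcase your justification for the second branch --- citing the bound on ``reached values $\sigma_j$ with $j>0$'' --- is vacuous, since no $j>0$ is reached. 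The fix is one line, and it is exactly the line the paper includes: if the subroutine stops at $j=0$ while $\outersigma{t}>\sigma(\outerXplus{t},\outerep{t})$, then $\outersigma{t+1}=\outersigma{t}>\sigma(\outerXplus{t},\outerep{t})\geq\sigma(\outerXplus{t},\outerep{t})/\gamma_{\rm reg}$, so the claimed inequality holds trivially. With that subcase added, your proof is complete and correct.
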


\begin{proof}
(i)
In the case $\outersigma{t} \leq \sigma(\outerXplus{t},\ep^{(t)})$, Theorem~\ref{thm:AdaAPG} (i) implies that the subroutine $\AdaAPG$ at the $t$-th outer loop must terminate at the first loop $j=0$ so that $\outersigma{t+1}$ is defined by $\outersigma{t+1}=\outersigma{t}$.
On the other hand, consider the case $\outersigma{t} > \sigma(\outerXplus{t},\ep^{(t)})$.
When the subroutine $\AdaAPG$ at the $t$-th outer loop terminates at the first loop $j=0$, then it is clear that
$\outersigma{t+1}=\outersigma{t} > \sigma(\outerXplus{t},\ep^{(t)}) \geq \sigma(\outerXplus{t},\ep^{(t)})/\gamma_{\rm reg}.$
In the other case, the latter assertion of Theorem~\ref{thm:AdaAPG}~(i) implies $\outersigma{t+1}\geq \sigma(\outerXplus{t},\ep^{(t)})/\gamma_{\rm reg}.$

(ii)
We may assume $\outerXplus{t} \not\in X^*$, which allows us to apply Lemma~\ref{lem:rAdaAPG-base} (iv) (Note that the assertion is clear if $\outerXplus{t} \in X^*$ since then $\sigma(\outerXplus{t},\ep^{(t)})=+\infty$).
In the case $\rho\geq 2$, using \eqref{rAdaAPG-dist-bound} implies
$$
\sigma(\outerXplus{t},\ep^{(t)})
= \frac{\theta\norm{g_{\outerM{t}}(\outerXinit{t})}}{(1+\sqrt{2}\beta)\dist(\outerXplus{t},X^*)}
\geq
\frac{\theta}{1+\sqrt{2}\beta}\cdot
\kappa^{\frac{1}{\rho-1}}\left(\frac{L_f}{L_{\min}}+1\right)^{-\frac{1}{\rho-1}}
\norm{g_{\outerM{t}}(\outerXinit{t})}^{\frac{\rho-2}{\rho-1}}.
$$
Since $\ep^{(t)}\equiv \theta\|g_{\outerM{t}}(\outerXinit{t})\|$ and $\|g_{\outerM{t}}(\outerXinit{t})\| \geq \ep$ (by the definition of $T$), we obtain \eqref{sigma-th-t}.

If $\rho\in [1,2[$, on the other hand,
since $\varphi(\outerXplus{t})\leq \varphi(\xinit)$ by Lemma~\ref{lem:rAdaAPG-base} (ii), the inequalities \eqref{HEB-review} and \eqref{rAdaAPG-obj-bound} imply (remark that $\frac{2-\rho}{\rho}>0$)
\begin{align*}
\dist(\outerXplus{t},X^*) &\leq \frac{1}{\kappa^{\frac{1}{\rho}}}(\varphi(\outerXplus{t})-\varphi^*)^{\frac{1}{\rho}}
=
\frac{1}{\kappa^{\frac{1}{\rho}}}(\varphi(\outerXplus{t})-\varphi^*)^{\frac{\rho-1}{\rho}}(\varphi(\outerXplus{t})-\varphi^*)^{\frac{2-\rho}{\rho}}\\
&\leq
\frac{1}{\kappa^{\frac{2}{\rho}}}\left(\frac{L_f}{L_{\min}}+1\right)\norm{g_{\outerM{t}}(\outerXinit{t})}(\varphi(\outerXplus{t})-\varphi^*)^{\frac{2-\rho}{\rho}}\\
&\leq
\frac{1}{\kappa^{\frac{2}{\rho}}}\left(\frac{L_f}{L_{\min}}+1\right)\norm{g_{\outerM{t}}(\outerXinit{t})}(\varphi(\xinit)-\varphi^*)^{\frac{2-\rho}{\rho}}.
\end{align*}
Therefore, we conclude that
$$
\sigma(\outerXplus{t},\ep^{(t)})
= \frac{\theta\norm{g_{\outerM{t}}(\outerXinit{t})}}{(1+\sqrt{2}\beta)\dist(\outerXplus{t},X^*)}
\geq
\frac{\theta}{1+\sqrt{2}\beta}
\kappa^{\frac{2}{\rho}}\left(\frac{L_f}{L_{\min}}+1\right)^{-1}(\varphi(\xinit)-\varphi^*)^{-\frac{2-\rho}{\rho}}
=\bar\sigma.
$$
This proves the assertion (ii).

(iii)
In the case $\sigmainit\leq \bar\sigma$, (ii) implies $\outersigma{0} \leq \sigma(\outerXplus{0},\ep^{(0)})$. Then, using (i), we have $\outersigma{1}=\outersigma{0}$ and also $\outersigma{1} \leq \bar\sigma$.
Continuing this argument inductively, we conclude that $\outersigma{t}=\outersigma{t-1}=\cdots=\outersigma{0}$ for all $t$.

In the case $\sigmainit \geq \bar\sigma$, on the other hand, let us show $\outersigma{t} \geq \bar\sigma/\gamma_{\rm reg}~(t=0,\ldots,T+1)$ by induction. The assertion for $t=0$ is clear since $\sigmainit \geq \bar\sigma \geq \bar\sigma/\gamma_{\rm reg}$.
Assume that $\outersigma{t}\geq \bar\sigma/\gamma_{\rm reg}$ holds for some $t\geq 0$. By (i) and (ii), we have
$
\outersigma{t+1} \geq \min\{\outersigma{t}, \sigma(\outerXplus{t},\ep^{(t)})/\gamma_{\rm reg}\} \geq \min\{\bar\sigma/\gamma_{\rm reg}, \bar\sigma/\gamma_{\rm reg}\} =\bar\sigma/\gamma_{\rm reg}.
$
This completes the proof of (iii).
\end{proof}

In order to provide more accurate complexity analysis, we prove bounds of $\outersigma{t}$ specialized to the case $\rho > 2$.

\begin{lemma}\label{lem:rAdaAPG-sigma-2}
Assume that the H\"olderian error bound condition \eqref{HEB-review} holds with $\rho >2$.
Suppose that $\rAdaAPG$ terminated with the stopping criterion at the $(T+1)$-th outer loop for some $T\geq 0$.
If $\sigmainit \geq \bar\sigma$ holds for $\bar\sigma$  defined by \eqref{bar-sigma-def},
then there exists $t_0 \in \{0,\ldots,T+1\}$ such that the following conditions hold, where
$
\sigma_*^{(0)}:=\frac{\theta}{1+\sqrt{2}\beta}\cdot
\kappa^{\frac{1}{\rho-1}}\left(\frac{L_f}{L_{\min}}+1\right)^{-\frac{1}{\rho-1}}
\|g_{\outerM{0}}(\xinit)\|^{\frac{\rho-2}{\rho-1}}.
$
\begin{align}
\text{(i)\qquad} & \outersigma{0}=\cdots=\outersigma{t_0}. \label{sigma-bound-rho>2-1}\\
\text{(ii)\qquad} & \outersigma{t+1} \geq \theta^{-\frac{\rho-2}{\rho-1}(T-t)} \bar\sigma/\gamma_{\rm reg},\quad t=t_0,\ldots,T.\label{sigma-bound-rho>2-2}\\
\text{(iii)\qquad} & t_0 \leq 1+\frac{\rho-1}{\rho-2}\log_{1/\theta}\frac{\sigma_*^{(0)}}{\min(\sigmainit,\sigma_*^{(0)})}.\label{sigma-bound-rho>2-3}\\
\text{(iv)\qquad} & \theta^{\frac{\rho-2}{\rho-1}(T-t_0)} \geq {\bar\sigma}/{\outersigma{0}}.\label{sigma-bound-rho>2-4}
\end{align}

\end{lemma}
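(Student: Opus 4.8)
The plan is to control the regularization sequence $\outersigma{t}$ through a \emph{geometric lower bound} on the acceptance thresholds $\tau_t:=\sigma(\outerXplus{t},\ep^{(t)})$, and then to pin $t_0$ to that bound rather than to the realized trajectory of $\outersigma{t}$. Write $g_t:=\norm{g_{\outerM{t}}(\outerXinit{t})}$ and set $K:=\frac{1}{1+\sqrt{2}\beta}\kappa^{\frac{1}{\rho-1}}\left(\frac{L_f}{L_{\min}}+1\right)^{-\frac{1}{\rho-1}}$, so that by \eqref{bar-sigma-def} we have $\bar\sigma=\theta K\ep^{\frac{\rho-2}{\rho-1}}$ and $\sigma_*^{(0)}=\theta K g_0^{\frac{\rho-2}{\rho-1}}$. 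Three facts come for free: the residue-reduction rule of $\rAdaAPG$ gives $g_{t+1}\le\ep^{(t)}=\theta g_t$ (Lemma~\ref{lem:rAdaAPG-base}(iii)); termination at the $(T+1)$-th loop gives $g_t>\ep$ for $t=0,\dots,T$; and, exactly as in the displayed estimate inside the proof of Lemma~\ref{lem:rAdaAPG-sigma}(ii), whenever $\outerXplus{t}\notin X^*$ the bound \eqref{rAdaAPG-dist-bound} yields $\tau_t\ge\theta K g_t^{\frac{\rho-2}{\rho-1}}$ (and $\tau_t=+\infty$ otherwise).

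First I would prove the key estimate
\begin{equation*}
\tau_t \;\ge\; \theta^{-\frac{\rho-2}{\rho-1}(T-t)}\,\bar\sigma, \qquad t=0,\dots,T. \tag{$\star$}
\end{equation*}
Iterating $g_{s+1}\le\theta g_s$ downward gives $g_t\ge\theta^{\,t-T}g_T>\theta^{\,t-T}\ep$; substituting this into $\tau_t\ge\theta K g_t^{\frac{\rho-2}{\rho-1}}$ and using $\frac{\rho-2}{\rho-1}>0$ together with $\bar\sigma=\theta K\ep^{\frac{\rho-2}{\rho-1}}$ produces $(\star)$ (trivially so when $\outerXplus{t}\in X^*$). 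This is the only place where $\rho>2$ and the stopping information $g_T>\ep$ enter, and it is the technical heart of the argument.

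Next I would fix the index
\begin{equation*}
t_0:=\max\{0,\lceil t_1\rceil\},\qquad t_1:=T-\tfrac{\rho-1}{\rho-2}\log_{1/\theta}\tfrac{\sigmainit}{\bar\sigma},
\end{equation*}
which lies in $\{0,\dots,T+1\}$ since $\sigmainit\ge\bar\sigma$ forces $t_1\le T$. Conditions (i), (iii), (iv) then follow from elementary manipulations of $(\star)$. For every integer $t\le t_1$, $(\star)$ gives $\tau_t\ge\sigmainit$, so the equality case of Lemma~\ref{lem:rAdaAPG-sigma}(i) propagates $\outersigma{t}=\sigmainit$ by induction up to $t=\lfloor t_1\rfloor+1\ge t_0$, which is (i). Rearranging $t_0\ge t_1$ gives $\theta^{\frac{\rho-2}{\rho-1}(T-t_0)}\ge\bar\sigma/\sigmainit$, which is (iv). Finally, (iii) follows from $t_0\le t_1+1$ combined with the crude bound $T<\log_{1/\theta}(g_0/\ep)=\frac{\rho-1}{\rho-2}\log_{1/\theta}\frac{\sigma_*^{(0)}}{\bar\sigma}$ (from $g_T\le\theta^Tg_0$ and $g_T>\ep$) and the identity $\log_{1/\theta}\frac{\sigmainit}{\bar\sigma}+\log_{1/\theta}\frac{\sigma_*^{(0)}}{\min(\sigmainit,\sigma_*^{(0)})}\ge\log_{1/\theta}\frac{\sigma_*^{(0)}}{\bar\sigma}$, which holds in both cases $\sigmainit\gtrless\sigma_*^{(0)}$.

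Finally I would establish (ii) by induction on $t=t_0,\dots,T$ built on $\outersigma{t+1}\ge\min\{\outersigma{t},\tau_t/\gamma_{\rm reg}\}$ from Lemma~\ref{lem:rAdaAPG-sigma}(i): the base case $t=t_0$ uses $\outersigma{t_0}=\sigmainit\ge\theta^{-\frac{\rho-2}{\rho-1}(T-t_0)}\bar\sigma/\gamma_{\rm reg}$ (this is (iv) with a spare factor $\gamma_{\rm reg}$) together with $\tau_{t_0}/\gamma_{\rm reg}\ge\theta^{-\frac{\rho-2}{\rho-1}(T-t_0)}\bar\sigma/\gamma_{\rm reg}$ from $(\star)$, while the inductive step uses $\theta^{-\frac{\rho-2}{\rho-1}(T-t+1)}\ge\theta^{-\frac{\rho-2}{\rho-1}(T-t)}$ so that the bound inherited for $\outersigma{t}$ dominates the new target inside the minimum. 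The main obstacle is conceptual rather than computational: because $\AdaAPG$ may already terminate at its first guess $j=0$ (leaving $\outersigma{t+1}=\outersigma{t}$) even when $\outersigma{t}$ exceeds $\tau_t$, the naive choice of $t_0$ as the first index at which $\outersigma{t}$ \emph{actually} drops cannot be bounded as in (iii); anchoring $t_0$ to the guaranteed lower bound $(\star)$ instead of to the realized dynamics of $\outersigma{t}$ is precisely what makes (i)--(iv) simultaneously attainable.
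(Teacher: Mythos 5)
Your proof is correct, and it shares the paper's skeleton: both arguments rest on the dichotomy of Lemma~\ref{lem:rAdaAPG-sigma}~(i) (the parameter is unchanged when $\outersigma{t}\leq\sigma(\outerXplus{t},\ep^{(t)})$, and loses at most a factor $\gamma_{\rm reg}$ otherwise) together with the threshold lower bound coming from \eqref{rAdaAPG-dist-bound}, and both prove (i) by the equality-case induction and (ii) by the $\min$-induction. The genuine difference is how $t_0$ is anchored. The paper works with the data-dependent threshold sequence $\sigma_*^{(t)}$, which in your notation equals $\theta K g_t^{\frac{\rho-2}{\rho-1}}$, and defines $t_0$ as the \emph{first-crossing} index, i.e.\ the smallest $t$ with $\sigmainit\geq\sigma_*^{(t)}$; then (iii) and (iv) follow directly from the minimality of $t_0$ and the monotonicity $\sigma_*^{(t+1)}\leq\theta^{\frac{\rho-2}{\rho-1}}\sigma_*^{(t)}$, with no need to bound $T$. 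You instead replace $\sigma_*^{(t)}$ by its worst-case geometric minorant $\theta^{-\frac{\rho-2}{\rho-1}(T-t)}\bar\sigma$ (your $(\star)$, obtained from $g_t\geq\theta^{t-T}g_T>\theta^{t-T}\ep$) and take $t_0$ to be an explicit formula in $T$, $\sigmainit$, $\bar\sigma$; this makes (iv) a one-line rearrangement but forces you to import the extra estimate $T<\log_{1/\theta}\bigl(\norm{g_{\outerM{0}}(\xinit)}/\ep\bigr)$ to recover (iii). The two routes are interchangeable here: the paper's crossing index is slightly sharper (it yields $\outersigma{t+1}\geq\sigma_*^{(t)}/\gamma_{\rm reg}$ pointwise before relaxing, which is what Theorem~\ref{thm:rAdaAPG}~(ii) actually consumes), while yours has the virtue that $t_0$ is explicitly computable and the geometric decay is made visible at once.

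Two cosmetic points. First, your step ``$t_0\leq t_1+1$'' fails when $t_1<-1$ (then $t_0=0>t_1+1$); this costs nothing, since in that case the right-hand side of (iii) is at least $1\geq t_0$, but the case split should be stated. Second, your closing remark criticizes taking $t_0$ to be ``the first index at which $\outersigma{t}$ actually drops''; note that the paper does not do this either---its $t_0$ is a crossing index for the guaranteed threshold sequence $\sigma_*^{(t)}$, not for the realized sequence $\outersigma{t}$---so both proofs avoid the pitfall you describe, just with different guaranteed sequences.
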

\begin{proof}
Define
$$
\sigma_*^{(t)}:=\frac{\theta^{\frac{1}{\rho-1}}}{1+\sqrt{2}\beta}\cdot
\kappa^{\frac{1}{\rho-1}}\left(\frac{L_f}{L_{\min}}+1\right)^{-\frac{1}{\rho-1}}
(\ep^{(t)})^{\frac{\rho-2}{\rho-1}},\quad t \geq 0.
$$
Then, Lemma~\ref{lem:rAdaAPG-sigma} (ii) can be written as
\begin{equation}\label{simga_*^t-bound}
\sigma(\outerXplus{t},\ep^{(t)}) \geq \sigma_*^{(t)} \geq \bar\sigma,\quad t=0,\ldots,T.
\end{equation}
For simplicity, denote
$$
\omega:=\frac{\rho-2}{\rho-1} \in ]0,1[,\quad
c:=\frac{\theta^{\frac{1}{\rho-1}}}{1+\sqrt{2}\beta}\cdot
\kappa^{\frac{1}{\rho-1}}\left(\frac{L_f}{L_{\min}}+1\right)^{-\frac{1}{\rho-1}},
$$
so that we have
$$
\bar\sigma = c(\theta\ep)^{\omega},\quad
\sigma_*^{(t)}=c(\ep^{(t)})^{\omega},\quad t\geq 0.
$$
Note that $\{\sigma_*^{(t)}\}$ is non-increasing;
in fact, the relation $\ep^{(t+1)}\leq \theta\ep^{(t)}$ implies
\begin{equation}\label{sigma-*-mono}
\sigma_*^{(t+1)}=c(\ep^{(t+1)})^\omega\leq c\theta^\omega(\ep^{(t)})^\omega = \theta^\omega \sigma_*^{(t)} \leq \sigma_*^{(t)}.
\end{equation}

Let $t_0$ be the smallest integer in $\{0,\ldots,T+1\}$ such that $\sigmainit \geq \sigma_*^{(t_0)}$.
Remark that, by the definition of $T$, we have $\ep^{(T+1)}=\theta\|g_{\outerM{T+1}}(\outerXinit{T+1})\| \leq \theta\ep$.
This implies that
$
\sigmainit \geq \bar\sigma= c(\theta\ep)^\omega \geq c(\ep^{(T+1)})^\omega = \sigma_*^{(T+1)}.
$
Therefore, $t_0$ is well-defined.

(i)
By the definition of $t_0$ and \eqref{simga_*^t-bound}, we have
$\sigmainit < \sigma_*^{(t)} \leq \sigma(\outerXplus{t},\ep^{(t)})$ ($0\leq \forall t < t_0$).
Therefore, by induction, we obtain $\outersigma{0}=\outersigma{1}=\cdots=\outersigma{t_0}$ due to Lemma~\ref{lem:rAdaAPG-sigma}~(i).

(ii)
Let us show $\outersigma{t+1}\geq \sigma_*^{(t)}/\gamma_{\rm reg}~(t_0\leq t\leq T)$.
To prove this, we verify $\outersigma{t}\geq \sigma_*^{(t)}/\gamma_{\rm reg}$ for ${t= t_0,\ldots,T+1}$ by induction. Note that $\outersigma{t_0}=\outersigma{0} \geq  \sigma_*^{(t_0)} > \sigma_*^{(t_0)}/\gamma_{\rm reg}$ holds by (i) and the definition of $t_0$. Now under the hypothesis $\outersigma{t} \geq  \sigma_*^{(t)}/\gamma_{\rm reg}$ for $t$ with $t_0\leq t \leq T$, Lemma~\ref{lem:rAdaAPG-sigma}~(i) and \eqref{simga_*^t-bound} imply
$$\outersigma{t+1} \geq \min\{\outersigma{t},\sigma(\outerXplus{t},\ep^{(t)})/\gamma_{\rm reg}\} \geq \min\{\sigma_*^{(t)}/\gamma_{\rm reg},\sigma_*^{(t)}/\gamma_{\rm reg}\} = \sigma_*^{(t)}/\gamma_{\rm reg} \geq \sigma_*^{(t+1)}/\gamma_{\rm reg}.$$
Therefore, this completes the induction; in addition, the above inequality involves the desired inequality $\outersigma{t+1}\geq \sigma_*^{(t)}/\gamma_{\rm reg}~(t_0\leq t\leq T)$.
This yields (ii) combined with \eqref{simga_*^t-bound} and \eqref{sigma-*-mono}:
$$
\outersigma{t+1} \geq \sigma_*^{(t)}/\gamma_{\rm reg} \geq \theta^{-\omega(T-t)}\sigma_*^{(T)}/\gamma_{\rm reg} \geq \theta^{-\omega(T-t)} \bar\sigma/\gamma_{\rm reg},\quad t=t_0,\ldots,T.
$$

(iii)
If $t_0=0$, then
(iii) is trivial since
$\sigmainit \geq\sigma_*^{(t_0)}=\sigma_*^{(0)}$. If $t_0 > 0$, then the definition of $t_0$ and using \eqref{sigma-*-mono} imply
$\sigmainit < \sigma_*^{(t_0-1)} \leq \theta^{\omega(t_0-1)}\sigma_*^{(0)},$
which yields $t_0 \leq 1+\frac{1}{\omega}\log_{1/\theta}\frac{\sigma_*^{(0)}}{\sigmainit}$.

(iv)
By \eqref{simga_*^t-bound}, \eqref{sigma-*-mono}, and the definition of $t_0$, remark that
$
\bar\sigma \leq \sigma_*^{(T)} \leq \theta^{\omega(T-t_0)}\sigma_*^{(t_0)}\leq \theta^{\omega(T-t_0)}\sigmainit.
$
Hence, 
$
\theta^{\omega(T-t_0)} \geq {\bar\sigma}/{\outersigma{0}}
$
holds.
\end{proof}

Finally, we present the proofs of Theorem~\ref{thm:rAdaAPG} and Corollary~\ref{cor:rAdaAPG}.

\begin{proof}[\underline{Proof of Theorem~\ref{thm:rAdaAPG}}]
We may assume that $\norm{g_{\outerM{0}}(\xinit)} > \ep$ since $N=0$ on the other case.
Suppose that $\rAdaAPG$ terminated with the stopping criterion at the $(T+1)$-th outer loop for some $T\geq 0$.
Denote by $N^{(t)}$ the number of executions of $\APGIter$ at the $t$-th outer loop so that $N=\sum_{t=0}^T N^{(t)}$.
Let $\bar\sigma$ be defined by \eqref{bar-sigma-def}.

By the definition of $T$, we have
\begin{equation}\label{pr:gradmap-lb-ep}
\norm{g_{\outerM{t}}(\outerXinit{t})} > \ep,\quad t=0,\ldots,T.
\end{equation}
Moreover, by the definition of $\sigma_*$ in \eqref{simga*-def},
using Lemma~\ref{lem:rAdaAPG-sigma} (iii) implies
$$
\outersigma{t} \geq
\sigma_*,\quad t=0,\ldots,T+1.
$$
Therefore, applying Lemma~\ref{lem:rAdaAPG-compl} with $T_*=T$, we obtain the assertion
\begin{equation}\label{N-ub-basic-HEB}
N \leq N(\ep,\sigma_*,C) \quad \text{with}\quad
C=\sqrt{\frac{1}{\sigma_*}}\left(1+\log_{1/\theta}\frac{\norm{g_{\outerM{0}}(\xinit)}}{\ep}\right).
\end{equation}
For the case $\rho=2$, \eqref{N-ub-basic-HEB} proves the assertion (i).
We discuss the other cases to improve \eqref{N-ub-basic-HEB}.

\fbox{(ii) Case $\rho>2$.}
If $\sigmainit < \bar\sigma$, then $\sigma_*$ is defined as $\sigma_*=\sigmainit$. 
Therefore, the latter bound of (ii) is obtained by \eqref{N-ub-basic-HEB}.

Now consider the case $\sigmainit \geq \bar\sigma$.
Then, there exists $t_0 \in \{0,\ldots,T+1\}$ satisfying the conditions in Lemma~\ref{lem:rAdaAPG-sigma-2}.
By \eqref{sigma-bound-rho>2-4}, remark that
\begin{equation}\label{rAdaAPG-compl-p>2-pf-sum}
\sum_{t=t_0}^{T}\sqrt{\theta^{\frac{\rho-2}{\rho-1}(T-t)}}
=
\sum_{i=0}^{T-t_0}\sqrt{\theta^{\frac{\rho-2}{\rho-1}}}^{\,i}
=
\frac{1-\sqrt{\theta^{\frac{\rho-2}{\rho-1}}}^{T-t_0+1}}{1-\sqrt{\theta^{\frac{\rho-2}{\rho-1}}}}
\leq
\frac{1-\sqrt{\theta^{\frac{\rho-2}{\rho-1}}\cdot\dfrac{\bar\sigma}{\outersigma{0}}}}{1-\sqrt{\theta^{\frac{\rho-2}{\rho-1}}}}.
\end{equation}
Therefore, we conclude that
\begin{align*}
\sum_{t=0}^{T}\sqrt{\frac{1}{\outersigma{t+1}}}
&=
\sum_{t=0}^{t_0-1} \sqrt{\frac{1}{\outersigma{t+1}}}+
\sum_{t=t_0}^{T}\sqrt{\frac{1}{\outersigma{t+1}}}
\\
&\leq
t_0 \sqrt{\frac{1}{\outersigma{0}}} +
\sqrt{\frac{\gamma_{\rm reg}}{\bar\sigma}}\sum_{t=t_0}^{T}\sqrt{\theta^{{\frac{\rho-2}{\rho-1}}(T-t)}} & (\text{by } \eqref{sigma-bound-rho>2-1} \text{ and } \eqref{sigma-bound-rho>2-2})\\
&\leq
\sqrt{\frac{1}{\outersigma{0}}}\left(1+\frac{\rho-1}{\rho-2}\log_{1/\theta}\frac{\sigma_*^{(0)}}{\min(\sigma_*^{(0)},\outersigma{0})}\right)\\
&\quad~+
\frac{\sqrt{\gamma_{\rm reg}}}{1-\sqrt{\theta^{\frac{\rho-2}{\rho-1}}}}
\left(
\sqrt{\frac{1}{\bar\sigma}}
-
\sqrt{\theta^{\frac{\rho-2}{\rho-1}
}}
\sqrt{\frac{1}{\outersigma{0}}}
\right) & (\text{by } \eqref{sigma-bound-rho>2-3} \text{ and } \eqref{rAdaAPG-compl-p>2-pf-sum})\\
&=:C.
\end{align*}
With this definition of $C$, Lemma~\ref{lem:rAdaAPG-compl} gives $N \leq N(\ep,\sigma_*,C)$.

\fbox{(iii) Case $\rho \in ]1,2[$.}
If $\ep \geq \ep_*$, then \eqref{N-ub-basic-HEB} gives our assertion because the second term of \eqref{N-bd-superlinear} vanishes.
Suppose, on the other hand, that $\ep < \ep_*$. 
Denote
$\xi_{t}:=\norm{g_{\outerM{t}}(\outerXinit{t})}$
and 
let $T_*\geq 0$ be the smallest integer such that $\xi_{T_*} \leq \ep_*$.
Then, since $\xi_{T_*-1} > \ep_*$ holds, Lemma~\ref{lem:rAdaAPG-compl} shows that
$$
\sum_{t=0}^{T_*-1} N^{(t)} \leq N(\ep_*,\sigma_*,C)\quad \text{with}\quad C=\sqrt{\frac{1}{\sigma_*}}\left( 1 + \log_{1/\theta}\frac{\norm{g_{\outerM{0}}(\xinit)}}{\ep_*} \right).
$$
Note that, under the convention $\sum_{t=0}^{-1}(\cdot)=0$, this inequality also holds if $T_*=0$ since then $\xi_0\leq \ep_*$ and $N(\ep_*,\sigma_*,C)=0$.

It remains to observe $\sum_{t=T_*}^T N^{(t)}$.
Take $t\in \{T_*,\ldots,T\}$. We shall prove $N^{(t)}=1$.
In the $t$-th outer loop, consider the first iteration of the subroutine $\AdaAPG(\outerXplus{t},\outerL{t},\outersigma{t},\ep^{(t)})$, which executes
$$
\{x_1,\psi_1,M_0,L_1,A_1\} \leftarrow \APGIter_{\outersigma{t}}(\outerXplus{t},\psi_0,\outerL{t},A_0),
$$
where $\psi_0(x)=\frac{1}{2}\|x-\outerXplus{t}\|^2$ and $A_0=0$.
Then, Proposition~\ref{prop:APG} (v) implies
$$
\norm{g_{M_{0}}(x_1)} \leq \left(2\sqrt{\frac{M_{0}+\outersigma{t}}{A_1}} + \outersigma{t}\right)\dist(\outerXplus{t},X^*).
$$
Moreover, according to the equation at Line~\ref{a-quad-eq} in Algorithm~\ref{APG-iter}, $A_1$ can be calculated as $A_1=2/M_0$. Now remark that, using $M_0 \leq \gamma_{\rm inc}L_f$ (Proposition~\ref{prop:APG}~(i)), we have
\begin{align*}
2\sqrt{\frac{M_{0}+\outersigma{t}}{A_1}} + \outersigma{t} &= 2\sqrt{\frac{M_0(M_0+\outersigma{t})}{2}} + \outersigma{t}
\leq 2\sqrt{\frac{(M_0+\outersigma{t})^2}{2}} + (M_0+\outersigma{t}) \\&= (1+\sqrt{2})(M_0+\outersigma{t}) \leq (1+\sqrt{2})(\gamma_{\rm inc}L_f + \outersigma{t}).
\end{align*}
Combining them and using \eqref{rAdaAPG-dist-bound}\footnote{Although \eqref{rAdaAPG-dist-bound} is asserted in the case $\outerXplus{t} \not\in X^*$, it trivially holds if $\outerXplus{t} \in X^*$ \emph{unless} $\rho = 1$.}, we conclude that
\begin{align}
\norm{g_{M_{0}}(x_1)}
&\leq
(1+\sqrt{2})(\gamma_{\rm inc}L_f+\outersigma{0})\dist(\outerXplus{t},X^*) \nonumber\\
&\leq
(1+\sqrt{2})(\gamma_{\rm inc}L_f+\outersigma{0})
\left(\frac{1}{\kappa}\right)^{\frac{1}{\rho-1}} \left(\frac{L_f}{L_{\min}}+1\right)^{\frac{1}{\rho-1}}\xi_t^{\frac{1}{\rho-1}}
\nonumber\\
&= \theta\ep_*^{\frac{\rho-2}{\rho-1}}\xi_t^{\frac{1}{\rho-1}}
\label{superlinear-rec}\\
&\leq \theta\xi_t^{\frac{\rho-2}{\rho-1}}\xi_t^{\frac{1}{\rho-1}}=\theta\xi_t = \ep^{(t)},
\nonumber
\end{align}
where the last inequality is due to $\xi_t \leq \ep_*$ for $t \geq T_*$ (and remark $\frac{\rho-2}{\rho-1} < 0$).
This shows $N^{(t)}=1$ and \eqref{superlinear-rec} yields the recurrence
$
\xi_{t+1} \leq \theta\ep_*^{\frac{\rho-2}{\rho-1}}\xi_t^{\frac{1}{\rho-1}}
$,
for each $t=T_*,\ldots,T$.
Since $\frac{1}{\rho-1} > 1$, it reduces $\xi_t$ superlinearly. In particular, solving this recurrence implies
$$
\log \frac{\ep_*}{\theta^{\frac{\rho-1}{2-\rho}}\xi_T} \geq \left(\frac{1}{\rho-1}\right)^{T-T_*}\log \frac{\ep_*}{\theta^{\frac{\rho-1}{2-\rho}}\xi_{T_*}}.
$$
Since $\xi_T > \ep$ and $\xi_{T_*}\leq \ep_*$, we obtain
$$
\sum_{t=T_*}^T N^{(t)} =
T-T_*+1 \leq  1+ \left(\log\frac{1}{\rho-1}\right)^{-1}\left(\log\log \frac{\ep_*}{\theta^{\frac{\rho-1}{2-\rho}}\ep}-\log\log\frac{1}{\theta^{\frac{\rho-1}{2-\rho}}}\right).
$$
Consequently, $N$ is bounded as follows.
$$
N = \sum_{t=0}^{T_*-1} N^{(t)} + \sum_{t=T_*}^T N^{(t)} \leq N(\ep_*,\sigma_*,C) + 1+\left(\log\frac{1}{\rho-1}\right)^{-1}\left(\log\log \frac{\ep_*}{\theta^{\frac{\rho-1}{2-\rho}}\ep}-\log\log\frac{1}{\theta^{\frac{\rho-1}{2-\rho}}}\right).
$$

\fbox{(iv) Case $\rho=1$.}
We have
$
\outersigma{t} \geq
\sigma_*
$ for each $t=0,\ldots,T+1$, by Lemma~\ref{lem:rAdaAPG-sigma} (iii).
Moreover, Lemma~\ref{lem:rAdaAPG-base}~(vi) shows that, if $\outerXplus{t} \not\in X^*$, then we have
$$
1 \leq \frac{1}{\kappa}\left(\frac{L_f}{L_{\min}} + 1\right)\norm{g_{\outerM{t}}(\outerXinit{t})},
~~\text{i.e.,}~~
\norm{g_{\outerM{t}}(\outerXinit{t})} \geq \ep_*:=\kappa\left(\frac{L_f}{L_{\min}} + 1\right)^{-1}.
$$
In other words, the condition $\norm{g_{\outerM{t}}(\outerXinit{t})} < \ep_*$ must imply $\outerXplus{t}\in X^*$, which also yields $N^{(t)}=1$ and $\outerXinit{t+1}=\outerXplus{t} \in X^*$ by Proposition~\ref{prop:APG}~(v) (then the algorithm terminates at the ($t+1$)-th outer loop).
Therefore, we have
\begin{equation}\label{rho=1:gmap-ep*}
\norm{g_{\outerM{t}}(\outerXinit{t})} \geq \ep_*,\quad 0\leq t \leq T-1.
\end{equation}
\sloppy
Now we consider two cases. If $\norm{g_{\outerM{T}}(\outerXinit{T})} \geq \ep_*$ holds, then
combining with \eqref{pr:gradmap-lb-ep} yields ${\norm{g_{\outerM{T}}(\outerXinit{T})} \geq \max(\ep,\ep_*)}$, from which Lemma~\ref{lem:rAdaAPG-compl} with $T_*=T$ concludes
$$
N \leq N(\max(\ep,\ep_*),\sigma_*,C)
\quad\text{with}\quad
C=
\sqrt{\frac{1}{\sigma_*}}\left(1+\log_{1/\theta}\frac{\norm{g_{\outerM{0}}(\xinit)}}{\max(\ep,\ep_*)}\right).
$$
On the other case $\norm{g_{\outerM{T}}(\outerXinit{T})} < \ep_*$, we have $N^{(T)}=1$. Since \eqref{pr:gradmap-lb-ep} and \eqref{rho=1:gmap-ep*} implies ${\norm{g_{\outerM{T-1}}(\outerXinit{T-1})} \geq \max(\ep,\ep_*)}$, Lemma~\ref{lem:rAdaAPG-compl} with $T_*=T-1$ shows $\sum_{t=0}^{T-1}N^{(t)}\leq N(\max(\ep,\ep_*),\sigma_*,C)$. Hence,
$$
N = N^{(T)}+\sum_{t=0}^{T-1}N^{(t)} \leq 1 + N(\max(\ep,\ep_*),\sigma_*,C).
$$
This proves the desired bound on $N$.
To show the latter assertion of (iv), suppose $\ep<\ep_*$. Then, the output $\{\outerXinit{T+1},\outerM{T+1},\outerXplus{T+1}\}$ satisfies $\norm{g_{\outerM{T+1}}(\outerXinit{T+1})} \leq \ep < \ep_*$. Therefore, $\outerXplus{T+1}$ must be optimal.

The proof of Theorem~\ref{thm:rAdaAPG} is completed.
\end{proof}

\begin{proof}[\underline{Proof of Corollary~\ref{cor:rAdaAPG}}]
The function $N(\cdot,\cdot,\cdot)$ defined in \eqref{N-def} has the following expression.
$$
N(\ep,\sigma_*,C)
=
O\left( \log\frac{L_f+\sigma_*}{\sigma_*} \left[ \log\frac{\norm{g_0}}{\ep} + \log\frac{\sigmainit}{\sigma_*} + \sqrt{\frac{L_f}{\sigma_*}} + C\sqrt{L_f} \right] \right).
$$
By the choice \eqref{sigma0-choice-HEB} of $\sigmainit$, we can apply Corollary~\ref{cor:AdaAPG} and then the bound \eqref{simga0-bound} becomes
\begin{equation}\label{rAdaAPG-simgainit-bound}
\sigma(\outerXplus{0},\ep^{(0)}) \leq \sigmainit \leq \frac{2\gamma_{\rm inc}L_f}{1+\sqrt{2}\beta}.
\end{equation}
Then,  we have $\sigmainit \geq \bar\sigma$ since $\sigma(\outerXplus{0},\ep^{(0)}) \geq \bar\sigma$ holds by Lemma~\ref{lem:rAdaAPG-sigma} (ii).
Therefore, $\sigma_*$ in \eqref{simga*-def} becomes ${\sigma_*=\Theta(\bar\sigma)}$,
which also implies $\sigma_* = O(\sigmainit) = O(L_f)$ combined with \eqref{rAdaAPG-simgainit-bound}.
Applying the bounds ${\sigmainit=O(L_f)}$, $\sigma_*=O(L_f)$, and $\sigma_*=\Omega(\bar\sigma)$, we obtain
\begin{equation} \label{cor:rAdaAPG:N0}
N(\ep,\sigma_*,C)
=
O\left( \log\frac{L_f}{\bar\sigma} \left[ \log\frac{\norm{g_0}}{\ep} + \sqrt{\frac{L_f}{\bar\sigma}} + C\sqrt{L_f} \right] \right).
\end{equation}

If $\rho=2$, Theorem~\ref{thm:rAdaAPG} (i) implies $N \leq N(\ep,\sigma_*,C)$ with $C=O\left(\sqrt{\frac{1}{\bar\sigma}}\log\frac{\norm{g_0}}{\ep}\right)$.
In particular, \eqref{cor:rAdaAPG:N0} yields
\begin{equation}\label{cor:rAdaAPG:N1}
N(\ep,\sigma_*,C) = O\left(\sqrt{\frac{L_f}{\bar\sigma}}\log\frac{L_f}{\bar\sigma}\log\frac{\norm{g_0}}{\ep}\right).
\end{equation}
Since $\bar\sigma=\Theta(\kappa)$ by \eqref{bar-sigma-def}, we conclude the bound \eqref{rAdaAPG-compl-O} in the case $\rho=2$.

In the case $\rho=1$, using  Theorem~\ref{thm:rAdaAPG} (iv), the same argument as the case $\rho=2$ can be applied to obtain \eqref{cor:rAdaAPG:N1} replacing $\ep$ by $\max(\ep,\ep_*)$, where $\ep_*=\kappa(L_f/L_{\min}+1)^{-1}=\Omega(\kappa)$. Since $\bar\sigma=\Omega(\kappa^2/\Delta_0)$ by \eqref{bar-sigma-def}, we obtain the bound \eqref{rAdaAPG-compl-O} in the case $\rho=1$.

In the case $\rho \in ]1,2[$, we apply Theorem~\ref{thm:rAdaAPG} (iii) and the argument is similar to the previous cases. Therefore, the bound \eqref{rAdaAPG-compl-O} in this case can be obtained based on the estimate \eqref{cor:rAdaAPG:N1} replacing $\ep$ by $\max(\ep,\ep_*)$ and applying $\bar\sigma = \Omega(\kappa^{\frac{2}{\rho}}\Delta_0^{-\frac{2-\rho}{\rho}})$, $\ep_* = \Theta(\kappa^{\frac{1}{2-\rho}}L_f^{-\frac{\rho-1}{2-\rho}})$.

Finally, consider the case $\rho>2$.
By Lemma~\ref{lem:rAdaAPG-sigma} (ii) and \eqref{rAdaAPG-simgainit-bound}, remark that $\sigmainit \geq \sigma_*^{(0)} \geq \bar\sigma$ holds.
Then, Theorem~\ref{thm:rAdaAPG} (ii) implies $N \leq N(\ep,\sigma_*,C)$ with $C=O\left(\sqrt{\frac{1}{\sigmainit}} + \sqrt{\frac{1}{\bar\sigma}}\right)=O\left(\sqrt{\frac{1}{\bar\sigma}}\right)$.
Therefore, \eqref{cor:rAdaAPG:N0} applying $\bar\sigma = \Omega(\kappa^{\frac{1}{\rho-1}}\ep^{\frac{\rho-2}{\rho-1}})$ concludes the bound \eqref{rAdaAPG-compl-O} in the case $\rho>2$.
\end{proof}


\section{Conclusions}\label{sec:conclusion}

In this paper, we proposed two adaptive proximal gradient methods, Algorithms~\ref{alg:AdaAPG} and~\ref{alg:rAdaAPG}, for composite convex optimization problems.
These two algorithms are nearly optimal for the class of problems with smooth convex objective functions and for the class with an additional assumption, the H\"olderian error bound condition, respectively.
Without this additional assumption, it is unclear whether the latter algorithm also provides the near optimality.

An important question would be the establishment of optimal methods.
Remark that, under the gradient norm measure, a first-order method with optimal complexity for smooth convex functions was recently announced in \cite{Nes20}, namely, the complexity bound \eqref{low-compl-smooth} is tight (if $\dim \E$ is large enough).
Similarly, it is an important question whether we can improve the complexity \eqref{rAdaAPG-compl-O} to attain the lower bounds \eqref{low-compl-grad-HEB}.

The key idea of this work is the adaptive determination of the regularization parameter used to define the regularized objective function.
As proved in Theorem~\ref{thm:AdaAPG}, our method (Algorithm~\ref{alg:AdaAPG}) adapts the unknown and ideal regularization parameter defined in \eqref{sigma-th}.
This feature is also critical for the development of the restart scheme (Algorithm~\ref{alg:rAdaAPG}) to adapt the H\"olderian error bound condition. Basically, this adaption is obtained thanks to the relation between the ideal regularization parameter (in other words, the distance to the solution set) and the ``problem structure'' (cf. Lemma~\ref{lem:rAdaAPG-sigma}~(ii)).
This might suggest the possibility of dealing with this adaptive regularization approach under other kinds of problem structures.


{\small
\section*{Acknowledgements}
This work was partially supported by the Grant-in-Aid for Young Scientists (B) (17K12645) and the Grant-in-Aid for Scientific Research (C) (18K11178) from Japan Society for the Promotion of Science.
}



\end{document}